\newcommand{\blank}{\phantom{2}}
\newtheorem{theorem}{Theorem}[section]
\newaliascnt{lemma}{theorem}
\newtheorem{lemma}[lemma]{Lemma}
\newaliascnt{corollary}{theorem}
\newtheorem{corollary}[corollary]{Corollary}
\newaliascnt{proposition}{theorem}
\newtheorem{proposition}[proposition]{Proposition}
\newaliascnt{potato}{theorem}
\newaliascnt{definitionlemma}{theorem}
\newtheorem{definitionlemma}[definitionlemma]{Definition-Lemma}
\newaliascnt{conjecture}{theorem}
\newaliascnt{question}{theorem}
\theoremstyle{definition}
\newaliascnt{definition}{theorem}
\newtheorem{definition}[definition]{Definition}
\newaliascnt{remark}{theorem}
\newtheorem{remark}[remark]{Remark}
\newaliascnt{example}{theorem}
\newenvironment{example}
  {\pushQED{\qed}\examplex}
  {\popQED\endexamplex}
\newaliascnt{notation}{theorem}
\definecolor{darkblue}{rgb}{0.6,0,0.1}
\newcommand{\newword}[1]{\textcolor{darkblue}{\textbf{\emph{#1}}}}
\tikzset{>=stealth',
  head/.style = {fill = white, text=black},
  plaque/.style = {draw, rectangle, minimum size = 10mm}, 
  pil/.style={->,thick},
  junct/.style = {draw,circle,inner sep=0.5pt,outer sep=0pt, fill=black}
  }
\newcommand{\Z}{\mathbb{Z}}
\newcommand{\Q}{\mathbb{Q}}
\newcommand{\C}{\mathbb{C}}
\newcommand{\bP}{\mathbb{P}}
\newcommand{\bA}{\mathbb{A}}
\newcommand{\cO}{\mathcal{O}}
\newcommand{\cB}{\mathcal{B}}
\newcommand{\cC}{\mathcal{C}}
\newcommand{\cI}{\mathcal{I}}
\newcommand{\cP}{\mathcal{P}}
\newcommand{\cR}{\mathcal{R}}
\newcommand{\cS}{\mathcal{S}}
\newcommand{\cT}{\mathcal{T}}
\newcommand{\y}{\mathbf{y}}
\newcommand{\MK}{\overline{M}}
\DeclareMathOperator{\ch}{ch}
\DeclareMathOperator{\QSym}{QSym}
\DeclareMathOperator{\Sym}{Sym}
\DeclareMathOperator{\NSym}{NSym}
\DeclareMathOperator{\Hom}{Hom}
\DeclareMathOperator{\im}{im}
\DeclareMathOperator{\st}{st}
\DeclareMathOperator{\des}{sst}
\DeclareMathOperator{\barred}{{\sf Bar}}
\newcommand{\mQSym}{\mathfrak{m}\!\QSym}
\newcommand{\sQSym}{{\mathcal QSym}}
\newcommand{\mQSymQ}{\mQSym\widehat{\otimes}_\Z\,\Q}
\newcommand{\flags}{\mathsf{Flags}}
\newif\ifhascomments \hascommentstrue
  \newcommand{\matt}[1]{{\color{red}[[\ensuremath{\spadesuit\spadesuit\spadesuit} #1]]}}
  \newcommand{\oliver}[1]{{\color{blue}[[\ensuremath{\clubsuit\clubsuit\clubsuit} #1]]}}
  \newcommand{\oliver}[1]{}
  \newcommand{\matt}[1]{}
\begin{document}

\title{Quasisymmetric Schubert calculus}

\date{\today}
\keywords{Quasisymmetric functions, Schubert calculus, James reduced product}

\makeatletter
\@namedef{subjclassname@2020}{%
  \textup{2020} Mathematics Subject Classification}
\makeatother

\subjclass[2020]{05E05, 05E14, 14M15, 14N15}

\author{Oliver Pechenik}
\thanks{OP was partially supported by a Discovery Grant (RGPIN-2021-02391) and Launch Supplement (DGECR-2021-00010) from the Natural Sciences and Engineering Research Council of Canada.}
\address[OP]{Department of Combinatorics \& Optimization, University of Waterloo, Waterloo ON N2L3G1, Canada}
\email{oliver.pechenik@uwaterloo.ca}

\author{Matthew~Satriano}
\thanks{MS was partially supported by a Discovery Grant from the
  National Science and Engineering Research Council of Canada and a Mathematics Faculty Research Chair from the University of Waterloo.}
\address[MS]{Department of Pure Mathematics, University
  of Waterloo, Waterloo ON N2L3G1, Canada}
\email{msatrian@uwaterloo.ca}

\begin{abstract}
The ring of symmetric functions occupies a central place in algebraic combinatorics, with a particularly notable role in the Schubert calculus of Grassmannians, where the standard cell decompositions of Grassmannians yield the celebrated family of Schur functions and the ring structure of cohomology is governed by Littlewood--Richardson rules. The past 50 years have seen an analogous development of \emph{quasisymmetric function theory}, with applications to enumerative combinatorics, Hopf algebras, graph theory, representation theory, and other areas. Despite such successes, this theory has lacked a quasisymmetric analogue of Schubert calculus. In particular, there has been much interest, since work of Lam and Pylyavskyy (2007), in developing ``$K$-theoretic'' analogues of quasisymmetric function theory, for which a major obstacle has been the lack of any topological interpretations.

Here, building on work of Baker and Richter (2008), we develop a quasisymmetric Schubert calculus, applying the philosophy of Schubert calculus to the loop space $\Omega(\Sigma(\C\bP^\infty))$ through the homotopy model given by James reduced product $J(\C\bP^\infty)$.  We describe a canonical Schubert cell decomposition of $J(\C\bP^\infty)$, yielding a canonical basis of its cohomology, which we explicitly identify with monomial quasisymmetric functions. Our constructions apply equally to James reduced products of generalized flag varieties $G/P$, and we show how Littlewood--Richardson rules for any $G/P$ lift to $H^*(J(G/P))$, giving a vast extension of quasisymmetric function theory.

If $J(\C\bP^\infty)$ carried the structure of a normal projective algebraic variety, the structure sheaves of the cell closures would yield a ``cellular $K$-theory'' Schubert basis. We show this is impossible. Nonetheless, we introduce and study a more subtle $K$-theory Schubert basis for $K(J(\C\bP^\infty))\,\widehat{\otimes}_\Z\,\Q$. We characterize this $K$-theory ring and develop quasisymmetric representatives with an explicit combinatorial description.
\end{abstract}

\maketitle

\numberwithin{theorem}{section}
\numberwithin{lemma}{section}
\numberwithin{corollary}{section}
\numberwithin{proposition}{section}
\numberwithin{conjecture}{section}
\numberwithin{question}{section}
\numberwithin{remark}{section}
\numberwithin{definition}{section}
\numberwithin{example}{section}
\numberwithin{notation}{section}
\numberwithin{equation}{section}

\section{Introduction}
\label{sec:intro}

Traditionally, \emph{Schubert calculus} studies the cohomology rings of \newword{generalized flag varieties}, quotients $G/P$ of a Kac--Moody Lie group $G$ by a parabolic subgroup $P$. The key feature facilitating combinatorial analysis of $H^*(G/P)$ is the stratification of $G/P$ by \emph{Schubert varieties}, giving it the structure of a CW-complex with only even-dimensional cells. Such a cell decomposition yields a distinguished linear basis of $H^*(G/P)$, which one models through combinatorial tools, e.g.\ \emph{Schubert polynomials}. This approach yields a very precise understanding of $H^*(G/P)$. For example, when $G/P$ is a complex Grassmannian, its cohomology is governed by the combinatorics of \emph{Schur functions} and the structure coefficients of $H^*(G/P)$ are the \emph{Littlewood-Richardson coefficients}, computed in an explicit positive combinatorial fashion by the 
theory of \emph{Young tableaux} \cite{Littlewood.Richardson} or \emph{puzzles} \cite{Knutson.Tao.Woodward}.

Schur functions are examples of \emph{symmetric functions}; indeed, they are the most celebrated basis of the ring $\Sym$ of symmetric functions. Partly because of this connection, the study of $\Sym$ and its analogues is now one of the central subjects of algebraic combinatorics. The past 50 years have seen a remarkable flowering of \emph{quasisymmetric function theory}, since the introduction \cite{Stanley, Gessel} of the ring $\QSym$ of \emph{quasisymmetric functions} as a tool in enumerative combinatorics. Now, deep connections are known between the combinatorics of $\QSym$ and such diverse additional areas as Hopf algebras (e.g., \cite{Malvenuto.Reutenauer,Ehrenborg,Hoffman,Aguiar.Bergeron.Sottile}), graph theory \cite{Shareshian.Wachs}, probability  \cite{Stanley:riffle,Hersh.Hsiao}, time series \cite{Diehl.EbrahimiFard.Tapia}, Petersen varieties \cite{Nadeau.Tewari:divided}, Macdonald theory \cite{Corteel.Haglund.Mandelshtam.Mason.Williams}, and representation theory (e.g, \cite{Duchamp.Krob.Leclerc.Thibon, Tewari.vanWilligenburg, Searles:extended, Choi.Kim.Nam.Oh}). For additional background on quasisymmetic functions, see \cite{Mason} or the textbook \cite{Luoto.Mykytiuk.vanWilligenburg}.

In this paper, we apply the philosophy of Schubert calculus to the loop spaces $\Omega(\Sigma(G/P))$. (Here, $\Sigma$ and $\Omega$ denote the pointed suspension and loop space functors, where we treat the unique $0$-dimensional Schubert cell of $G/P$ as the basepoint.) Our work is inspired by a paper of A.~Baker and B.~Richter \cite{Baker.Richter} identifying $H^*(\Omega\Sigma \mathbb{CP}^\infty)$ with the ring $\QSym$. Baker and Richter use their topological approach to establish algebraic properties of $\QSym$; however, in contrast to our work, they do not identify a geometrically-natural basis, preventing them from extending the analogy with classical Schubert calculus. (Similarly, \cite{Oesinghaus} identifies $\QSym$ with the Chow ring of an Artin stack, but in a way that does not manifest a geometrically-natural basis.) Moreover, we also establish a geometrically-motivated basis from $K$-theory, which has been desired since 2007 work of T.~Lam and P.~Pylyavskyy \cite{Lam.Pylyavskyy}.

\subsection{James reduced products and quasisymmetric functions}
We first recall the basic notions of quasisymmetric function theory.
A \newword{composition} $\alpha = (\alpha_1, \dots, \alpha_k)$ is a sequence of positive integers; we write $\ell(\alpha) = k$ for the \newword{length} of $\alpha$.
Consider the power series ring $A = \mathbb{Z} \llbracket x_1, x_2, \dots \rrbracket$ in countably-many variables. A power series $f \in A$ is \newword{quasisymmetric} if, for each composition $(\alpha_1, \dots, \alpha_k)$ and each increasing integer sequence $1 \leq j_1 < j_2 < \dots < j_k$, the coefficients of 
\[
x_1^{\alpha_1} x_2^{\alpha_2} \cdots x_k^{\alpha_k} \quad \text{and} \quad x_{j_1}^{\alpha_1} x_{j_2}^{\alpha_2} \cdots x_{j_k}^{\alpha_k}
\]
in $f$ are equal. Following \cite{Lam.Pylyavskyy}, we write $\mQSym \subset A$ for the ring of quasisymmetric power series and let $\QSym$ denote the subring of quasisymmetric power series of bounded degree. 
For each composition $\alpha$, the \newword{monomial quasisymmetric function} $M_\alpha \in \QSym$ is the smallest power series containing the monomial $x_1^{\alpha_1} x_2^{\alpha_2} \cdots x_k^{\alpha_k}$; explicitly, 
\[
M_\alpha = \sum_{1 \leq j_1 < j_2 < \dots < j_k} x_{j_1}^{\alpha_1} x_{j_2}^{\alpha_2} \cdots x_{j_k}^{\alpha_k}.
\]
The monomial quasisymmetric functions are one of the most classical bases of $\QSym$ and are fundamental to its combinatorial theory (cf.\ \cite{Luoto.Mykytiuk.vanWilligenburg, Mason}).

Our approach to studying $\Omega\Sigma X$ (also followed by \cite{Baker.Richter}) is through the homotopy model given by the \newword{James reduced product} $J(X)$.
For a topological space $X$ with basepoint $e$, its James reduced product \cite{James} (see \cite[VII.2]{Whitehead} for a textbook treatment) is
\[
J(X)=\left(\coprod_{n\geq1}X^n\right)/\sim
\]
where $(x_1,\dots,x_n)\sim(x_1,\dots,x_{i-1},x_{i+1},\dots,x_n)$ if $x_i=e$.  One can think of $J(X)$ as a topologicalization of the free monoid on the points of $X$. The space $J(X)$ is homotopy equivalent to $\Omega\Sigma X$, so we may freely study $H^*(J(X))$ in place of $H^*(\Omega\Sigma X)$.

Let
\[
J_n(X)=X^n/\sim,
\]
where $(x_1,\dots,x_{i-1},e,x_{i+1},\dots,x_n)\sim(x_1,\dots,x_{i-1},x_{i+1},\dots,x_n,e)$.
Then
\[
X=J_1(X)\subseteq J_2(X)\subseteq\dots
\]
and
\[
J(X)=\bigcup_{n\geq1}J_n(X).
\]
Furthermore, if $X$ has a CW-structure with $e$ being a $0$-cell, then the quotient map
\[
q_n\colon X^n\to J_n(X)
\]
endows $J_n(X)$ with the structure of a CW-complex, thereby making $J(X)$ a CW-complex as well. 

We find that $J(\mathbb{CP}^\infty)$ behaves in many ways like a generalized flag variety. 
In Schubert calculus, one has a basis of $H^*(G/P)$ given by the cellular cohomology classes of the Schubert varieties of $G/P$. The cells of $J(\mathbb{CP}^\infty)$ constructed above are analogous to Schubert varieties and their classes as analogous to Schubert classes. In combinatorial Schubert calculus, one identifies Schubert classes with, for example, Schur functions or Schubert polynomials; for us, the monomial quasisymmetric functions $M_\alpha$ play an analogous role.

However, unlike generalized flag varieties, which are naturally smooth quasiprojective (ind)-varieties, $J(\mathbb{CP}^\infty)$ cannot be so realized. This fact, discussed in \autoref{sec:not_normal}, causes some interesting complications when we study the $K$-theory of $J(\mathbb{CP}^\infty)$.

\begin{remark}
	Another appearance of $\Sym$ in Schubert calculus is in the isomorphism 
	\[
	H^*(\flags_n) \cong \mathbb{Z}[x_1, \dots, x_n] / \Sym_n^+,
	\]
	 where $\flags_n$ denotes the \emph{complete flag variety} (see \autoref{ex:flags}) and $\Sym_n^+$ the ideal of symmetric polynomials without constant term. Replacing $\Sym$ here with $\QSym$, one obtains a very interesting quotient ring, first studied in \cite{Aval.Bergeron,Aval.Bergeron.Bergeron}, with significant relations to \emph{permutatahedral} and \emph{Petersen varieties} \cite{Nadeau.Tewari:divided,Nadeau.Tewari:forest,Nadeau.Tewari:permutahedral} and to the \emph{Temperley--Lieb algebra} \cite{Bergeron.Gagnon}. We do not yet know how to relate this work to our own. 
\end{remark}

\subsection{Cohomology}
Our main cohomological theorem is as follows.

\begin{theorem}
\label{thm:main-H}
The James reduced product $J\C\bP^\infty$ is a CW-complex whose cells $\{e_\alpha\}_\alpha$ are indexed by compositions $\alpha$.
Treating the variables in $\QSym$ as having degree $2$, we have
\[
H^*(J(\C\bP^\infty); \Z) \cong \QSym
\]
 as graded rings.  This isomorphism identifies the monomial quasisymmetric function $M_\alpha$ with the cellular cohomology class of $e_\alpha$.
\end{theorem}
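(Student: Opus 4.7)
The plan is to proceed in three stages: (i) construct a CW structure on $J\C\bP^\infty$ whose open cells are indexed by compositions; (ii) compute the cellular cohomology additively, using that all cells are even-dimensional; and (iii) identify the ring structure via pullback along the quotient maps $q_n\colon (\C\bP^\infty)^n \to J_n(\C\bP^\infty)$, whose target cohomology is a polynomial ring.

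For (i), I would start from the standard CW structure on $\C\bP^\infty$ with one cell in each even dimension, basepointed at the $0$-cell, so that $(\C\bP^\infty)^n$ carries the product CW structure with cells indexed by tuples $(k_1, \dots, k_n) \in \Z_{\geq 0}^n$ of real dimension $2(k_1 + \cdots + k_n)$. The equivalence relation defining $J_n$ pushes every zero entry to the rightmost positions, so the open cells of $(\C\bP^\infty)^n$ collapse to equivalence classes in bijection with compositions $\alpha = (\alpha_1, \dots, \alpha_r)$ of length $r \leq n$. Passing to the direct limit yields a CW structure on $J\C\bP^\infty$ with cells $\{e_\alpha\}$ indexed by all compositions and $\dim e_\alpha = 2|\alpha|$. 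For (ii), since every cell is even-dimensional the cellular cochain differentials vanish, so $H^*(J\C\bP^\infty;\Z)$ is the free graded $\Z$-module on the duals $\{e_\alpha^*\}$, with $e_\alpha^*$ in degree $2|\alpha|$.

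For (iii), the key calculation is the cellular pullback
\[ q_n^*\colon H^*(J_n\C\bP^\infty) \longrightarrow H^*((\C\bP^\infty)^n) = \Z[x_1, \dots, x_n], \]
a ring map by functoriality. Because $q_n$ is cellular and sends the product cell $(k_1,\dots,k_n)$ to the cell of $J_n$ given by its zero-deleted composition, the dual cochain formula reads
\[ q_n^*(e_\alpha^*) \;=\; \sum_{1 \leq j_1 < \cdots < j_r \leq n} x_{j_1}^{\alpha_1} \cdots x_{j_r}^{\alpha_r} \]
for $\alpha = (\alpha_1, \dots, \alpha_r)$, namely the $n$-variable truncation of $M_\alpha$. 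Vanishing of cellular differentials at both ends makes $q_n^*$ injective, and the $q_n^*$ are compatible with the inclusions $J_n \hookrightarrow J_{n+1}$ and $(\C\bP^\infty)^n \hookrightarrow (\C\bP^\infty)^{n+1}$ (the latter killing $x_{n+1}$). Taking the inverse limit embeds $H^*(J\C\bP^\infty;\Z)$ as a graded ring into $\Z\llbracket x_1, x_2, \dots\rrbracket$ with $e_\alpha^* \mapsto M_\alpha$. Since $\{M_\alpha\}$ is a $\Z$-basis of $\QSym$ and $\{e_\alpha^*\}$ is a $\Z$-basis of the source, the induced map is a graded ring isomorphism $H^*(J\C\bP^\infty;\Z) \cong \QSym$ identifying $e_\alpha^*$ with $M_\alpha$.

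I expect the main obstacle to be the careful execution of the inverse-limit argument: verifying that the injective ring maps $q_n^*$ really do assemble into an injective ring map whose image coincides with $\QSym$ (neither a proper subset nor a proper superset). The degreewise finiteness of the cell decomposition (finitely many compositions of any given total weight) ensures that each cohomology class pulls back to a well-defined power series of bounded degree, and matching the cellular basis $\{e_\alpha^*\}$ to $\{M_\alpha\}$ forces the image to equal $\QSym$. Sign-checking in the cellular cochain calculation is routine since every cell has even dimension.
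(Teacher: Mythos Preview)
Your proposal is correct and follows essentially the same approach as the paper's proof: construct the CW structure via the quotient maps $q_n$, exploit even-dimensionality to get free cohomology with basis dual to the cells, compute $q_n^*(e_\alpha^*)$ explicitly as the $n$-variable truncation of $M_\alpha$, and then pass to the inverse limit in the category of graded rings. The paper phrases the inverse-limit step slightly differently---showing directly that $q_n^*$ is an isomorphism onto $\QSym_n$ at each finite stage and then taking $\lim_n \QSym_n = \QSym$---but this is exactly the degreewise finiteness argument you outline at the end.
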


Combining \autoref{thm:main-H} with Hazewinkel's \cite{Hazewinkel} Littlewood--Richardson rule for the multiplication of monomial quasisymmetric functions $M_\alpha$, we show (\autoref{sec:flag}) how one may easily perform very concrete, geometrically-motivated calculations in $H^*(J(\C\bP^\infty)) \cong \QSym$, analogous to the use of classical Littlewood--Richardson rules for Schur functions in explicating the structure of $\Sym$ and the cohomology rings of Grassmannians. In \autoref{sec:flag}, we further extend \autoref{thm:main-H} and the accompanying multiplication rules from $J\C\bP^\infty$ to the James reduced products of other flag varieties, such as Grassmannians.

The isomorphism of $H^*(J(\C\bP^\infty))$ with $\QSym$ is due originally to \cite{Baker.Richter}; our alternative proof is arguably more explicit and avoids consideration of the ring $\NSym$ of \emph{noncommutative symmetric functions}. Our identification of geometric underpinnings for a basis for $\QSym$ and its associated combinatorics is new.

\subsection{$K$-theory}
There has been much interest (e.g., \cite{Patrias,Monical,Pechenik.Searles,Lewis.Marberg,Monical.Pechenik.Searles}), since pioneering work of T.~Lam and P.~Pylyavskyy \cite{Lam.Pylyavskyy}, in developing ``$K$-theoretic'' analogues of quasisymmetric function theory. A major obstacle has been the lack of any topological interpretations of quasisymmetric bases. With \autoref{thm:main-H} in hand, we therefore 
 turn to a $K$-theoretic analogue. 

In Schubert calculus, one traditionally (see, for example, \cite{Fulton.Lascoux,Buch,Pechenik.Yong}) associates a $K$-theory class to a Schubert variety by considering the structure sheaf of the variety (although other related choices are possible, e.g.\ \cite{Thomas.Yong,Knutson:Osaka}). Here, the structure sheaf naturally lives in the Grothendieck group $K_0(G/P)$ of coherent sheaves, but one can transfer it to the ring $K^0(G/P)$ by taking a resolution by locally-free sheaves, since $G/P$ is smooth and projective. Such an approach for $J(\C\bP^\infty)$ is not viable since, as observed in \autoref{sec:not_normal}, $J(\C\bP^\infty)$ cannot be realized as a normal quasi-projective ind-variety. (In \cite{Pechenik.Satriano:double}, we give a realization of $J(\C\bP^\infty)$ as a non-normal ind-variety, but this does not resolve the current difficulty.) 

Nonetheless, we introduce and study a more subtle $K$-theory Schubert basis $[e_\alpha]$ for $K(J(\C\bP^\infty))\,\widehat{\otimes}_\Z\,\Q$. The construction of our new ``cellular $K$-theory basis'' involves delicate combinatorics and appears to rely on special features of $\C\bP^\infty$. It would be interesting to explore whether analogous bases exist for related spaces. (We need the tensor with $\mathbb{Q}$ for technical reasons; it might additionally be possible to drop the tensor product by developing further combinatorics.) 

On the quasisymmetric function theory side, we introduce power series $\MK_\alpha \in \QSym$, indexed by compositions and deforming the monomial quasisymmetric functions $M_\alpha$. These new power series, which we call \newword{quasisymmetric monomial glides}, are given by an explicit combinatorial formula (see \autoref{sec:glides-via-Mobius}) and are closely related to the \emph{monomial} and \emph{fundamental slides} of \cite{Assaf.Searles}, as well as to the \emph{fundamental glides} of \cite{Pechenik.Searles} (see \cite{Searles,Pechenik.Searles:survey,Monical.Pechenik.Searles} for related discussion). In \autoref{thm:K-classesAgreewithGlidePolys}, we show that the set of quasisymmetric monomial glides 
$\MK_\alpha$ forms a Schauder basis for the ring of quasisymmetric power series with rational coefficients. Moreover, we establish the following $K$-theoretic analogue of \autoref{thm:main-H}, tying quasisymmetric monomial glides to the $K$-theoretic Schubert calculus of $J(\C\bP^\infty)$.

\begin{theorem}
\label{thm:main-K}
We have an isomorphism
\[
K(J(\C\bP^\infty))\,\widehat{\otimes}_\Z\,\Q\ \cong\ \mQSymQ.
\] This isomorphism identifies the $K$-class $[e_\alpha]$ with the quasisymmetric monomial glide $\MK_\alpha$. 
\end{theorem}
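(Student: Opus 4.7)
The plan is to apply the Chern character rationally, converting the $K$-theoretic statement into a completed version of \autoref{thm:main-H}. For each $n, m \geq 1$, the finite-type piece $J_n(\bP^m)$ is a CW-complex with only even-dimensional cells (the same Schubert-cell analysis underlying \autoref{thm:main-H} applies verbatim to these truncations). For such a space the Atiyah--Hirzebruch spectral sequence collapses and the Chern character is a ring isomorphism
\[
\ch\colon K(J_n(\bP^m))_\Q \xrightarrow{\;\sim\;} \prod_i H^{2i}(J_n(\bP^m); \Q).
\]
By definition, $K(J(\C\bP^\infty))\,\widehat{\otimes}_\Z\,\Q = \varprojlim_{n,m} K(J_n(\bP^m))_\Q$, while by \autoref{thm:main-H} applied to each stage the inverse limit of the right-hand sides is precisely $\mQSymQ$. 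Passing the Chern character through the inverse limit (which is well behaved since each rational degree is finite-dimensional at every finite stage) yields the claimed isomorphism.

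It remains to identify the class $[e_\alpha]$ with $\MK_\alpha$ under this isomorphism. Since the Chern character normalization and \autoref{def:K-class-ealpha-n-m} are chosen so that the lowest-degree component of $\ch([e_\alpha])$ is the cellular cohomology class of $e_\alpha$, \autoref{thm:main-H} shows this leading term is $M_\alpha$. By \autoref{thm:K-classesAgreewithGlidePolys} the quasisymmetric monomial glides $\MK_\alpha$ form a Schauder basis of $\mQSymQ$, and each $\MK_\alpha$ also has leading term $M_\alpha$. Hence the main task reduces to showing that the \emph{higher}-degree Chern character components of $[e_\alpha]$ reproduce exactly the higher-degree terms of $\MK_\alpha$ predicted by the combinatorial glide rule.

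The crux, and the main obstacle, is this explicit Chern character computation. The plan is to pull it back through the quotient map $q_n\colon (\bP^m)^n \to J_n(\bP^m)$, on whose source the Chern character is transparent: it is determined by $\ch(\cO(1))$ in each $\bP^m$ factor, and monomial classes there are manipulated trivially. Using the description of $[e_\alpha]$ from \autoref{def:K-class-ealpha-n-m} and the projection formula, the calculation collapses to a purely combinatorial identity about how monomials in $H^*((\bP^m)^n; \Q)$ descend to $H^*(J_n(\bP^m); \Q)$ under the collation of consecutive equal coordinates. I expect this descent, upon careful bookkeeping of signs arising from $\ch$, to match term-for-term the M\"obius-theoretic construction of $\MK_\alpha$ carried out in \autoref{sec:glides-via-Mobius}; verifying this identity is where the real work lies, and it is plausibly cleanest to do it first in $J_n(\bP^m)$ for bounded $n$ and $m$ and then let $n, m \to \infty$, stability of both sides being immediate from the limit construction.
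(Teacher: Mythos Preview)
There is a genuine gap. The isomorphism you construct via the Chern character is \emph{not} the isomorphism under which $[e_\alpha]$ corresponds to $\MK_\alpha$, so the identity you plan to verify in your final paragraph is simply false.

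Concretely: the paper's isomorphism $\psi$ (from \autoref{prop:KJmCPn-is-qsym-in-y}) is built by pulling back along $q_n$ to $K((\bP^m)^n)_\Q \cong \Q[y_1,\dots,y_n]/(y_i^{m+1})$, where $y_i = 1 - [\pi_i^*\cO(-1)]$. Under the Chern character one has $\ch(y_i) = 1 - e^{-x_i}$, so your isomorphism and the paper's differ by the substitution $y_i \leftrightarrow 1-e^{-x_i}$. In particular, under your Chern-character isomorphism the class $[e_\alpha]$ is sent to $\MK_\alpha(1-e^{-x_1},1-e^{-x_2},\dots)$, not to $\MK_\alpha(x_1,x_2,\dots)$. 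Already for $\alpha=(1)$ these disagree: $\MK_{(1)}(1-e^{-x}) = \sum_i(x_i - x_i^2/2 + \cdots) \neq \sum_i x_i = \MK_{(1)}(x)$. So the ``purely combinatorial identity'' you anticipate matching the M\"obius description of $\MK_\alpha$ will not hold, and no amount of bookkeeping of Chern-character signs will repair it.

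The paper's route is different in exactly this respect. It never tries to compute $\ch([e_\alpha])$. Instead it works entirely in the $y$-variables on $(\bP^m)^n$: since $Z_{\alpha,n,m}$ is a union of Schubert varieties whose Chow class is multiplicity-free, Knutson's M\"obius formula \cite[Theorem~3]{Allen} gives $[\cO_{Z_{\alpha,n,m}}]$ directly as the M\"obius sum $\sum_{\sigma\in\cP_\alpha}\mu_{\cP_\alpha}(\sigma)\,\y^\sigma = \MK_\alpha(y_1,\dots,y_n)$ (using \autoref{l:KclassProdPn} to identify $[\cO_{\bP^{m-\sigma_1}\times\cdots\times\bP^{m-\sigma_n}}]$ with $\y^\sigma$). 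The quasisymmetry of $\MK_\alpha$ proved in \autoref{thm:two-polys-agree} then shows this class lies in the image of $q_n^*$, defining $[e_\alpha]_{n,m}$ and establishing $\psi([e_\alpha])=\MK_\alpha$. The Chern character appears only as an auxiliary tool inside the proof of \autoref{prop:KJmCPn-is-qsym-in-y}, to check that the image of $q_n^*$ on $K$-theory is the quasisymmetric subring in the $y$-variables.

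A secondary issue: you invoke \autoref{thm:K-classesAgreewithGlidePolys} for the Schauder-basis property, but that theorem is precisely the detailed form of the result you are proving. The Schauder-basis statement itself is harmless (it only uses that $\MK_\alpha = M_\alpha + (\text{higher degree})$), but citing it here obscures the logical structure.
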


\autoref{thm:main-K} follows from the more precise version established in \autoref{thm:K-classesAgreewithGlidePolys}.

Note that the ring $\mQSymQ$ is nothing more than quasisymmetric power series with rational coefficients.  
The ring $\mQSym$ has been previously studied, for example by \cite{Lam.Pylyavskyy,Patrias,Lewis.Marberg}, in relation to combinatorial constructions considered to be ``$K$-theoretic''; to our knowledge, however, $\mQSym$ has never before been explicitly identified with a $K$-theory ring, nor have nonsymmetric elements of $\mQSym$ been previously identified with $K$-theory classes.

A slightly different ``$K$-theoretic'' analogue of monomial quasisymmetric functions was proposed briefly in \cite[Remark~5.14]{Lam.Pylyavskyy} by T.~Lam and P.~Pylyavskyy; these were inspired by certain other inhomogeneous power series related to the $K$-theory of Grassmannians, however, no explicit connection with $K$-theoretic geometry was obtained. We had initially imagined that the isomorphism of \autoref{thm:main-K} would identify $[e_\alpha]$ with those Lam--Pylyavskyy \emph{multimonomial quasisymmetric functions}, and it is interesting to see that this is not the case. See \autoref{rem:LP} for a more precise comparison.

In addition to giving geometric insight into the combinatorics of $\QSym$ and $\mQSym$, we hope our new combinatorial models for the cohomology and $K$-theory of the loop space $\Omega\Sigma\C\bP^\infty$ give some insight towards the geometric content of \emph{elliptic cohomology}. Elliptic cohomology is a complex oriented cohomology theory that arises from formal algebraic considerations. A geometric perspective on what is measured by elliptic cohomology would be expected to give many important insights, but is impeded by the difficulty of performing explicit calculations even for simple spaces. See, for example, \cite{Lenart.Zainoulline,Rimanyi.Weber,Kumar.Rimanyi.Weber} for work developing the beginnings of a theory of elliptic Schubert calculus and \cite{BerwickEvans.Tripathy} for geometric models of elliptic cohomology. 
Since the elliptic cohomology of a space $X$ is approximated by the $K$-theory of the loop space $\Omega X$, it is useful to have spaces with a concrete understanding of $K^*(\Omega X)$. Our new combinatorics provides such an understanding for the case $X = \Sigma \C\bP^\infty$. 

To connect with the topology of \autoref{thm:main-K}, we find it useful to define the quasisymmetric monomial glides via a cancellative formula involving the \emph{M\"obius function} on a certain poset. However, to calculate explicitly with quasisymmetric monomial glides it is preferable to have an explicit non-cancellative combinatorial formula. Indeed, we end up needing such a formula to see that quasisymmetric monomial glides are in fact quasisymmetric! In \autoref{sec:glides-via-Mobius}, we establish such a formula by constructing a somewhat complicated sign-reversing involution.  Our formula, which may be of independent interest, uses ``sliding'' combinatorics similar to that appearing in \cite{Assaf.Searles, Pechenik.Searles}.  The main combinatorial challenge, compared to many works (e.g., \cite{Kreweras,Sagan.Vatter,McNamara.Sagan}) establishing non-cancellative formulas for M\"obius functions of other posets, is that the posets we are forced to consider are generally not \emph{graded}, which complicates the analysis.

\medskip
\noindent {\bf This paper is organized as follows.} In \autoref{sec:cohomology}, we establish \autoref{thm:main-H}, matching the cellular basis of $H^*(J(\C\bP^\infty))$ with the combinatorics of monomial quasisymmetric functions. In \autoref{sec:glides-via-Mobius}, we introduce quasisymmetric monomial glides and prove a combinatorial formula for them. In \autoref{sec:K-thy-james-reduced-prod}, we construct a ``cellular'' $K$-theory basis of $K(J(\C\bP^\infty))\,\widehat{\otimes}_\Z\,\Q$ and connect it to the combinatorics of $\mQSym$ and quasisymmetric monomial glides, establishing \autoref{thm:main-K}. The brief \autoref{sec:not_normal} explains the geometric obstacles to simplifying the construction of \autoref{sec:K-thy-james-reduced-prod}. Finally, in \autoref{sec:flag}, we sketch the extension of \autoref{thm:main-H} to James reduced products of a general class of CW complexes, with special emphasis on finite-dimensional generalized flag varieties and the classifying spaces $BU(k)$. In particular, \autoref{thm:BUk} gives an explicit positive combinatorial rule for the cellular structure coefficients of $H^*(J(BU(k)))$.

\section{Quasisymmetric functions from the cells of the James reduced product}\label{sec:cohomology}

We begin by fixing notation that will be used throughout the paper. A \newword{weak composition} is a finite sequence of nonnegative integers; recall from the introduction that a \emph{composition} is a finite sequence of positive integers. Given a weak composition $\alpha$, its \newword{positive part} $\alpha^+$ is the composition obtained from $\alpha$ by deleting all $0$ terms. For example, the positive part of $(2,0,4,0,0,2)$ is $(2,4,2)$. As a shorthand, for positive integers $n < n'$, we write $[n] = \{ 1, 2, \dots, n\}$ and $[n,n'] = \{n, n+1, \dots, n'\}$.

We denote by $e_0,e_2,e_4,\dots$ the cells of $\C\bP^\infty$ with $\dim e_i = i$. 
By definition, the CW complex structure on $J_n\C\bP^\infty$ is induced from the cellular quotient map
\[
q_n\colon (\C\bP^\infty)^n\to J_n\C\bP^\infty.
\]
By construction, $q_n$ identifies the cells $e_{2b_1}\times\dots\times e_{2b_n}$ and $e_{2c_1}\times\dots\times e_{2c_n}$ of $(\C\bP^\infty)^n$ if and only if $(b_1,\dots,b_n)^+ = (c_1,\dots,c_n)^+$. 

We see then that the cells of $J_n\C\bP^\infty$ are indexed by compositions $\alpha=(\alpha_1,\dots,\alpha_k)$ with length $k\leq n$. It follows that the cells of $J\C\bP^\infty$ are indexed by the set of all compositions. For any composition $\alpha$, we let
\[
e_\alpha\subset J\C\bP^\infty
\]
denote the corresponding cell. Since all cells of $J\C\bP^\infty$ are even-dimensional, we see that its integral homology is freely generated by the cells; hence, we will not distinguish between cells and their cellular homology classes:
\[
H_*(J(\C\bP^\infty); \Z)\cong\bigoplus_\alpha \Z e_\alpha.
\]
Similarly, $H^d(J(\C\bP^\infty);\Z)=\Hom(H_d(J(\C\bP^\infty);\Z),\Z)$, so
\[
H^*(J(\C\bP^\infty); \Z)\cong\bigoplus_\alpha \Z x_\alpha,
\]
where $x_\alpha$ denotes the function dual to $e_\alpha$. In particular, we see from this description of the cells of $J_n(\C\bP^\infty)$ and $J(\C\bP^\infty)$ that if 
\[
\iota_n\colon J_n(\C\bP^\infty)\to J(\C\bP^\infty)
\]
denotes the inclusion, then 
\[
\iota_n^*\colon H^{2d}(J(\C\bP^\infty); \Z)\to H^{2d}(J_n(\C\bP^\infty);\Z)
\]
is an isomorphism for all $n\geq d$. As a result
\begin{equation}\label{eqn:H-is-inverse-limit-James-reduced-product}
H^*(J(\C\bP^\infty); \Z)=\lim_{n \to \infty} H^*(J_n(\C\bP^\infty); \Z)
\end{equation}

At this point, we have established the first sentence of \autoref{thm:main-H}; the remainder is immediately implied by \autoref{james-reduced-cells->monomial-basis} below. Recall that $\QSym$ denotes the graded ring of quasisymmetric functions in variables $x_1,x_2,\dots$, where each $x_i$ has degree $2$.

\begin{theorem}
\label{james-reduced-cells->monomial-basis}
We have an isomorphism of graded rings
\[
H^*(J(\C\bP^\infty); \Z) \cong \QSym
\]
sending $x_\alpha$ to the quasisymmetric monomial function $M_\alpha$.
\end{theorem}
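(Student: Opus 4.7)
The plan is to establish the $\Z$-module bijection $H^*(J\C\bP^\infty;\Z) \to \QSym$ sending $x_\alpha \mapsto M_\alpha$ as a ring map. I would do this by pulling back through the cellular quotients $q_n \colon (\C\bP^\infty)^n \to J_n\C\bP^\infty$ into the transparent polynomial rings $H^*((\C\bP^\infty)^n;\Z) = \Z[t_1,\dots,t_n]$, where $t_i$ denotes the degree-$2$ generator coming from the $i$-th factor.

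First, I would analyze $q_n$ at the cellular level. Since $q_n$ is a cellular map that restricts to a homeomorphism from the interior of each product cell $e_{2b_1}\times\cdots\times e_{2b_n}$ onto the interior of the image cell $e_{(b_1,\dots,b_n)^+}$ (the factors with $b_i = 0$ contribute only the basepoint, so collapsing them is a homeomorphism on interiors), the induced map on cellular chain complexes satisfies
\[
(q_n)_*(e_{2b_1}\otimes\cdots\otimes e_{2b_n}) \;=\; e_{(b_1,\dots,b_n)^+}.
\]
In particular $(q_n)_*$ is surjective onto the free abelian group generated by the cells of $J_n\C\bP^\infty$, so its $\Z$-dual
\[
 q_n^*\colon H^*(J_n\C\bP^\infty;\Z)\longrightarrow \Z[t_1,\dots,t_n]
\]
is an injective ring homomorphism, and a direct count of preimages gives
\[
 q_n^*(x_\alpha) \;=\; \sum_{(b_1,\dots,b_n)^+ = \alpha} t_1^{b_1}\cdots t_n^{b_n} \;=\; \sum_{1\le j_1<\cdots<j_k\le n} t_{j_1}^{\alpha_1}\cdots t_{j_k}^{\alpha_k}
\]
for every composition $\alpha = (\alpha_1,\dots,\alpha_k)$ of length $k\le n$; the right-hand side is exactly $M_\alpha(t_1,\dots,t_n)$. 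Since $\{M_\alpha(t_1,\dots,t_n) : \ell(\alpha) \le n\}$ is a $\Z$-basis of the subring of quasisymmetric polynomials in $t_1,\dots,t_n$ (this subring being closed under multiplication by the definition of $\QSym$), $q_n^*$ realizes a ring isomorphism of $H^*(J_n\C\bP^\infty;\Z)$ onto that subring, sending $x_\alpha \mapsto M_\alpha(t_1,\dots,t_n)$.

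Finally, I would pass to the inverse limit. The isomorphisms above are compatible with the restriction maps coming from $J_n\C\bP^\infty \hookrightarrow J_{n+1}\C\bP^\infty$ on one side and the truncation $t_{n+1} \mapsto 0$ on the other, both of which amount to forgetting the last coordinate on cells (so $M_\alpha(t_1,\dots,t_{n+1}) \mapsto M_\alpha(t_1,\dots,t_n)$ when $\ell(\alpha)\le n$ and to $0$ otherwise). Combining~\eqref{eqn:H-is-inverse-limit-James-reduced-product} with the analogous stabilization of $\QSym$ as the inverse limit of its quasisymmetric polynomial truncations yields the ring isomorphism $H^*(J\C\bP^\infty;\Z) \cong \QSym$ sending $x_\alpha \mapsto M_\alpha$.

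The only genuinely substantive step is the cellular computation of $q_n^*$, where care is needed to identify all product cells collapsing to a given $e_\alpha$; all cells being even-dimensional spares us sign issues, and everything else is formal.
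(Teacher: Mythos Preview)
Your proposal is correct and follows essentially the same route as the paper's proof: both compute $q_n^*(x_\alpha)$ by identifying the product cells of $(\C\bP^\infty)^n$ mapping to $e_\alpha$, recognize the result as $M_\alpha(t_1,\dots,t_n)$, conclude that $q_n^*$ is an isomorphism onto $\QSym_n$, and then pass to the inverse limit via \eqref{eqn:H-is-inverse-limit-James-reduced-product}. The only cosmetic difference is that you argue injectivity of $q_n^*$ by first noting surjectivity of $(q_n)_*$ on cellular chains and dualizing, whereas the paper reads off injectivity by inspecting the coefficient of $x_1^{\alpha_1}\cdots x_k^{\alpha_k}$.
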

\begin{proof}
First, by the K\"unneth formula,
\[
H^*((\C\bP^\infty)^n; \Z)=H^*(\C\bP^\infty; \Z)^{\otimes n}\xleftarrow[\cong]{\phi_n}\Z[x_1,\dots,x_n];
\]
the isomorphism $\phi_n$ identifies $x_i$ with the dual of the cell $e_0\times\dots\times e_0\times e_2\times e_0\times\dots\times e_0$, where $e_2$ appears in the $i$th position.

We see that if $k\leq n$, then 
\[
q_n^{-1}(e_{(\alpha_1,\dots,\alpha_k)})=\coprod_{\iota} e_\iota,
\]
where 
\begin{itemize}
	\item $\iota\colon[k]\to[n]$ is injective;
	\item $e_\iota=e_{i_1}\times\dots\times e_{i_n}$;
	\item $i_j=0$ if $j$ is not in the image of $\iota$, and otherwise $i_{\iota(j)}=2\alpha_j$.
\end{itemize}
 As a result,
\[
q_n^*(x_{(\alpha_1,\dots,\alpha_k)})=\sum_{1\leq i_1<\dots<i_k\leq n}x^{\alpha_1}_{i_1}\dots x^{\alpha_k}_{i_k}=:M_{n,(\alpha_1,\dots,\alpha_k)};
\]
notice that $M_{n,(\alpha_1,\dots,\alpha_k)}$ is the quasisymmetric monomial function $M_{(\alpha_1,\dots,\alpha_k)}$, truncated to the finite set of variables $x_1,\dots,x_n$.

Let
\[
\QSym_n \subset \Z[x_1,\dots,x_n]
\]
be the subring of \newword{quasisymmetric polynomials} in $x_1,\dots,x_n$. Since $H^*(J_n(\C\bP^\infty); \Z)$ is freely generated by all $x_{(\alpha_1,\dots, \alpha_k)}$ with $k\leq n$, we see the image of $q_n^*$ is contained in $\QSym_n$. We therefore obtain a graded ring map
\begin{equation}\label{eqn:mapQSymm}
q_n^*\colon H^*(J_n(\C\bP^\infty); \Z)\longrightarrow\QSym_n.
\end{equation}
This map is surjective since $q_n^*(x_{(\alpha_1,\dots, \alpha_k)})=M_{n,(\alpha_1,\dots, \alpha_k)}$ and the elements $M_{n,(a_1,\dots,a_k)}$ with $k\leq n$ form a $\Z$-basis for $\QSym_n$. Furthermore, the map $q_n^*$ is injective since if $f\in H^*(J_n(\C\bP^\infty); \Z)$, then the coefficient of $x_{(\alpha_1,\dots, \alpha_k)}$ in the expression for $f$ is equal to the coefficient of $x_1^{\alpha_1}\dots x_k^{\alpha_k}$ in the expression for $q_n^*(f)$. Thus, \eqref{eqn:mapQSymm} is an isomorphism of graded rings.

To finish the proof, we note that for all $n$, we have a commutative diagram
\[
\xymatrix{
(\C\bP^\infty)^n\ar[r]^-{\jmath'_n}\ar[d]^-{q_n} & (\C\bP^\infty)^{n+1}\ar[d]^-{q_{n+1}}\\
J_n(\C\bP^\infty)\ar[r]^-{\jmath_n} & J_{n+1}(\C\bP^\infty)
}
\]
which induces a commutative diagram
\[
\xymatrix{
H^*(J_{n+1}(\C\bP^\infty);\Z)\ar[r]^-{q_{n+1}^*}_-{\cong}\ar[d]_-{\jmath_n^*} & \QSym_{n+1} \ar@{^{(}->}[r]\ar[d]_-{\pi_n} & \Z[x_1,\dots,x_{n+1}] \ar[d]^-{\pi'_n} & H^*((\C\bP^\infty)^{n+1})\ar[d]^-{(\jmath'_n)^*}\ar[l]^-{\cong}_-{\phi_{n+1}}\\
H^*(J_{n}(\C\bP^\infty);\Z)\ar[r]^-{q_{n}^*}_-{\cong} & \QSym_{n} \ar@{^{(}->}[r] & \Z[x_1,\dots,x_n]  & H^*((\C\bP^\infty)^{n})\ar[l]^-{\cong}_-{\phi_n}
}
\]
of graded rings; the maps $\pi_n$ and $\pi'_n$ kill $x_{n+1}$ and send $x_i$ to $x_i$ for $i\leq n$. Combining this commutative diagram with equation \eqref{eqn:H-is-inverse-limit-James-reduced-product}, we see that
\[
H^*(J(\C\bP^\infty);\Z)=\lim_n H^*(J_n(\C\bP^\infty);\Z)\cong\lim_n \QSym_n=\QSym,
\]
where the limit is taken in the category of graded rings.
\end{proof}

\begin{remark}
For the reader less familiar with limits in the category of graded rings, we give a few more details about why $\lim_n \QSym_n=\QSym$. The limit in the category of graded rings is constructed by taking the direct sum of the limits of each graded piece. Letting, $\QSym_n^d$ denote the quasisymmetric polynomials of degree $d$, we have then
\[
\lim_n \QSym_n=\bigoplus_d \lim_n\QSym_n^d.
\]
The limit $\lim_n\QSym_n^d$ is now taken in the category of all rings, and therefore consists of all quasisymmetric power series of degree $d$. As a result,
\[
\lim_n \QSym_n=\QSym.
\]
\end{remark}

\section{Quasisymmetric monomial glides}
\label{sec:glides-via-Mobius}

In this section, we first define the quasisymmetric monomial glides $\MK_\alpha$ via a \emph{cancellative} formula using a M\"obius function on a certain poset. This characterization will be useful to connect with the geometry in \autoref{sec:K-thy-james-reduced-prod}, where we establish that $\MK_\alpha$ represents the $K$-class of the cell $e_\alpha$ of $J(\C\bP^\infty)$. On the other hand, this characterization is inefficient for other uses, due to the cancellations and the complexity of the poset. Moreover, the quasisymmetry of $\MK_\alpha$ is not easily apparent from this definition. The main task of this section then is to establish a simpler non-cancellative formula for the $\MK_\alpha$, which additionally manifests their quasisymmetry. We will additionally need this quasisymmetry in \autoref{sec:K-thy-james-reduced-prod}.

Throughout this section, let
\begin{equation}\label{eqn:alpha-underbrace-expression}
\alpha=(\underbrace{a_1,\dots,a_1}_{N_1},\underbrace{a_2,\dots,a_2}_{N_2},\dots,\underbrace{a_k,\dots,a_k}_{N_k})
\end{equation}
be a composition where $a_i\neq a_{i+1}$ for $1\leq i<k$. Fix
\[
n\geq N := \sum_{i=1}^k N_i.
\]

\subsection{Definitions and basic properties}
Let $\mathcal{S}_\alpha$ be the ${n\choose N}$-element set of all length-$n$ strings obtained from $\alpha$ by inserting $0$s.
That is to say, \[
\mathcal{S}_\alpha = \{(b_1, \dots, b_n) : b \text{ is a weak composition with $b^+ = \alpha$}\},
\] 
where we are conflating tuples with strings.
Let $\cP_\alpha$ be the closure of the set $\mathcal{S}_\alpha$ under componentwise maximum, i.e., $(c_1,\dots,c_n)\in\cP_\alpha$ if there exist $(b_{i1},\dots,b_{in})\in\mathcal{S}_\alpha$ such that $c_j=\max_i b_{ij}$. We give $\cP_\alpha$ the structure of a poset by componentwise comparison, so that
\[
(b_1, \dots, b_n) \leq (b_1', \dots, b_n') \text{ if and only if $b_i \leq b_i'$ for all } i.
\]

\begin{example}\label{ex:Hasse}
	Let $\alpha = (1,3)$ and let $n = 4$. Then \[
	\mathcal{S}_\alpha = \{ 0013,0103,0130,1003,1030,1300 \},
	\] where we drop commas and parentheses in weak compositions for concision. The reader may check that, as a set, 
	\[
	\cP_\alpha = \mathcal{S}_\alpha \cup \{ 0113,1103,1013,1130, 1113,1330,0133,1033,1303,1133,1313,1333\}.
	\]
	We may visually illustrate the poset structure on $\cP_\alpha$ by the \emph{(Hasse) diagram} below:
 \[
 \begin{tikzpicture}
	\node (0013) at (0,0) {$0013$};
	\node[right= 0.9 of 0013] (0103) {$0103$};
	\node[right= 1.4 of 0103] (0130) {$0130$};
	\node[right= 0.6 of 0130] (1003) {$1003$};
	\node[right= 0.9 of 1003] (1030) {$1030$};
	\node[right= 1.0 of 1030] (1300) {$1300$};
	\node[above right= 0.9 and 0.0 of 0013] (0113) {$0113$}; 
	\node[right= 0.7  of 0113] (1103) {$1103$};
	\node[right= 0.9 of 1103] (1013) {$1013$};
	\node[right = 2.6 of 1013] (1130) {$1130$}; 
	\node[above right = 0.9 and 0.1 of 0113] (1113) {$1113$}; 
	\node[above left = 0.9 and 0.9 of 1113] (1330) {$1330$}; 
	\node[right = 0.9 of 1330] (0133) {$0133$}; 
	\node[right = 1.1 of 0133] (1033) {$1033$}; 
	\node[right = 1.9 of 1033] (1303) {$1303$}; 
	\node[above right = 0.9 and 1.9 of 1330] (1133) {$1133$}; 
	\node[right = 1.0 of 1133] (1313) {$1313$}; 
	\node[above right = 0.9 and 0.2 of 1133] (1333) {$1333$}; 
\draw (0013) to (0113);
\draw (0013) to (1013);
\draw (0103) to (0113);
\draw (0103) to (1103);
\draw (0130) to (1130);
\draw (0130) to (0133);
\draw (1003) to (1103);
\draw (1003) to (1013);
\draw (1030) to (1130);
\draw (1030) to (1033);
\draw (1300) to (1330);
\draw (1300) to (1303);
\draw (0113) to (0133);
\draw (0113) to (1113);
\draw (1103) to (1113);
\draw (1103) to (1303);
\draw (1013) to (1113);
\draw (1013) to (1033);
\draw (1130) to (1330);
\draw (1130) to (1133);
\draw (1113) to (1133);
\draw (1113) to (1313);
\draw (1330) to (1333);
\draw (0133) to (1133);
\draw (1033) to (1133);
\draw (1303) to (1313);
\draw (1133) to (1333);
\draw (1313) to (1333);
\end{tikzpicture}.
 \]
	Here, $b \leq b'$ if and only if one can reach $b'$ from $b$ via a sequence of ascending edges.
\end{example}

Let $\widehat{\cP}_\alpha$ be the poset obtained from $\cP_\alpha$ by adjoining a minimum element $\widehat{0}$ satisfying $\widehat{0} < p$ for all $p \in \cP_\alpha$. 

For a poset $\mathcal{P}$ and elements $p,q \in \mathcal{P}$, a \newword{lower bound} of $p$ and $q$ is any element $r \in \mathcal{P}$ such that $r \leq p$ and $r \leq q$. Similarly, an \newword{upper bound} is any element $s \in \mathcal{P}$ with $s \geq p$ and $s \geq q$. A 
\newword{meet} of $p$ and $q$ is an element $p \wedge q \in \mathcal{P}$ such that 
\begin{itemize}
	\item $p \wedge q$ is a lower bound of $p$ and $q$, and
	\item $p \wedge q \geq r$, for any lower bound $r$ of $p$ and $q$.
\end{itemize} Similarly, a \newword{join} of $p$ and $q$ is an element $p \vee q \in \mathcal{P}$ that is an upper bound of $p$ and $q$ and is less than any other such upper bound. Clearly, meets and joins are unique if they exist; indeed, treating $\mathcal{P}$ as a category in the standard way, meet and join coincide with categorical product and coproduct.

\begin{example}\label{ex:meets_and_joins}
In the poset $\cP_{13}$ depicted in \autoref{ex:Hasse}, we have for example that $0103 \vee 0130 = 0133$ and that $1103 \wedge 1013 = 1003$. On the other hand, the meet $1013 \wedge 1130$ does not exist, as no element of the poset lies below both $1013$ and $1130$.
\end{example}

A \newword{lattice} is a poset such that every pair of elements has both a meet and a join. By \autoref{ex:meets_and_joins}, the poset $\cP_{13}$ of \autoref{ex:Hasse} is not a lattice as some meets do not exist. On the other hand, we have the following useful observation.

\begin{lemma}\label{lem:lattice}
	The poset $\widehat{\cP}_\alpha$ is a lattice.
\end{lemma}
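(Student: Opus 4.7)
The plan is to exploit the fact that $\cP_\alpha$ is, by construction, closed under componentwise maximum. Consequently, for any two elements $p, q \in \cP_\alpha$, their componentwise maximum lies in $\cP_\alpha$, and one immediately checks that this componentwise maximum is the join $p \vee q$ in the poset $\cP_\alpha$, hence also in $\widehat{\cP}_\alpha$ (since adjoining a global minimum $\widehat{0}$ cannot affect upper bounds). Furthermore, $\widehat{0} \vee p = p$ for every $p \in \cP_\alpha$, so every pair of elements of $\widehat{\cP}_\alpha$ admits a join.

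Having established the existence of all joins, I would next invoke (or briefly re-derive) the standard order-theoretic fact that a finite poset $Q$ with a minimum element in which every pair has a join is automatically a lattice. The argument is short: for $p, q \in Q$, the set
\[
L = \{ r \in Q : r \leq p \text{ and } r \leq q \}
\]
is non-empty (containing the minimum) and finite, so one may form $m = \bigvee L$ by successively joining its elements. Since $p$ is an upper bound of $L$, we have $m \leq p$, and similarly $m \leq q$; hence $m \in L$, and $m$ is manifestly the greatest lower bound of $p$ and $q$. Applying this to $\widehat{\cP}_\alpha$, which is finite (since $\cP_\alpha \subseteq \{0, a_1, \ldots, a_k\}^n$) and has minimum $\widehat{0}$, yields the claim.

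The only real subtlety worth flagging — and the reason $\widehat{0}$ must be adjoined — is the phenomenon illustrated in \autoref{ex:meets_and_joins}: two elements of $\cP_\alpha$ may fail to have any common lower bound within $\cP_\alpha$ itself, since the componentwise minimum of two elements of $\cP_\alpha$ need not belong to $\cP_\alpha$. The role of $\widehat{0}$ is precisely to guarantee that the set $L$ above is always non-empty, after which the closure of $\cP_\alpha$ under componentwise maximum does all the work.
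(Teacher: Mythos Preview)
Your proof is correct and follows essentially the same approach as the paper: show that $\widehat{\cP}_\alpha$ has all joins (via componentwise maximum for elements of $\cP_\alpha$, and trivially when $\widehat{0}$ is involved), then conclude that a finite poset with a minimum element and all joins is a lattice. The only difference is that the paper cites \cite[Proposition~3.3.1]{Stanley:EC1} for this last fact, whereas you supply the short direct argument.
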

\begin{proof}
	Consider any two elements $p,q \in \widehat{\cP}_\alpha$. If $p = \widehat{0}$, then $p \vee q = q$, while if $q = \widehat{0}$, then $p \vee q = p$. Otherwise, we have $p,q \in \cP_\alpha$, so set $p = (p_1, \dots, p_n)$ and $q = (q_1, \dots, q_n)$. By construction, they have a join $s = (s_1, \dots, s_n) \in \widehat{\cP}_\alpha$ given by $s_i = \max \{ p_i, q_i \}$. Thus, $\widehat{\cP}_\alpha$ has all joins.
	
	Since $\widehat{\cP}_\alpha$ is a finite poset with all joins and with a minimum element $\widehat{0}$, it is a lattice by \cite[Proposition~3.3.1]{Stanley:EC1}.
\end{proof}

Consider the alphabet 
\[
\mathbb{B} = \{0,1,2,3, \dots\} \cup \{ \bar{1}, \bar{2}, \bar{3}, \dots \}.
\]
We refer to elements of the first set in $\mathbb{B}$ as \newword{unbarred} and elements of the second as \newword{barred}.
We introduce some local moves that may be performed on strings in the alphabet $\mathbb{B}$:
\begin{itemize}
	\item[(M.1)] \quad $\ldots 0p \ldots \quad \Longrightarrow \quad  \ldots p0 \ldots$, \quad for $p \in \Z_{>0}$;
	\item[(M.2)] \quad $\ldots 0p \ldots \quad \Longrightarrow \quad \ldots p\bar{p} \ldots$, \quad for $p \in \Z_{>0}$.
\end{itemize}
That is, if a string $\sigma$ contains the symbol $0$ followed by an unbarred symbol $p$, we may either swap them by (M.1) or else replace them with the substring $p\bar{p}$ by (M.2).
Let $\widetilde{\cC}_\alpha$ denote the set of strings that can be obtained from elements of $\mathcal{S}_\alpha$ (thought of as strings in $\Z_{\geq 0}$) by iterated application of the local moves (M.1) and (M.2); here, $\cC$ stands for ``combinatorial'' since this set will ultimately be used to construct the aforementioned non-cancellative formula for the quasisymmetric monomial glide $\MK_\alpha$. Note that $\mathcal{S}_\alpha$ is the set of strings obtainable by iterated application of just the local move (M.1), starting from the string $(0^{n-N})^\frown \alpha$, where $0^{n-N}$ denotes a string of $n-N$ zeros and ${}^\frown$ represents concatenation of strings.

Let $\cC_\alpha$ be the set of strings obtained from $\widetilde{\cC}_\alpha$ by replacing each barred symbol with the corresponding unbarred symbol. Note that multiple elements of $\widetilde{\cC}_\alpha$ may yield the same element of $\cC_\alpha$. Equivalently, $\cC_\alpha$ is the set of strings obtainable from elements of $\mathcal{S}_\alpha$ by iterated application of (M.1) and the local move 
\begin{itemize}
	\item[(M.$2'$)] \quad $\ldots 0p \ldots \quad \Longrightarrow \quad \ldots pp \ldots$, \quad for $p \in \Z_{>0}$.
\end{itemize}

\begin{example}\label{ex:CandCtwiddle}
Continuing with $\alpha = (1,3)$ and $n=4$ from \autoref{ex:Hasse}, we have 
\[
\widetilde{\cC}_\alpha = \mathcal{S}_\alpha \cup \{ 01\bar{1}3,1\bar{1}03,10\bar{1}3,1\bar{1}30, 1\bar{1}\bar{1}3,13\bar{3}0,013\bar{3},103\bar{3},130\bar{3},1\bar{1}3\bar{3},13\bar{3}\bar{3}\},
\]
while 
\[
\cC_\alpha = \mathcal{S}_\alpha \cup \{ 0113,1103,1013,1130, 1113,1330,0133,1033,1303,1133,1333\},
\] so that $|\cC_\alpha| = |\widetilde{\cC}_\alpha|$ in this case. Comparing with \autoref{ex:Hasse}, we see that $\cP_\alpha = \cC_\alpha \cup \{1313 \}$.

On the other hand, for $\beta = (1,1)$ and $n=3$, we have 
\[
\mathcal{S}_\beta = \{011,101,110\} \text{ and } \widetilde{\cC}_\beta = \mathcal{S}_\beta \cup \{1 \bar{1} 1, 1 1 \bar{1}\},
\]
while \[
\cC_\beta = \mathcal{S}_\beta \cup \{ 111 \} = \cP_\beta.
\]  In this case, $\cC_\beta$ is strictly smaller than $\widetilde{\cC}_\beta$.
\end{example}

For $\widetilde{s}\in\widetilde{\cC}_\alpha$, let $\barred(\widetilde{s})$ denote the number of barred symbols in $\widetilde{s}$. 

\begin{lemma}\label{lem:subset}
	Viewing $\cP_\alpha$ as a set (forgetting the poset structure), we have 
	\[
\cC_\alpha \subseteq \cP_\alpha.
\]
\end{lemma}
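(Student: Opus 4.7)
The plan is to proceed by induction on the number of applications of the moves (M.1) and (M.$2'$) used to produce a given element of $\cC_\alpha$, starting from the set $\mathcal{S}_\alpha$. The base case is immediate since every $b \in \mathcal{S}_\alpha$ is trivially its own componentwise maximum and hence lies in $\cP_\alpha$. It therefore suffices to show that $\cP_\alpha$ is stable under each of the two local moves.

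So suppose $s = \max_i b_i$ with each $b_i \in \mathcal{S}_\alpha$, and let $s'$ be obtained from $s$ by applying (M.1) or (M.$2'$) at positions $j, j+1$. Both moves require $s_j = 0$, which forces $b_{i,j} = 0$ for every $i$. The key observation is that this zero condition permits swapping the $j$-th and $(j+1)$-st entries of any individual $b_i$ without altering its positive part, producing a string $b'_i$ that still belongs to $\mathcal{S}_\alpha$. For (M.1), I would simply replace every $b_i$ by $b'_i$: outside $\{j,j+1\}$ nothing changes, position $j$ becomes $\max_i b_{i,j+1} = s_{j+1} = p$, and position $j+1$ becomes $0$, so $s' = \max_i b'_i \in \cP_\alpha$. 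For (M.$2'$), I would instead enlarge the family to $\{b_i\} \cup \{b'_i\}$; the original $b_i$'s keep position $j+1$ at value $p$ while the new $b'_i$'s boost position $j$ up to $p$, and all other coordinates are unaffected, again realizing $s' \in \cP_\alpha$.

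I do not anticipate a substantive obstacle: the entire argument hinges on the one-line observation that $s_j = 0$ forces every witness $b_i$ to satisfy $b_{i,j}=0$, which in turn makes the transposition at positions $j$ and $j+1$ a trivial rearrangement preserving $\mathcal{S}_\alpha$. Once this is in hand, the two move-by-move verifications are a routine check that componentwise maxima behave as described, and iterating gives $\cC_\alpha \subseteq \cP_\alpha$.
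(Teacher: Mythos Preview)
Your proof is correct and follows the same inductive strategy as the paper: induct on the number of local moves, use that $s_j=0$ forces every witness $b_i$ to vanish at position $j$, and then modify the witnesses accordingly. Your treatment of (M.$2'$), enlarging the witness family to $\{b_i\}\cup\{b'_i\}$, is in fact slightly cleaner than the paper's version, which applies the move directly to each $\tau'_j$ and so does not literally keep the resulting $\tau_j$ in $\mathcal{S}_\alpha$ for that case.
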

\begin{proof}
For any $\sigma \in \cC_\alpha$, let $r(\sigma)$ be the least nonnegative integer $r$ such that $\sigma$ can be obtained from $\mathcal{S}_\alpha$ by $r$ applications of the local moves (M.1)/(M.$2'$). We induct on $r$.

	Let $\sigma$ be a string of nonnegative integers such that $\sigma \in \cC_\alpha$. If $\sigma \in \mathcal{S}_\alpha$ (i.e., $r(\sigma) = 0$), then $\sigma \in \cP_\alpha$ by definition. Otherwise, choose $\sigma' \in \cC_\alpha$ such that $r(\sigma') = r(\sigma) - 1$, assuming inductively that $\sigma' \in \cP_\alpha$.
	
	Suppose $\sigma$ can by obtained from $\sigma'$ by application of the local move (M.$q$) in positions $i, i+1$, where $q \in \{1, 2' \}$. Then the $i$th entry $\sigma'(i) = 0$ and the $(i+1)$th entry $\sigma'(i+1) = p$ for some positive integer $p$. Since $\sigma' \in \cP_\alpha$, there exist $\tau'_1, \dots, \tau'_k \in \mathcal{S}_\alpha$ such that $\bigvee_{j=1}^k \tau'_j = \sigma'$. In particular, we have that $\tau'_j(i) = 0$ and $\tau'_j(i+1) \leq p$ for all $j$; moreover, $\tau'_j(i+1) = p$ for at least one $j$.
	Define $\tau_1, \dots, \tau_k$ by
	\[
	\tau_j = \begin{cases}
		\tau'_j, & \text{if } \tau'_j(i+1) = 0;\\
		(\text{M}.q) \cdot \tau'_j, & \text{if } \tau'_j(i+1) > 0
	\end{cases}
	\]
	where (M.$q) \cdot \tau'_j$ is performed in positions $i,i+1$. 
	Then, each $\tau_j \in \mathcal{S}_\alpha$ by construction and $\bigvee_{j=1}^k \tau_j = \sigma$, so $\sigma \in \cP_\alpha$.
\end{proof}

Let $\mu_{\cP_\alpha} : \cP_\alpha \to \Z$ be defined by the property that for all $p\in\cP_\alpha$, we have 
\begin{equation}\label{eq:mobius}
\sum_{q\leq p}\mu_{\cP_\alpha}(q)=1.
\end{equation}
 We call $
\mu_{\cP_\alpha}$ the \newword{M\"obius function} on $\cP_\alpha$. Note that, traditionally, the M\"obius function for $\widehat{\cP}_\alpha$ is defined by  $\mu_{\widehat{\cP}_\alpha}(\widehat{0})=1$ and $\sum_{q\leq p}\mu_{\widehat{\cP}_\alpha}(q)=0$ for $q \neq \widehat{0}$. So, to convert back and forth between our convention and the traditional one, we have $\mu_{\cP_\alpha}(p)=-\mu_{\widehat{\cP}_\alpha}(p)$ for all $p\in\cP_\alpha$. 
(The reason for this convention is to match a choice from \cite{Allen} which will be applied
in \autoref{sec:K-thy-james-reduced-prod}.) For background on M\"obius functions on posets, we refer to \cite[Chapter~3]{Stanley:EC1}.

\begin{example}\label{ex:mobius}
	Consider the poset $\cP_{13}$ from \autoref{ex:Hasse}. The values of the M\"obius function $\mu_{\cP_{13}}$ are recorded below.
	 \[
 \begin{tikzpicture}
	\node (0013) at (0,0) {$\textcolor{darkblue}{1}$};
	\node[right= 0.9 of 0013] (0103) {$\textcolor{darkblue}{1}$};
	\node[right= 1.4 of 0103] (0130) {$\textcolor{darkblue}{1}$};
	\node[right= 0.6 of 0130] (1003) {$\textcolor{darkblue}{1}$};
	\node[right= 0.9 of 1003] (1030) {$\textcolor{darkblue}{1}$};
	\node[right= 1.0 of 1030] (1300) {$\textcolor{darkblue}{1}$};
	\node[above right= 0.9 and 0.0 of 0013] (0113) {$\textcolor{darkblue}{-1}$}; 
	\node[right= 0.7  of 0113] (1103) {$\textcolor{darkblue}{-1}$};
	\node[right= 0.9 of 1103] (1013) {$\textcolor{darkblue}{-1}$};
	\node[right = 3.5 of 1013] (1130) {$\textcolor{darkblue}{-1}$}; 
	\node[above right = 0.9 and 0.1 of 0113] (1113) {$\textcolor{darkblue}{1}$}; 
	\node[above left = 0.9 and 0.9 of 1113] (1330) {$\textcolor{darkblue}{-1}$}; 
	\node[right = 0.9 of 1330] (0133) {$\textcolor{darkblue}{-1}$}; 
	\node[right = 1.2 of 0133] (1033) {$\textcolor{darkblue}{-1}$}; 
	\node[right = 2.4 of 1033] (1303) {$\textcolor{darkblue}{-1}$}; 
	\node[above right = 0.9 and 1.9 of 1330] (1133) {$\textcolor{darkblue}{1}$}; 
	\node[right = 1.4 of 1133] (1313) {$\textcolor{darkblue}{0}$}; 
	\node[above right = 0.9 and 0.2 of 1133] (1333) {$\textcolor{darkblue}{1}$}; 
\draw (0013) to (0113);
\draw (0013) to (1013);
\draw (0103) to (0113);
\draw (0103) to (1103);
\draw (0130) to (1130);
\draw (0130) to (0133);
\draw (1003) to (1103);
\draw (1003) to (1013);
\draw (1030) to (1130);
\draw (1030) to (1033);
\draw (1300) to (1330);
\draw (1300) to (1303);
\draw (0113) to (0133);
\draw (0113) to (1113);
\draw (1103) to (1113);
\draw (1103) to (1303);
\draw (1013) to (1113);
\draw (1013) to (1033);
\draw (1130) to (1330);
\draw (1130) to (1133);
\draw (1113) to (1133);
\draw (1113) to (1313);
\draw (1330) to (1333);
\draw (0133) to (1133);
\draw (1033) to (1133);
\draw (1303) to (1313);
\draw (1133) to (1333);
\draw (1313) to (1333);
\end{tikzpicture}.
 \]
 These values are computed by labeling each minimal element with $1$ and then working upwards by the recurrence Equation~\eqref{eq:mobius}.
 
 On the other hand, the poset $\cP_{11}$ of \autoref{ex:CandCtwiddle} has diagram
  \[
 \begin{tikzpicture}
	\node (011) at (0,0) {$011$};
	\node[right= 0.9 of 011] (101) {$101$};
	\node[right= 0.9 of 101] (110) {$110$};
	\node[above = 0.9  of 101] (111) {$111$}; 
\draw (011) to (111);
\draw (101) to (111);
\draw (110) to (111);
\end{tikzpicture}
 \]
 and M\"obius function $\mu_{\cP_{11}}$ given by
   \[
 \begin{tikzpicture}
	\node (011) at (0,0) {$\textcolor{darkblue}{1}$};;
	\node[right= 1.4 of 011] (101) {$\textcolor{darkblue}{1}$};;
	\node[right= 1.4 of 101] (110) {$\textcolor{darkblue}{1}$};;
	\node[above = 0.9  of 101] (111) {$\textcolor{darkblue}{-2}$};; 
\draw (011) to (111);
\draw (101) to (111);
\draw (110) to (111);
\end{tikzpicture}.
 \]
\end{example}

Let
\[
\pi\colon\widetilde{\cC}_\alpha\to\cC_\alpha
\]
be the projection map that forgets barring on all symbols. For $\sigma \in \cC_\alpha$, let
\[
\mu'_{\cC_\alpha}(\sigma):=\sum_{\widetilde{\sigma}\in\pi^{-1}(\sigma)}(-1)^{\barred(\widetilde{\sigma})}.
\]

We introduce the following two polynomials. Our primary goal in this section is to prove that they coincide.

\begin{definition}
\label{def:two-polys}
For any tuple $c=(c_1,\dots,c_n)$, let $\y^c:=y_1^{c_1}\dots y_n^{c_n}$ where the $y_i$ are indeterminates. For any composition $\alpha$, let
\[
\MK_\alpha^\cC(y_1,\dots,y_n)=\sum_{\sigma \in \cC_\alpha}\mu'_{\cC_\alpha}(\sigma)\y^\sigma
\]
and
\[
\MK_\alpha (y_1,\dots,y_n)=\sum_{\sigma \in \cP_\alpha}\mu_{\cP_\alpha}(\sigma) \y^\sigma.
\]
We call $\MK_\alpha (y_1,\dots,y_n)$ the \newword{quasisymmetric monomial glide} for the composition $\alpha$.
\end{definition}

The following theorem is our main result of this section.

\begin{theorem}
\label{thm:two-polys-agree}
For any composition $\alpha$, we have
\[
\MK_\alpha(y_1,\dots,y_n)=\MK_\alpha^\cC(y_1,\dots,y_n).
\]
Furthermore, these polynomials are both quasisymmetric in the variables $y_1,\dots,y_n$.
\end{theorem}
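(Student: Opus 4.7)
The plan is to derive a non-cancellative factored formula for $\MK_\alpha^\cC$, from which the quasisymmetry follows immediately, and the identity $\MK_\alpha = \MK_\alpha^\cC$ reduces to a greedy uniqueness statement on $\cP_\alpha$.

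The key first step, and the main technical obstacle, is a structural description of $\widetilde{\cC}_\alpha$. I claim $\tilde\sigma \in \widetilde{\cC}_\alpha$ if and only if there exist positions $1 \leq j_1 < \cdots < j_N \leq n$ such that the unbarred entries of $\tilde\sigma$ occur precisely at $j_1, \ldots, j_N$ with values $\alpha_1, \ldots, \alpha_N$ (and all positions $p < j_1$ are $0$), and each position $p$ in the gap $(j_r, j_{r+1})$ (with $j_{N+1} := n+1$) is either $0$ or $\bar{\alpha_r}$. For necessity, observe that $(M.1)$ and $(M.2)$ preserve the left-to-right sequence of unbarred nonzero entries, keeping it exactly $\alpha$; since each move requires an unbarred positive entry to be immediately preceded by $0$, no unbarred entry can cross a bar, so any $\bar{\alpha_r}$ lies at a position the $\alpha_r$-entry once occupied, and the no-crossing rule then forces every $\bar{\alpha_r}$ into the gap $(j_r, j_{r+1})$ (otherwise it would block $\alpha_{r+1}$'s leftward journey to $j_{r+1}$). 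For sufficiency, start from the element of $\mathcal{S}_\alpha$ in which each $\alpha_r$-entry sits at the maximum of $j_r$ and the rightmost prescribed bar position in $(j_r, j_{r+1})$; then move each $\alpha_r$-entry leftward one position at a time, applying $(M.2)$ when the current position is a prescribed bar and $(M.1)$ otherwise.

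Given this structure, each $\tilde\sigma \in \widetilde{\cC}_\alpha$ is parameterized by $(j_1 < \cdots < j_N)$ together with a bit $\epsilon_p \in \{0, 1\}$ for each $p \in (j_r, j_{r+1})$ recording whether $p$ carries $\bar{\alpha_r}$. The per-position sum $\sum_{\epsilon_p \in \{0,1\}} (-1)^{\epsilon_p} (y_p^{\alpha_r})^{\epsilon_p} = 1 - y_p^{\alpha_r}$ factors the sum defining $\MK_\alpha^\cC$ into
\begin{equation*}
\MK_\alpha^\cC(y_1, \ldots, y_n) \;=\; \sum_{1 \leq j_1 < \cdots < j_N \leq n} \prod_{r=1}^N y_{j_r}^{\alpha_r} \prod_{p = j_r+1}^{j_{r+1}-1} \bigl(1 - y_p^{\alpha_r}\bigr).
\end{equation*}
Expanding, the coefficient of any monomial $y_{i_1}^{\beta_1} \cdots y_{i_s}^{\beta_s}$ depends only on the exponent sequence $(\beta_1, \ldots, \beta_s)$ and not on the specific positions $i_1 < \cdots < i_s$, yielding quasisymmetry of $\MK_\alpha^\cC$.

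For the identity $\MK_\alpha = \MK_\alpha^\cC$, by the defining recursion of $\mu_{\cP_\alpha}$ it suffices to show $\sum_{\tau \in \cP_\alpha,\ \tau \leq \sigma} c(\tau) = 1$ for every $\sigma \in \cP_\alpha$, where $c(\tau)$ is the coefficient of $\y^\tau$ in $\MK_\alpha^\cC$ (set to $0$ outside $\cC_\alpha$). Rewriting this as $\sum_{\tilde\tau \in \widetilde{\cC}_\alpha,\ \pi(\tilde\tau) \leq \sigma} (-1)^{\barred(\tilde\tau)}$ and applying the parameterization, each gap-position sum over $\epsilon_p$ contributes $0$ when $\sigma_p \geq \alpha_r$ (cancellation) and $1$ when $\sigma_p < \alpha_r$ (forced $\epsilon_p = 0$). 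The full sum collapses to the count of sequences $j_1 < \cdots < j_N$ in $[n]$ satisfying $\sigma_{j_r} \geq \alpha_r$ for all $r$ and $\sigma_p < \alpha_r$ for every $p \in (j_r, j_{r+1})$. A greedy argument shows this count equals exactly $1$ for $\sigma \in \cP_\alpha$: the gap condition at $r=N$ forces $j_N$ to be the largest index with $\sigma_p \geq \alpha_N$, which then determines $j_{N-1}$ as the largest index below $j_N$ with $\sigma_p \geq \alpha_{N-1}$, and so on, yielding uniqueness; existence follows by writing $\sigma = \bigvee_l \tau^{(l)}$ with $\tau^{(l)} \in \mathcal{S}_\alpha$ and iteratively sliding the nonzero positions of some $\tau^{(l)}$ rightward into this greedy skeleton, each slide preserving $\tau \leq \sigma$ by the choice of greedy target.
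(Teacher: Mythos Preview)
Your proof is correct and takes a genuinely different route from the paper's. The paper establishes the identity in two separate pieces: first it shows $\mu_{\cP_\alpha}$ vanishes on $\cP_\alpha \smallsetminus \cC_\alpha$ via Rota's Crosscut Theorem together with hand-built sign-reversing involutions on sets of atoms (the most technical part, requiring a reduction from general $\alpha$ to the distinct-parts case via a standardization bijection); second, it computes $\mu_{\cP_\alpha}$ on $\cC_\alpha$ by verifying an explicit binomial identity. Your approach bypasses both steps at once: the factored product formula for $\MK_\alpha^\cC$ lets you verify the M\"obius recursion $\sum_{\tau \leq \sigma} c(\tau) = 1$ directly, and the vanishing of $\mu_{\cP_\alpha}$ on $\cP_\alpha \smallsetminus \cC_\alpha$ falls out as a byproduct rather than requiring its own argument. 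The greedy uniqueness-and-existence step is considerably more elementary than the crosscut/involution machinery, and the product expansion is a cleaner packaging than the binomial coefficients the paper derives.

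Two small points to tighten. First, the rewriting $\sum_{\tau \in \cP_\alpha,\,\tau \leq \sigma} c(\tau) = \sum_{\tilde\tau \in \widetilde{\cC}_\alpha,\,\pi(\tilde\tau) \leq \sigma} (-1)^{\barred(\tilde\tau)}$, and the final deduction that $\MK_\alpha = \MK_\alpha^\cC$, both silently use $\cC_\alpha \subseteq \cP_\alpha$; this is easy from your structural description (the atom with nonzero entries exactly at $j_1,\ldots,j_N$ lies below $\pi(\tilde\sigma)$, and joining such atoms over shifts of the barred positions recovers $\pi(\tilde\sigma)$), but it should be stated. Second, the existence half of the greedy argument is phrased as ``sliding rightward,'' which is suggestive but imprecise; the clean version is that for any atom $\tau \leq \sigma$ with nonzero positions $q_1 < \cdots < q_N$, downward induction on $r$ gives $j_r \geq q_r$ (since $q_r < q_{r+1} \leq j_{r+1}$ and $\sigma_{q_r} \geq \alpha_r$), so each greedy $j_r$ is well-defined.
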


\begin{example}\label{ex:polys}
	Let $\alpha = (1,3)$. Then by comparing \autoref{ex:Hasse}, \autoref{ex:CandCtwiddle}, and \autoref{ex:mobius}, we find that
	\begin{align*}
		\MK_{(1,3)}(y_1,y_2,y_3,y_4) &=\MK_{(1,3)}^\cC(y_1,y_2,y_3,y_4) \\
		&= \y^{0013} + \y^{0103} + \y^{0130} + \y^{1003} + \y^{1030} + \y^{1300} - \y^{0113} - \y^{1103} - \y^{1013} \\
		&-\y^{1130} + \y^{1113} - \y^{1330} - \y^{0133} - \y^{1033} - \y^{1303} + \y^{1133} + \y^{1333}.
	\end{align*}
	Note that these polynomials are equal and quasisymmetric as claimed by \autoref{thm:two-polys-agree}.
\end{example}

By \autoref{lem:subset}, we can prove \autoref{thm:two-polys-agree} by showing that $\mu_{\cP_\alpha}(\sigma) = \mu'_{\cC_\alpha}(\sigma)$ for all $\sigma \in \cC_\alpha$, and that $\mu_{\cP_\alpha}(\sigma) = 0$ for all $\sigma \in \cP_\alpha \smallsetminus \cC_\alpha$. These tasks are accomplished in the next two subsections.

First, we need the following lemma, characterizing $\cC_\alpha$ as a subset of $\cP_\alpha$.

\begin{lemma}\label{lem:C-in-P}
Let $\sigma \in \cP_\alpha$. Then $\sigma \in \cC_\alpha$ if and only if $\sigma$ is of the form $\sigma=\sigma_1\,^\frown\dots^\frown\sigma_k$ where each $\sigma_i$ is a string containing at least $N_i$ instances of $a_i$ with all other symbols being $0$. 
\end{lemma}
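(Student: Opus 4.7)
The plan is to prove the two directions of the biconditional separately. The forward direction ($\sigma \in \cC_\alpha$ implies the block decomposition exists) will proceed by induction on the minimum number of moves (M.1)/(M.$2'$) needed to produce $\sigma$ from an element of $\mathcal{S}_\alpha$, while the converse will be proved constructively by exhibiting a specific $\tau \in \mathcal{S}_\alpha$ together with a sequence of moves transforming $\tau$ into $\sigma$.

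For the forward direction, the base case is immediate: any $\sigma \in \mathcal{S}_\alpha$ admits a block decomposition by defining block $i$ to start just after the last $a_{i-1}$ of $\sigma$ (or at position $1$ when $i = 1$) and end just before the first $a_{i+1}$ (or at position $n$ when $i = k$). For the inductive step, I check that if $\sigma$ is obtained by applying a single move at positions $j, j+1$ to a string $\sigma'$ with a block decomposition $\sigma_1'\,^\frown\cdots^\frown\sigma_k'$, then $\sigma$ also admits one. The case analysis splits on whether positions $j, j+1$ both lie in a single block $\sigma_i'$, in which case the required nonzero symbol $p$ at position $j+1$ must equal $a_i$ and the same block boundaries work (the $a_i$-count in block $i$ is unchanged for (M.1) and increases by $1$ for (M.$2'$)), or straddle the boundary between $\sigma_i'$ and $\sigma_{i+1}'$, in which case necessarily $p = a_{i+1}$ and shifting the block boundary one position to the left gives a valid decomposition of $\sigma$. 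In this straddling case, block $i$ merely loses a trailing zero, so its $a_i$-count is unchanged, while block $i+1$ gains an $a_{i+1}$ at position $j$; block $i$ remains nonempty because it was required to contain at least $N_i \geq 1$ copies of $a_i$, so it had at least one position besides $j$.

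For the converse, assume $\sigma$ admits a block decomposition $\sigma = \sigma_1^\frown\cdots^\frown\sigma_k$ with each $\sigma_i$ of length $n_i$ containing $N_i' \geq N_i$ copies of $a_i$. Set $\tau_i := 0^{n_i - N_i}(a_i)^{N_i}$ and $\tau := \tau_1^\frown\cdots^\frown\tau_k$; then $\tau^+ = (a_1)^{N_1}\cdots(a_k)^{N_k} = \alpha$, so $\tau \in \mathcal{S}_\alpha$. Since moves acting on disjoint pairs of positions commute, it suffices to show that for each $i$, the block $\tau_i$ can be transformed into $\sigma_i$ using only moves confined to block $i$. The cleanest way is to exhibit a sequence of \emph{reverse} moves starting from $\sigma_i$: use reverse (M.1), namely $a_i 0 \to 0 a_i$, to shift the $a_i$s rightward one at a time (rightmost first) until they occupy positions $n_i - N_i' + 1, \ldots, n_i$ of the block; then apply reverse (M.$2'$), namely $a_i a_i \to 0 a_i$, exactly $N_i' - N_i$ times to the leftmost adjacent pair, leaving precisely $\tau_i$. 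Reversing this sequence produces the required forward derivation $\tau_i \to \sigma_i$.

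The main obstacle is the converse direction: a direct forward construction of $\tau \to \sigma$ is awkward, because (M.1) only shifts $a_i$s to the left and (M.$2'$) only spawns a new $a_i$ immediately to the left of an existing one, so once $a_i$s have been pushed leftward one cannot undo this to fill positions further to the right. Running the argument in reverse sidesteps these order-of-operations difficulties entirely by first gathering $a_i$s to the right via reverse (M.1) and then eliminating duplicates from the left via reverse (M.$2'$).
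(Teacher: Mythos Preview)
Your proof is correct. Both directions are carefully argued, and the induction on the number of moves for the forward direction together with the explicit reverse-move construction for the converse leave no gaps.

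Your approach differs from the paper's in the forward direction. The paper lifts $\sigma$ to an element $\widetilde{\sigma}$ of the barred set $\widetilde{\cC}_\alpha$ and observes that, by construction, any such $\widetilde{\sigma}$ decomposes as $\widetilde{\sigma}_1{}^\frown\cdots{}^\frown\widetilde{\sigma}_k$ where $\widetilde{\sigma}_i$ is built from the length-$N_i$ string $(a_i,\dots,a_i)$ by inserting $0$'s and $\bar{a}_i$'s; stripping bars then gives the block form of $\sigma$ directly. This is a bit slicker because the barred symbols keep track of which entries were spawned by (M.2), so the block structure is visible without an inductive case analysis. Your argument, by contrast, works entirely inside $\cC_\alpha$ and avoids the auxiliary set $\widetilde{\cC}_\alpha$; it is more elementary and makes the boundary-shifting mechanism explicit, at the cost of the small case split on whether the move straddles a block boundary. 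For the converse, the paper simply declares it ``straightforward,'' whereas you give a concrete derivation via reverse moves---a welcome addition.
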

\begin{proof}
By \autoref{lem:subset}, we have $\cC_\alpha \subseteq \cP_\alpha$.

Suppose $\sigma\in\cC_\alpha$ and consider some $\widetilde{\sigma}\in\pi^{-1}(\sigma)$. Then by construction, we must have $\widetilde{\sigma}=\widetilde{\sigma}_1\,^\frown\dots^\frown\widetilde{\sigma}_k$ where each $\widetilde{\sigma}_i$ is obtained from the length-$N_i$ string $(a_i,\dots,a_i)$ by inserting additional $0$'s or additional barred symbols $\bar{a}_i$ (the latter not in first position). Thus, $\widetilde{\sigma}_i$ must have $\ell_i\geq N_i$ elements from $\{a_i, \bar{a}_i\}$ (for some $\ell_i$), exactly $\ell_i-N_i$ of them must be barred, and the first nonzero entry of $\widetilde{\sigma}_i$ must be unbarred. In particular, $\sigma$ has the form $\sigma=\sigma_1\,^\frown\dots^\frown\sigma_k$ where each entry of each $\sigma_i$ is either $0$ or $a_i$, and $\sigma_i$ has exactly $\ell_i\geq N_i$ entries equal to $a_i$. Conversely, it is straightforward to see that every such $\sigma$ is in $\cC_\alpha$. 
\end{proof}

\subsection{$\mu_{\cP_\alpha}$ vanishes away from $\cC_\alpha$.}

In this subsection, we establish the following proposition.

\begin{proposition}
\label{prop:mu=0-on-non-Cs}
If $\sigma\in\cP_\alpha\smallsetminus\cC_\alpha$, then $\mu_{\cP_\alpha}(\sigma)=0$.
\end{proposition}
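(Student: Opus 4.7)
The plan is to apply Rota's crosscut theorem to the interval $[\widehat{0}, \sigma]$ in the lattice $\widehat{\cP}_\alpha$. Since $\widehat{\cP}_\alpha$ is a lattice by \autoref{lem:lattice} and the two M\"obius conventions are related by $\mu_{\cP_\alpha}(\sigma) = -\mu_{\widehat{\cP}_\alpha}(\widehat{0}, \sigma)$, it suffices to show $\mu_{\widehat{\cP}_\alpha}(\widehat{0}, \sigma) = 0$. The atoms of $[\widehat{0}, \sigma]$ are precisely the elements of $T_\sigma := \mathcal{S}_\alpha \cap \{t : t \leq \sigma\}$, and by construction of $\cP_\alpha$ every element of $(\widehat{0}, \sigma]$ is a join of a subset of $T_\sigma$. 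The crosscut theorem then rewrites the M\"obius function as
\[
\mu_{\widehat{\cP}_\alpha}(\widehat{0}, \sigma) = \sum_{B \subseteq T_\sigma,\ \bigvee B = \sigma} (-1)^{|B|},
\]
reducing the problem to constructing a sign-reversing fixed-point-free involution on the set of spanning subsets $\{B \subseteq T_\sigma : \bigvee B = \sigma\}$.

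The strategy is to locate a distinguished atom $t^\ast \in T_\sigma$ that is \emph{always redundant}: for every spanning $B \ni t^\ast$, the subset $B \setminus \{t^\ast\}$ also spans $\sigma$. Given such a $t^\ast$, the symmetric difference $B \mapsto B \triangle \{t^\ast\}$ is a fixed-point-free, sign-reversing involution on spanning subsets, since $t^\ast \leq \sigma$ implies $B \cup \{t^\ast\}$ is spanning whenever $B$ is, and redundancy handles $B \setminus \{t^\ast\}$. The alternating sum then vanishes.

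To produce $t^\ast$, we use the hypothesis $\sigma \notin \cC_\alpha$ via \autoref{lem:C-in-P}: there is no decomposition $\sigma = \sigma_1 {}^\frown \cdots {}^\frown \sigma_k$ in which $\sigma_i$ contains only $0$s and at least $N_i$ copies of $a_i$. This obstruction should take one of two forms: either an \emph{interleaving} failure, where some $a_i$ and $a_j$ appear in positions of $\sigma$ out of the order dictated by the block structure of $\alpha$; or a \emph{count} failure, where any candidate block decomposition places too few copies of some $a_i$ in its designated block. In either case, greedy leftmost versus rightmost placements of the entries of $\alpha$ into $\sigma$-compatible positions yield at least two distinct atoms that agree on their ``essential'' positions; I plan to take $t^\ast$ to be one of these extremal atoms (perhaps a hybrid designed so each of its nonzero coordinates is also attained by another atom whose inclusion in any spanning $B$ is forced by the need to cover positions other than that coordinate).

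The main obstacle is verifying redundancy of $t^\ast$, i.e., showing that for each nonzero position $p$ of $t^\ast$, any spanning $B$ with $t^\ast \in B$ necessarily contains another atom $t' \in T_\sigma \setminus \{t^\ast\}$ with $t'_p = \sigma_p$. I expect this to require a case analysis separating the interleaving and count failures: the interleaving case should admit a direct position-swap argument producing the required alternate atom, while the count case may require tracking which positions of $\sigma$ have a unique versus multiple atom-source and arguing inductively on the surplus.
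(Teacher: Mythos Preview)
Your crosscut-theorem setup is correct, and your intended case split (interleaving versus count failure) matches the paper's. However, the core mechanism --- toggling a single \emph{fixed} atom $t^\ast$ --- does not work. For the map $B \mapsto B \triangle \{t^\ast\}$ to be a well-defined involution on spanning subsets, you need $t^\ast$ to lie in no minimal spanning set of $\sigma$, and in general no such atom exists. A concrete counterexample: take $\alpha = (1,3)$, $n = 8$, and $\sigma = 11331133 \in \cP_\alpha \smallsetminus \cC_\alpha$. Encoding each atom below $\sigma$ by the pair $(p,q)$ recording the positions of its $1$ and its $3$, one checks that \emph{every} atom $\leq \sigma$ belongs to some minimal spanning set; for instance $(3,7)$ lies in the minimal set $\{(3,7),(5,8),(6,8),(1,3),(2,4)\}$, and $(1,7)$ lies in $\{(1,7),(2,3),(3,4),(5,8),(6,8)\}$, with the remaining atoms handled similarly. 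Hence no globally redundant $t^\ast$ exists, and your ``hybrid'' refinement cannot rescue this as long as $t^\ast$ is required to be independent of $B$.

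The paper's proof uses the same crosscut reduction but toggles an atom $u_R$ that \emph{depends on} $R$: one fixes canonical subsets $\cS_-, \cS_+ \subseteq T_\sigma$ (determined by the defect of $\sigma$), takes $u_-$ and $u_+$ to be the lex-extremal elements of $R \cap \cS_-$ and $R \cap \cS_+$, and defines $u_R$ by splicing $u_-$ and $u_+$ at a fixed index. Redundancy follows from $u_R \leq u_- \vee u_+$ together with $u_\pm \in R$, and the map is still an involution because $u_{\iota(R)} = u_R$ (adding or removing $u_R$ does not disturb the lex-extremal choices, since $u_R \neq u_\pm$). The paper also needs an additional standardization step, reducing from general $\alpha$ to one with pairwise distinct parts, before this splicing argument goes through; your proposal does not address this reduction.
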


The reader may enjoy confirming directly that \autoref{prop:mu=0-on-non-Cs} holds in the case of \autoref{ex:Hasse} and \autoref{ex:mobius}.
We first handle the following special case, which will be the key to proving the result in general.

\begin{lemma}
\label{l:mu=0-on-non-Cs-distinct}
Suppose that $a_1,\dots,a_k$ are pairwise distinct. If $\sigma \in \cP_\alpha\smallsetminus\cC_\alpha$, then $\mu_{\cP_\alpha}(\sigma)=0$.
\end{lemma}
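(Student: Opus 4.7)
My plan is to prove $\mu_{\cP_\alpha}(\sigma) = 0$ by establishing that the order complex of the open interval $(\widehat{0}, \sigma) \subseteq \widehat{\cP}_\alpha$ is contractible; by P.\ Hall's theorem this gives $\mu_{\widehat{\cP}_\alpha}(\widehat{0}, \sigma) = 0$, equivalently $\mu_{\cP_\alpha}(\sigma) = 0$ under the sign convention discussed after \eqref{eq:mobius}. Contractibility will follow from the existence of a \emph{cone point}: an element $\tau_0 \in (\widehat{0}, \sigma)$ such that $\tau \vee \tau_0 \in (\widehat{0}, \sigma)$ for every $\tau \in (\widehat{0}, \sigma)$. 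The order-preserving self-map $\tau \mapsto \tau \vee \tau_0$ then satisfies $\tau \leq \tau \vee \tau_0 \geq \tau_0$, yielding a deformation retraction of the order complex onto the vertex $\{\tau_0\}$.

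I would construct $\tau_0$ as an atom $\beta \in \mathcal{S}_\alpha$ with $\beta \leq \sigma$ whose \emph{deficit} $D := \{r : \beta(r) < \sigma(r)\}$ is \emph{rigid}, in the sense that any $\tau \in \cP_\alpha$ with $\tau \leq \sigma$ and $\tau(r) = \sigma(r)$ for every $r \in D$ must coincide with $\sigma$. Granted rigidity, if $\tau \in \cP_\alpha$ satisfies $\tau < \sigma$, then some $r \in D$ has $\tau(r) < \sigma(r)$, giving $(\tau \vee \tau_0)(r) = \max(\tau(r), \tau_0(r)) < \sigma(r)$ and hence $\tau \vee \tau_0 < \sigma$, establishing the cone-point property. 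The existence of a rigid atom would be extracted from the obstruction to $\sigma \in \cC_\alpha$ supplied by \autoref{lem:C-in-P}: in the distinct case this obstruction takes one of two forms, either (i) some $a_i$ appears fewer than $N_i$ times in $\sigma$, or (ii) there exist positions $p < q$ with $\sigma(p) = a_j$ and $\sigma(q) = a_i$ for some $i < j$ (an \emph{inversion}).

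In case (ii), the natural candidate for $\tau_0$ is an atom with $\tau_0(p) = \tau_0(q) = 0$, produced by a greedy placement of the $a_l$'s into $[n] \smallsetminus \{p, q\}$; rigidity at $D \supseteq \{p, q\}$ would follow from the rigid blockwise order within every atom of $\mathcal{S}_\alpha$, since realizing $\sigma(p) = a_j$ and $\sigma(q) = a_i$ simultaneously in some $\tau$ forces $\tau$ to dominate a specific collection of atoms whose join is all of $\sigma$. Case (i) is handled analogously but more delicately: with no localized inversion pair available, the deficit $D$ must be chosen more globally so that $\sigma$'s value at each $r \in D$ is realizable only by a small set of atoms whose collective join recovers $\sigma$. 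The principal technical obstacle is the explicit construction of $\tau_0$ in case (i), and the verification of rigidity in the presence of simultaneous obstructions (i) and (ii), since both require carefully tracking which atoms of $\mathcal{S}_\alpha$ can realize each of $\sigma$'s nonzero values at each position.
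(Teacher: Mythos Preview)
Your strategy is a natural alternative to the paper's: rather than applying Rota's Crosscut Theorem and building a sign-reversing involution on $\{R\subseteq\cS_\alpha:\bigvee R=\sigma\}$, you aim to find a single cone point $\tau_0\in(\widehat{0},\sigma)$ and deduce contractibility of the order complex. Unfortunately the approach does not work, because such a cone point need not exist.

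Here is an explicit counterexample. Take $\alpha=(1,2,3)$ (so $k=3$, all $a_i$ distinct, $N_1=N_2=N_3=1$) and $n=6$. The atoms of $\widehat{\cP}_\alpha$ lying below $\sigma:=112123$ are
\[
A=102003,\quad B=012003,\quad C=100023,\quad D=010023,\quad E=000123,
\]
and one checks $\sigma=A\vee B\vee E\in\cP_\alpha$ while $\sigma\notin\cC_\alpha$ (the entry $1$ at position $4$ obstructs the block decomposition of \autoref{lem:C-in-P}). Now for \emph{every} $\tau_0<\sigma$ in $\cP_\alpha$ there exists $\tau<\sigma$ with $\tau_0\vee\tau=\sigma$. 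For instance, taking $\tau_0=110023$ (an atom with zeros at your inversion positions $p=3,\,q=4$), one has $\tau_0\vee(A\vee E)=\tau_0\vee 102123=\sigma$ with $102123<\sigma$; similar witnesses exist for each of the fifteen elements below $\sigma$. So no cone point exists, and your rigidity claim fails: realising $\sigma(3)=2$ and $\sigma(4)=1$ in some $\tau$ forces only $\tau\geq E$ together with $\tau\geq A$ \emph{or} $\tau\geq B$, and $E\vee A=102123$, $E\vee B=012123$ are both strictly below $\sigma$.

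The paper's Crosscut-plus-involution argument avoids this obstruction because the toggled atom $u$ is allowed to depend on the subset $R$, not just on $\sigma$; this extra flexibility is essential. In the example above the subsets $R$ with $\bigvee R=\sigma$ come in sizes $3,4,5$ with counts $3,4,1$, giving $-3+4-1=0$ as required, but no single element of $(\widehat{0},\sigma)$ serves as a uniform toggle.
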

\begin{proof}
Fix $\sigma \in \cP_\alpha\smallsetminus\cC_\alpha$.
The set $\cS:=\cS_\alpha$ consists of the \newword{atoms} of $\widehat{\cP}_\alpha$, i.e., the minimal elements of $\cP_\alpha$. Since $\widehat{\cP}_\alpha$ is a lattice by \autoref{lem:lattice}, Rota's Crosscut Theorem \cite{Rota-Mobius} tells us that if
\[
\cR =\{R \subseteq \cS : \bigvee R = \sigma \},
\]
then
\[
-\mu_{\cP_\alpha}(\sigma)=\mu_{\widehat{\cP}_\alpha}(\sigma)=\sum_{R\in\cR}(-1)^{|R|}.
\]
To prove $\mu_{\cP_\alpha}(\sigma)=0$, it suffices to construct a fixed-point-free involution
\[
\iota\colon\cR\to\cR
\]
with the property that $|\iota(R)|=|R|\pm1$ for all $R \in \cR$. Indeed, then we have
\[
\mu_{\cP_\alpha}(\sigma)=\sum_{R\in\cR}(-1)^{|\iota(R)|}=-\mu_{\cP_\alpha}(\sigma),
\]
so that $\mu_{\cP_\alpha}(\sigma) = 0$.

Suppose the zero entries of $\sigma$ occur in positions $z_1<\dots<z_M$. Note that if $\sigma=\bigvee R$, then every $\tau \in R$ has $\tau \leq \sigma$; in particular, $\tau$ must also have zeros in positions $z_1<\dots<z_M$ (and potentially additional positions). Therefore, restricting attention to the positions $[n]\smallsetminus\{z_1,\dots,z_M\}$, we may suppose $\sigma$ has no $0$ entries at all. We may therefore write
\[
\sigma=(\underbrace{a_{i_1},\dots,a_{i_1}}_{\ell_1},\underbrace{a_{i_2},\dots,a_{i_2}}_{\ell_2},\dots,\underbrace{a_{i_K},\dots,a_{i_K}}_{\ell_K})
\]
where $i_j\neq i_{j+1}$. Since the first entry of every atom $\tau \in \cS$ is either $0$ or $a_1$, and since $\sigma$ is assumed to have only non-zero entries, we see the first entry of $\sigma$ is $a_1$. Similarly, the final entry of $\sigma$ must be $a_k$. That is, we have
\[
i_1=1\quad\textrm{and}\quad i_K=k.
\]
Every atom $\tau \in \cS$ is determined by the positions of its non-zero entries. We record these positions (in increasing order) in a vector $p(\tau)\in\Z_{>0}^k$ and refer to $p(\tau)$ as the \newword{position vector} of $\tau$. For example, if $\tau = (0,1,1,0,3,0,3)$, then $p(\tau) = (2,3,5,7)$.

\vspace{1em}

\noindent\textsf{(Case 1:~There exist $J$ and $L$ with
$J<L$ and $i_L<i_J$):}
Note that if any such $J$ and $L$ exist, we may make a canonical choice of $J$ and $L$ (depending only on $\sigma$) by requiring that $(J,L)$ is the smallest possible in lexicographical order. 

Let $\cS_- \subseteq \cS$ be the set of atoms $\tau$ 
lying below $\sigma$ with an $a_{i_J}$ occurring in the same position of $\tau$ as an $a_{i_J}$-entry of $\sigma$;
that is,
\[
\cS_- = \{ \tau \in \cS : \tau \leq \sigma \text{ and there exists $h \in [1 + \sum_{g=1}^{J-1} \ell_g, \sum_{g=1}^{J} \ell_g]$ with } \tau(h) = \sigma(h) = a_{i_J} \}.
\]
 Similarly, let $\cS_+ \subseteq \cS$ be the set of atoms $\tau$ lying under $\sigma$ where an $a_{i_L}$ occurs in the same position as an $a_{i_L}$-entry of $\sigma$; that is,
 \[
\cS_+ = \{ \tau \in \cS : \tau \leq \sigma \text{ and there exists $h \in [1 + \sum_{g=1}^{L-1} \ell_g, \sum_{g=1}^{L} \ell_g]$ with } \tau(h) = \sigma(h) = a_{i_L} \}.
\]
Note that $\cS_\pm$ are also canonically associated to $\sigma$.

Let $R\in\cR$. Since the $a_i$ are pairwise distinct, we have $R\cap\cS_-\neq\varnothing$ and $R\cap\cS_+\neq\varnothing$. Let $u_-\in R\cap\cS_-$ be the atom whose position vector $p(u_-)$ is least in lexicographical order. Similarly, let $u_+\in R\cap\cS_+$ be the atom whose position vector $p(u_+)$ is greatest in lexicographical order. 

\vspace{0.75em}

For example, suppose that $\alpha = (1,3)$ and $\sigma = 11331133$. Then $a_1 = 1$ and $a_2 = 3$.  Moreover, \[i_1 = 1, i_2 = 2, i_3 = 1, \text{ and } i_4 = 2.\] Hence, we have $i_3 < i_2$ and can only choose $J = 2$ and $L=3$, so $a_{i_J} = 3$ and $a_{i_L} = 1$.
An example of $R \in \cR$ is \[
R = \{ 10300000, 01030000,00001030,00000103, 00000013 \}. 
\]
In this case, we have 
\[
R \cap \cS_- = \{ \tau \in R : \text{there exists } h \in [3,4] \text{ with } \tau(h) = \sigma(h) = 3 \} = \{ 10300000, 01030000 \}
\]
 and
\[
R \cap \cS_+ = \{ \tau \in R : \text{there exists } h \in [5,6] \text{ with } \tau(h) = \sigma(h) = 1 \}  = \{00001030, 00000103 \}.
\]
Therefore, $u_- = 10300000$ and $u_+ = 00001030$.

\vspace{0.75em}

Notice that $u_\pm$ are canonically associated to $R$ and that $u_-\neq u_+$ since the locations of their $a_{i_L}$-entries differ. Indeed, $u_+$ has an $a_{i_L}$ appearing in the interval $[1 + \sum_{g=1}^{L-1} \ell_g, \sum_{g=1}^{L} \ell_g]$; in contrast, since $u_-$ is an atom, all of its $a_{i_L}$ appear left of every $a_{i_J}$ and in particular left of the one in the interval $[1 + \sum_{g=1}^{J-1} \ell_g, \sum_{g=1}^{J} \ell_g]$.

Let $u:=u_R \in \cS$ be the atom defined by ``splicing together'' $u_-$ and $u_+$ as follows. 
For $i\leq i_L$, $u$ has the values $a_i$ in the same positions as $u_-$; for $i>i_L$, $u$ has the values $a_i$ in the same positions as $u_+$. Note that $u$ is well-defined, since each position where $u_-$ has a value $a_i$ with $i\leq i_L$ is strictly left of each position where $u_+$ has a value $a_j$ with $j>i_L$; moreover, $u$ is clearly an atom. Also observe that $u\neq u_-$ since their $a_{i_J}$-positions differ, and $u\neq u_+$ since their $a_{i_L}$-positions differ (e.g., in the running example above, we have $u_R = 10000030$).

Since $u$ is canonically associated to $R$, we may define
\begin{equation}\label{eq:iota}
\iota(R)=
\begin{cases}
R\cup\{u\}, & u\notin R;\\
R\smallsetminus u, & u\in R,
\end{cases}
\end{equation}
a sort of \emph{toggle} in the sense of \cite[Definition~2.1]{Striker}.
Furthermore, since 
\[
u\vee u_-\vee u_+=u_-\vee u_+,
\] we see that 
\[
\bigvee\iota(R)=\bigvee R=\sigma.
\]
 In other words, $\iota(R)\in\cR$. Lastly, since $\iota(R)$ differs from $R$ exactly by the element $u$, and $u$ agrees with $u_-$ (respectively $u_+$) for $i\leq i_L$ (respectively $i>i_L$), we see $u_R=u_{\iota(R)}$; hence, $\iota(\iota(R))=R$ and $\iota$ is an involution. By construction, $|\iota(R)|=|R|\pm1$.

\vspace{1em}

\noindent\textsf{(Case 2:~We have $i_1<i_2<\dots<i_K$):} Since $\sigma \in \cP_\alpha\smallsetminus\cC_\alpha$, \autoref{lem:C-in-P} shows that there exists $J \in [k-1]$ such that
\[
i_1=1,\quad\dots\quad, i_J=J,\quad\textrm{and }\quad m:=i_{J+1}> J+1.
\]
We follow a similar strategy as in {\sf Case 1}. Let $\cS_-$ be the set of atoms $\tau\leq\sigma$ such that $\tau(\ell_1+\dots+\ell_J+1)=a_m$. In other words, these are the atoms below $\sigma$ where the value $a_m$ occurs in the same position as the first $a_m$ entry of $\sigma$. Similarly, let $\cS_+$ be the atoms lying below $\sigma$ where $a_J$ occurs in the same position as the last $a_J$ of $\sigma$.

\vspace{0.75em}

For example, suppose that $\alpha = (5,4,7,1)$ and $\sigma = 55771$. Then, $J=1$ and $m = 3$. We have \[
\cS_- = \{ 54701,54710\}\quad\textrm{and}\quad
\cS_+ = \{05471\}.
\]

\vspace{0.75em}

Let $R\in\cR$. Again, we have $R\cap\cS_-\neq\varnothing$ and $R\cap\cS_+\neq\varnothing$. As in {\sf Case 1}, let $u_-\in R\cap\cS_-$ be the atom whose position vector $p(u_-)$ is least in lexicographical order and let $u_+\in R\cap\cS_+$ be the atom whose position vector $p(u_+)$ is greatest in lexicographical order. Since $J+1<m$, we see the positions of the $a_{J+1}$-entries differ in $u_\pm$, and so $u_-\neq u_+$.

\vspace{0.75em}

Continuing our example, a choice of $R \in \cR$ is \[
R = \{ 05471, 54701 \}.
\] With this choice, we have 
\[
R \cap \cS_- = \{54701 \} \text{ and } R \cap \cS_+ = \{05471\}.
\]
Then, $u_- = 54701$ and $u_+ = 05471$.

\vspace{0.75em}

Let $u:=u_R$ be the atom defined as follows. For $i\leq J$, we let $u$ have the values $a_i$ in the same positions as does $u_-$; for $i>J$, we let $u$ have the values $a_i$ in the same positions as does $u_+$. (In our running example, $u = 50471$.) Then $u\neq u_-$ since their $a_{J+1}$-positions differ, and $u\neq u_+$ since their $a_J$-positions differ. We can therefore again define $\iota(R)$ by Equation~\eqref{eq:iota}. Once again, $u\vee u_-\vee u_+=u_-\vee u_+$, and so $\iota(R)\in\cR$. We also have $u_R=u_{\iota(R)}$, so $\iota(\iota(R))=R$. Clearly, $|\iota(R)|=|R|\pm1$.
\end{proof}

We will prove \autoref{prop:mu=0-on-non-Cs} by reducing to the special case covered by \autoref{l:mu=0-on-non-Cs-distinct}. First, let us rewrite Equation~\eqref{eqn:alpha-underbrace-expression} in different notation. Let
\[
\alpha=(\alpha_1,\dots,\alpha_N);
\]
here, we do not assume the $\alpha_i$ are pairwise distinct, and we even allow the possibility that $\alpha_i=\alpha_{i+1}$. Let $\omega\in S_N$ be the unique permutation with the properties that
\[
\alpha_{\omega(1)}\leq \alpha_{\omega(2)}\leq\dots\leq \alpha_{\omega(N)}
\]
and if $\alpha_{\omega(i)}=\alpha_{\omega(i+1)}$, then $\omega(i)<\omega(i+1)$; that is to say, $\omega$ is the shortest permutation that sorts $\alpha$. Let
\[
\beta=(\beta_1,\dots,\beta_N),\quad\textrm{where}\quad \beta_i=\omega^{-1}(i).
\]

For example, if $\alpha = (2,1,3,1)$, then we have $\omega = 2413$, where we write a permutation $w \in S_N$ in \newword{one-line notation} $w = w(1)w(2)\ldots w(N)$, for then $(\alpha_{\omega(1)}, \alpha_{\omega(2)}, \alpha_{\omega(3)}, \alpha_{\omega(4)}) = (1,1,2,3).$ Therefore, $\omega^{-1} = 3142$, and so $\beta = (\beta_1, \beta_2, \beta_3, \beta_4) = (3,1,4,2)$.

Then we have two posets $\cP_\alpha$ and $\cP_\beta$ of strings of length $n$. Recall that $\cS_\alpha$ (respectively $\cS_\beta$) is the set of atoms of $\widehat{\cP}_\alpha$ (respectively $\widehat{\cP}_\beta$). Since an atom is determined by the positions of its nonzero entries, we have mutually-inverse bijections
\[
\xymatrix{
 \cS_\alpha \ar@<.5ex>[r]^-{\st}  & \cS_\beta\ar@<.5ex>[l]^-{\des}
}
\]
where $\st$ replaces the $i$th nonzero entry with $\beta_i$, and $\des$ replaces the $i$th nonzero entry with $\alpha_i$. We refer to $\st$ as \newword{standardization} and $\des$ as \newword{semistandardization}, since they are analogous to the classical (semi)standardization bijections on Young tableaux (see, e.g., \cite{Stanley:EC2,Kirillov,Pechenik.Yong:genomic} for discussion of these maps in that setting). These maps induce bijections on the power sets of $\cS_\alpha$ and $\cS_\beta$, which we continue to call the standardization and semistandardization maps. We use superscripts to denote these functions, e.g., for $\tau \in\cS_\alpha$ and $T\subseteq \cS_\alpha$, we write $\tau^{\st} \in \cS_\beta$ and $T^{\st} := \{ \theta^{\st} : \theta \in R \} \subseteq \cS_\beta$ for their standardizations.

\begin{lemma}
\label{l:bigveedesinC}
If $T\subseteq \cS_\beta$ and $\bigvee T\in\cC_\beta$, then $\bigvee(T^{\des})\in\cC_\alpha$.
\end{lemma}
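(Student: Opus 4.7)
The plan is to unpack the hypothesis via \autoref{lem:C-in-P}, analyze $\bigvee(T^{\des})$ pointwise, and repackage the conclusion via \autoref{lem:C-in-P} once more. The crucial combinatorial input is the order-preservation $\beta_\ell\leq\beta_i\Rightarrow\alpha_\ell\leq\alpha_i$, which follows from the definition of $\beta$ as the standardization of $\alpha$: since $\omega$ sorts $\alpha$, one has $\alpha_\ell=\alpha_{\omega(\beta_\ell)}\leq\alpha_{\omega(\beta_i)}=\alpha_i$.

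First I would apply \autoref{lem:C-in-P} to $\bigvee T\in\cC_\beta$. Because $\beta$ is a permutation of $[N]$, each of its ``$N_i$'' equals $1$, and the lemma produces a decomposition $\bigvee T=\tau_1\,^\frown\cdots\,^\frown\tau_N$ in which $\tau_i$ is valued in $\{0,\beta_i\}$ and contains at least one $\beta_i$. Let $[n]=I_1\sqcup\cdots\sqcup I_N$ be the corresponding partition of positions.

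Next, I would show pointwise that, at each position $j\in I_i$,
\[
\bigvee(T^{\des})(j)=\begin{cases}0, & \text{if } \bigvee T(j)=0;\\ \alpha_i, & \text{if } \bigvee T(j)=\beta_i.\end{cases}
\]
The first case is immediate, since $\theta\leq\bigvee T$ forces $\theta(j)=0$ for all $\theta\in T$. For the second case, at least one $\theta\in T$ attains $\theta(j)=\beta_i$; since the $\beta_m$ are distinct, $j$ is the $i$-th nonzero position of such a $\theta$, giving $\theta^{\des}(j)=\alpha_i$. Any other $\theta'\in T$ with $\theta'(j)\neq 0$ must have $\theta'(j)=\beta_\ell$ for some $\ell$ with $\beta_\ell\leq\beta_i$, so ${\theta'}^{\des}(j)=\alpha_\ell\leq\alpha_i$ by the order-preservation above. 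Hence the maximum is exactly $\alpha_i$.

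This produces a decomposition $\bigvee(T^{\des})=\tau_1^s\,^\frown\cdots\,^\frown\tau_N^s$, where $\tau_i^s$ is supported on $I_i$ with entries in $\{0,\alpha_i\}$ and at least one $\alpha_i$ (coming from any position of $I_i$ where $\bigvee T$ attains $\beta_i$). To finish, I would regroup by the repeated blocks of $\alpha$: writing $\alpha=(\underbrace{a_1,\ldots,a_1}_{N_1},\ldots,\underbrace{a_k,\ldots,a_k}_{N_k})$, I concatenate the $\tau_i^s$ over the $N_m$ indices $i$ with $\alpha_i=a_m$ to form a segment $\rho_m$ with entries in $\{0,a_m\}$ and at least $N_m$ copies of $a_m$. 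Then $\bigvee(T^{\des})=\rho_1\,^\frown\cdots\,^\frown\rho_k$ has exactly the form required by \autoref{lem:C-in-P} for membership in $\cC_\alpha$.

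The main obstacle is the pointwise analysis, specifically the subtlety that a $\theta\in T$ may place a nonzero entry $\beta_\ell$ with $\ell\neq i$ in a position of segment $I_i$ whenever $\beta_\ell<\beta_i$. Such ``interloping'' entries are prevented from disturbing the segmentation of $\bigvee(T^{\des})$ precisely by the order-preservation coming from the fact that $\beta$ standardizes $\alpha$.
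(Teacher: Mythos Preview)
Your proof is correct and follows essentially the same approach as the paper's: both apply \autoref{lem:C-in-P} to decompose $\bigvee T$ into blocks according to $\beta$, use the order-preservation $\beta_\ell\leq\beta_i\Rightarrow\alpha_\ell\leq\alpha_i$ (coming from $\omega$ sorting $\alpha$) to control the entries of $\bigvee(T^{\des})$ in each block, and then invoke \autoref{lem:C-in-P} again. The only cosmetic difference is that the paper first strips away the zero positions of $\bigvee T$ before analyzing the blocks, whereas you handle zero entries directly in your pointwise case analysis; one small imprecision in your regrouping step is the phrase ``the $N_m$ indices $i$ with $\alpha_i=a_m$,'' which should be read as the $m$-th consecutive block of indices (since the $a_m$ need not be globally distinct), but this is clearly what you intend.
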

\begin{proof}
Let $\tau := \bigvee T$. Suppose the zero entries of $\tau$ occur in positions $z_1<\dots<z_M$. Then every atom in $T$ has zeros in positions $z_1<\dots<z_M$ (and potentially additional zeros), which implies that every atom in $T^{\des}$ has zeros in positions $z_1<\dots<z_M$ as well. Therefore, restricting attention to the positions $[n]\smallsetminus\{z_1,\dots,z_M\}$, we may suppose $\tau$ has no zero entries at all. 

By \autoref{lem:C-in-P}, we then know
\[
\tau=(\underbrace{\beta_1,\dots,\beta_1}_{\ell_1},\underbrace{\beta_2,\dots,\beta_2}_{\ell_2},\dots,\underbrace{\beta_N,\dots,\beta_N}_{\ell_N})
\]
with each $\ell_i>0$. This implies that in the positions of the interval $[1+\sum_{i=1}^{j-1}\ell_i,\sum_{i=1}^{j}\ell_i]$, the only allowable entries for atoms in $T$ are $1,2,\dots,\beta_j$; moreover at every position in this range, there exists some atom $\theta \in T$ whose value at this position is $\beta_j$. By semistandardization, we learn that in positions $[1+\sum_{i=1}^{j-1}\ell_i,\sum_{i=1}^{j}\ell_i]$, the only allowable entries for atoms in $T^{\des}$ are $\alpha_{\omega(1)},\alpha_{\omega(2)},\dots,\alpha_{\omega(\omega^{-1}(j))}=\alpha_j$; moreover at every position in this range, there exists some atom $\theta^{\des} \in T^{\des}$ whose value at this position is $\alpha_j$. Since \[
\alpha_{\omega(1)}\leq \alpha_{\omega(2)}\leq\dots\leq \alpha_j,
\]
 it follows that 
\[
\bigvee(T^{\des})=(\underbrace{\alpha_1,\dots,\alpha_1}_{\ell_1},\underbrace{\alpha_2,\dots,\alpha_2}_{\ell_2},\dots,\underbrace{\alpha_N,\dots,\alpha_N}_{\ell_N}),
\]
which is in $\cC_\alpha$ by \autoref{lem:C-in-P}.
\end{proof}

We now turn finally to proving \autoref{prop:mu=0-on-non-Cs}.

\begin{proof}[{Proof of \autoref{prop:mu=0-on-non-Cs}}]
Fix $\sigma \in\cP_\alpha\smallsetminus\cC_\alpha$. Let $\cR$ be the collection of all subsets $R \subset\cS_\alpha$ such that $\bigvee R=\sigma$. As in the proof of \autoref{l:mu=0-on-non-Cs-distinct}, it suffices to construct a fixed-point-free involution
\[
\jmath\colon\cR\to\cR
\]
such that $|\jmath(R)|=|R|\pm1$ for all $R \in \cR$. Note that if $\bigvee(R^{\st})\in\cC_\beta$, then \autoref{l:bigveedesinC} shows $\bigvee((R^{\st})^{\des})=\bigvee R=\sigma \in \cC_\alpha$, contradicting the hypothesis on $\sigma$. Therefore
\[
\tau_R:=\bigvee(R^{\st})\in\cP_\beta\smallsetminus\cC_\beta.
\]
For each $R \in \cR$, let $\cT_R=\{T\subseteq \cS_\beta\mid \bigvee T=\tau_R\}$ and let
\[
\iota_R\colon\cT_R\to\cT_R
\]
denote the involution constructed in the proof of \autoref{l:mu=0-on-non-Cs-distinct}. We define
\[
\jmath(R):=(\iota_R(R^{\st}))^{\des}.
\]

We first show $\jmath(R)\in\cR$, i.e., $\bigvee\jmath(R)=\bigvee R$. For this, we recall from the proof of \autoref{l:mu=0-on-non-Cs-distinct} that associated to $R^{\st}$, we have two distinct atoms $u_\pm\in R^{\st}$. We then form a third atom $u$ by splicing together partial data from $u_-$ and $u_+$; specifically, there is an index $I$ such that $u$ agrees with $u_-$ at all positions $i\leq I$ and agrees with $u_+$ at all positions $i>I$. Then $\iota_R(R^{\st})=R^{\st}\cup\{u\}$ if $u\notin R^{\st}$, and $\iota_R(R^{\st})=R^{\st}\smallsetminus\{u\}$ if $u\in R^{\st}$. Therefore, we see that $\jmath(R)=R\cup\{u^{\des}\}$ if $u^{\des}\notin R$, and $\jmath(R)=R\smallsetminus\{u^{\des}\}$ if $u^{\des}\in R$. By the construction of $u$, we see $u^{\des}$ agrees with $u_-^{\des}$ at all positions $i\leq I$ and agrees with $u_+^{\des}$ at all positions $i>I$. So, $u^{\des}\vee u_-^{\des}\vee u_+^{\des}=u_-^{\des}\vee u_+^{\des}$, which implies $\bigvee\jmath(R)=\bigvee R$.

Next, since $\st$ and $\des$ are mutually-inverse bijections of atoms, we have
\[
|\jmath(R)| = |(\iota_R(R^{\st}))^{\des}|=|\iota_R(R^{\st})|=|R^{\st}|\pm1=|R|\pm1.
\]

Finally, we must show $\jmath$ is an involution. Directly from the definitions, we see
\[
\jmath(\jmath(R))=(\iota_{R'}\iota_R(R^{\st}))^{\des},
\]
where $R'=(\iota_R(R^{\st}))^{\des}$. Since
\[
\tau_{R'}:=\bigvee(R')^{\st}=\bigvee \iota_R(R^{\st})=\bigvee R^{\st}=:\tau_R,
\]
we see that $\cT_R=\cT_{R'}$. Furthermore, since $\iota_R$ (respectively $\iota_{R'}$) depends only on $\tau_R$ (respectively $\tau_{R'}$), we see
\[
\jmath(\jmath(R))=(\iota_R\iota_R(R^{\st}))^{\des}=(R^{\st})^{\des}=R,
\]
as desired.
\end{proof}

\subsection{Proof of \autoref{thm:two-polys-agree}}

Having shown in \autoref{prop:mu=0-on-non-Cs} that $\mu_{\cP_\alpha}$ vanishes away from $\cC_\alpha$, it remains to compute $\mu_{\cP_\alpha}$ on the subset $\cC_\alpha$.

\begin{lemma}
\label{l:muneq0-equals-muprime}
If $\sigma\in\cC_\alpha$, then $\mu_{\cP_\alpha}(\sigma)=\mu'_{\cC_\alpha}(\sigma)$. As in \autoref{lem:C-in-P}, let $\sigma=\sigma_1\,^\frown\dots^\frown\sigma_k$ where each $\sigma_i$ is a string containing at least $N_i$ instances of $a_i$ with all other symbols being $0$. If $\sigma_i$ has exactly $\ell_i$ entries equal to $a_i$, then 
\[
\mu_{\cP_\alpha}(\sigma)=(-1)^{\ell_1+\dots+\ell_k-N}\prod_{i=1}^k {\ell_i-1\choose N_i-1}=\mu'_{\cC_\alpha}(\sigma).
\]
\end{lemma}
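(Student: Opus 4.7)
I would handle the two equalities separately.

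For the explicit formula $\mu'_{\cC_\alpha}(\sigma)=(-1)^{L-N}\prod\binom{\ell_i-1}{N_i-1}$ (with $L:=\sum\ell_i$), I would directly enumerate $\pi^{-1}(\sigma)$. By the analysis in the proof of \autoref{lem:C-in-P}, each $\widetilde{\sigma}\in\pi^{-1}(\sigma)$ factors as $\widetilde{\sigma}=\widetilde{\sigma}_1\,^\frown\cdots^\frown\widetilde{\sigma}_k$, where $\widetilde{\sigma}_i$ is obtained from $\sigma_i$ by barring exactly $\ell_i-N_i$ of its $a_i$-entries, subject to the constraint that the leftmost $a_i$-entry must remain unbarred (since move (M.2) can only create a $\bar{a}_i$ immediately to the right of an unbarred $a_i$). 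This gives $\binom{\ell_i-1}{N_i-1}$ valid patterns per block, and every preimage contributes the common sign $(-1)^{L-N}$, yielding the product formula.

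For the identity $\mu_{\cP_\alpha}(\sigma)=\mu'_{\cC_\alpha}(\sigma)$, I would induct on $L-N\geq 0$. The base case $L=N$ forces $\sigma$ to be an atom of $\cP_\alpha$, so both sides equal $1$. For the inductive step, \autoref{prop:mu=0-on-non-Cs} combined with the defining recursion \eqref{eq:mobius} reduces the task to the combinatorial identity $\sum_{p\leq\sigma,\,p\in\cC_\alpha}\mu'_{\cC_\alpha}(p)=1$. Expanding each $\mu'_{\cC_\alpha}(p)$ as a signed sum over preimages and swapping the order of summation turns this into
\[
\sum_{\substack{\widetilde{p}\in\widetilde{\cC}_\alpha\\ \pi(\widetilde{p})\leq\sigma}}(-1)^{\barred(\widetilde{p})}=1,
\]
which I would prove by a sign-reversing involution on the indexing set with exactly one fixed point, heuristically the atom of $\cS_\alpha$ that places each $a_i$-letter at the rightmost position compatible with $\sigma$.

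The main obstacle is the explicit construction of the involution: the toggling move---which swaps a barred entry $\bar{a}_j$ for a $0$ or inserts a new bar adjacent to an unbarred letter---must simultaneously flip the parity of $\barred(\widetilde{p})$, preserve the bound $\pi(\widetilde{p})\leq\sigma$, and admit a unique fixed point. These three requirements interact delicately, particularly when the $a_i$'s are not pairwise distinct, because then the block decomposition of $p$ need not align with that of $\sigma$. I would expect to handle this by first reducing to the distinct-value case via a variant of the standardization/semistandardization bijections developed before \autoref{l:bigveedesinC}, mirroring the strategy of \autoref{prop:mu=0-on-non-Cs}, and then defining the involution in the distinct case by toggling at the leftmost admissible position, in the spirit of the involutions constructed in the proofs of \autoref{l:mu=0-on-non-Cs-distinct} and \autoref{prop:mu=0-on-non-Cs}.
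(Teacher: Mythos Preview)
Your treatment of the first equality $\mu'_{\cC_\alpha}(\sigma)=(-1)^{L-N}\prod_i\binom{\ell_i-1}{N_i-1}$ is exactly the paper's, and your reduction of the second equality to the identity $\sum_{p\leq\sigma,\,p\in\cC_\alpha}\mu'_{\cC_\alpha}(p)=1$ also matches the paper's strategy (the paper phrases it as showing that the extended $\mu'_{\cC_\alpha}$ satisfies the M\"obius recurrence on all of $\cP_\alpha$, but this is the same thing).

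Where you diverge is in how to verify this identity. You propose unpacking $\mu'_{\cC_\alpha}(p)$ back into a signed sum over $\pi^{-1}(p)$ and building a sign-reversing involution on $\{\widetilde{p}\in\widetilde{\cC}_\alpha:\pi(\widetilde{p})\leq\sigma\}$, with the attendant complications you flag. The paper instead goes forward rather than backward: having \emph{just computed} the closed form for $\mu'_{\cC_\alpha}$, it substitutes that formula directly. The elements $p\leq\sigma$ in $\cC_\alpha$ are parametrised by choosing, for each block $i$, a $j_i$-element subset of the $\ell_i$ positions carrying $a_i$ (with $N_i\leq j_i\leq\ell_i$); plugging in gives
\[
\sum_{j_1,\dots,j_k}(-1)^{\sum j_i-N}\prod_i\binom{j_i-1}{N_i-1}\binom{\ell_i}{j_i}
\;=\;\prod_i\left(\sum_{j=N_i}^{\ell_i}(-1)^{j-N_i}\binom{j-1}{N_i-1}\binom{\ell_i}{j}\right),
\]
so everything reduces to the single-block identity $\sum_{j=N}^{\ell}(-1)^{j-N}\binom{j-1}{N-1}\binom{\ell}{j}=1$, which the paper dispatches in two lines via Pascal's rule and the binomial theorem. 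This completely bypasses any involution construction and, in particular, the standardization manoeuvre you anticipated needing for the non-distinct case. Your bijective route is a legitimate alternative and would give a more combinatorial proof, but it is substantially harder to execute and your sketch leaves the involution unspecified; the paper's algebraic shortcut is both simpler and complete. (One small point: inducting on $L-N$ is not quite right, since $p<\sigma$ in $\cC_\alpha$ need not have smaller $L$; induct on the poset order instead.)
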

\begin{proof}
As in \autoref{lem:C-in-P}, if $\sigma\in\cC_\alpha$ and $\widetilde{\sigma}\in\pi^{-1}(\sigma)$, then $\widetilde{\sigma}$ has the form $\widetilde{\sigma}=\widetilde{\sigma}_1\,^\frown\dots^\frown\widetilde{\sigma}_k$, where each $\widetilde{\sigma}_i$ has $\ell_i\geq N_i$ elements from $\{a_i, \bar{a}_i\}$ and exactly $\ell_i-N_i$ must be barred. Moreover, the first nonzero entry of $\widetilde{\sigma}_i$ must be unbarred, and all subsets with size $\ell_i-N_i$ of the remaining $\ell_i-1$ nonzero entries can occur barred in $\widetilde{\sigma}_i$ for some $\widetilde{\sigma} \in \pi^{-1}(\sigma)$. Thus, there are ${\ell_i-1\choose \ell_i-N_i}={\ell_i-1\choose N_i-1}$ possibilities for $\widetilde{\sigma}_i$ and each of these has $\barred(\widetilde{\sigma}_i)=\ell_i-N_i$. Thus,
\[
\mu'_{\cC_\alpha}(\sigma)=(-1)^{\ell_1+\dots+\ell_k-N}\prod_{i=1}^k {\ell_i-1\choose N_i-1}.
\]

It remains to prove that $\mu_{\cP_\alpha}(\sigma)$ is also computed by the same product of binomial coefficients. Extend the domain of the function $\mu'_{\cC_\alpha}$ from $\cC_\alpha$ to $\cP_\alpha$ by declaring that $\mu'_{\cC_\alpha}$ is zero on $\cP_\alpha\smallsetminus\cC_\alpha$. We will show $\mu_{\cP_\alpha}(\theta)=\mu'_{\cC_\alpha}(\theta)$ for all $\theta\in\cP_\alpha$; by \autoref{prop:mu=0-on-non-Cs}, we know this is the case if $\theta \in \cP_\alpha\smallsetminus\cC_\alpha$. To prove $\mu_{\cP_\alpha}(\theta)=\mu'_{\cC_\alpha}(\theta)$ for all $\theta$, it suffices to show by induction that every $\theta \in \cP_\alpha$ satisfies
\begin{equation}\label{eqn:inductivemuprime}
\sum_{\tau\leq \theta}\mu'_{\cC_\alpha}(\tau)=1,
\end{equation}
so that $\mu'_{\cC_\alpha}$ and $\mu_{\cP_\alpha}$ satisfy the same recurrence and initial conditions.

First, suppose the zero entries of $\theta$ occur in positions $z_1<\dots<z_M$. Then note that if $\tau\leq \theta$, then $\tau$ must also have zeros in positions $z_1<\dots<z_M$. Restricting attention to the positions $[n]\smallsetminus\{z_1,\dots,z_M\}$, we may therefore suppose $\theta$ has no zero entries at all. Next, in order to prove that Equation~\eqref{eqn:inductivemuprime} holds, it suffices to prove that
 \[
\sum_{\substack{\tau\leq \theta \\ \tau\in\cC_\alpha}}\mu'_{\cC_\alpha}(\tau)=1,
\]
 since for $\tau \notin \cC_\alpha$, we know $\mu'_{\cC_\alpha}(\tau)=0$ by \autoref{prop:mu=0-on-non-Cs}.

To handle the base case, assume $\theta \in \cS$ is minimal. Then $\theta$ is obtained from $\alpha$ by inserting $n-N$ zero entries. Therefore, $\theta$ has precisely $\ell_i=N_i$ entries equal to $a_i$, so $\mu'_{\cC_\alpha}(\theta)=(-1)^{N-N}\prod_{i=1}^k {N_i-1\choose N_i-1}=1=\mu_{\cP_\alpha}(\theta)$.

We next handle the induction step. Suppose that $\theta$ has $\ell_i$ entries equal to $a_i$. Then any $\tau\leq \theta$ with $\tau \in \cC_\alpha$ is obtained from $\theta$ by picking some $j_i$ with $N_i\leq j_i\leq\ell_i$, choosing $j_i$ of the $a_i$-entries, and replacing the $\ell_i-j_i$ remaining $a_i$-entries with zeros. Thus, proving Equation~\eqref{eqn:inductivemuprime} amounts to showing
\begin{equation}\label{eqn:binomial-identity-we-need-to-show}
\sum_{N_i\leq j_i\leq\ell_i}(-1)^{j_1+\dots+j_k-N}\prod_{i=1}^k {j_i-1\choose N_i-1}{\ell_i\choose j_i}=1.
\end{equation}

Since 
\[
\sum_{N_i\leq j_i\leq\ell_i}(-1)^{j_1+\dots+j_k-N}\prod_{i=1}^k {j_i-1\choose N_i-1}{\ell_i\choose j_i}
\quad=\quad 
\prod_{i=1}^k\sum_{N_i\leq j_i\leq\ell_i}(-1)^{j_i-N_i}{j_i-1\choose N_i-1}{\ell_i\choose j_i},
\]
it suffices to prove Equation~\eqref{eqn:binomial-identity-we-need-to-show} in the case where $k=1$. That is to say, we will prove the identity
\begin{equation}\label{eq:binomial_identity}
\sum_{j=N}^\ell(-1)^{j-N} {j-1\choose N-1}{\ell\choose j}=1
\end{equation}
by induction on $\ell$. When $\ell=N$, we see that Equation~\eqref{eq:binomial_identity} holds. For $\ell>N$, we use the Pascal's triangle identity ${\ell\choose j}={\ell-1\choose j-1}+{\ell-1\choose j}$ to rewrite the left side of Equation~\eqref{eq:binomial_identity} as 
\begin{equation}\label{eq:pascal}
\sum_{j=N}^\ell(-1)^{j-N} {j-1\choose N-1}{\ell-1\choose j-1} \quad +\quad \sum_{j=N}^{\ell-1}(-1)^{j-N} {j-1\choose N-1}{\ell-1\choose j}.
\end{equation}
By the inductive hypothesis, the second sum of \eqref{eq:pascal} equals $1$, so it remains to show that the first sum of \eqref{eq:pascal} vanishes. For this, we rewrite it as 
\[
\sum_{j=N}^\ell(-1)^{j-N} {j-1\choose N-1}{\ell-1\choose j-1}\ =\ {\ell-1\choose N-1}\sum_{j=N}^\ell(-1)^{j-N}{\ell-N\choose j-N}\ =\  {\ell-1\choose N-1}(1-1)^{\ell-N}\ =\ 0.\qedhere
\]
\end{proof}

We now prove the main theorem of this section.

\begin{proof}[{Proof of \autoref{thm:two-polys-agree}}]
\autoref{prop:mu=0-on-non-Cs} shows that if $\sigma \in\cP_\alpha\smallsetminus\cC_\alpha$, then $\mu_{\cP_\alpha}(\sigma)=0$. On the other hand, by \autoref{l:muneq0-equals-muprime}, if $\sigma \in\cC_\alpha$, then $\mu'_{\cC_\alpha}(\sigma)=\mu_{\cP_\alpha}(\sigma)$. So, by definition, we then have \[
\MK_\alpha^\cC(y_1,\dots,y_n) = \MK_\alpha(y_1,\dots,y_n),\]
 as desired.

Lastly, suppose $\sigma,\sigma'\in\cP_\alpha$ are such that $\sigma^+ = (\sigma')^+$, so that they differ only in the location of zeros. Then \autoref{lem:C-in-P} shows that $\sigma \in\cC_\alpha$ if and only if $\sigma'\in\cC_\alpha$, and moreover, that $|\pi^{-1}(\sigma)|=|\pi^{-1}(\sigma')|$. 
Therefore, the coefficients in $\MK_\alpha^\cC(y_1,\dots,y_n)$ of $y^\sigma$ and $y^{\sigma'}$ are equal.
It follows then that $\MK_\alpha^\cC(y_1,\dots,y_n)$ is a quasisymmetric polynomial. Since $\MK_\alpha(y_1,\dots,y_n) = \MK_\alpha^\cC(y_1,\dots,y_n)$, the polynomial $\MK_\alpha(y_1,\dots,y_n)$ is also quasisymmetric.
\end{proof}

\begin{remark}\label{rem:LP}
	T.~Lam and P.~Pylyavskyy introduced inhomogeneous deformations of monomial quasisymmetric functions in \cite[Remark~5.14]{Lam.Pylyavskyy}. They refer to these power series as \emph{multimonomial quasisymmetric functions} $\tilde{M}_\alpha$ and consider them to be ``$K$-theoretic'' through analogy with certain other inhomogeneous power series related to the $K$-theory of Grassmannians. To our knowledge, multimonomial quasisymmetric functions have never been studied outside of that remark; in particular, no explicit connection between multimonomial quasisymmetric functions and $K$-theoretic geometry has been obtained.
	
	The multimonomial quasisymmetric functions $\tilde{M}_\alpha$ are similar to, but different from our monomial quasisymmetric glides $\MK_\alpha$. (There appears to be a typo in the definition of \cite[Remark~5.14]{Lam.Pylyavskyy}; assuming that the lowest-degree piece of $\tilde{M}_\alpha$ should be the monomial quasisymmetric function, as the authors suggest, the ``$\subset$'' in their definition should be ``$\supset$''; otherwise, the power series $\MK_\alpha$ and $\tilde{M}_\alpha$ are even more different.) For example, suppose that $\alpha = (1,3)$. Then, truncating to four variables, we have 
		\begin{align*}
		\MK_{(1,3)}(y_1,y_2,y_3,y_4) 		= \y^{0013} &+ \y^{0103} + \y^{0130} + \y^{1003} + \y^{1030} + \y^{1300} - \y^{0113} - \y^{1103} - \y^{1013} \\
		&-\y^{1130} + \y^{1113} - \y^{1330} - \y^{0133} - \y^{1033} - \y^{1303} + \y^{1133} + \y^{1333}
	\end{align*}
	(as in \autoref{ex:polys}), while 
		\begin{align*}
		\tilde{M}_{(1,3)}(y_1,y_2,y_3,y_4) 		= \y^{0013} &+ \y^{0103} + \y^{0130} + \y^{1003} + \y^{1030} + \y^{1300} - \y^{1310} - \y^{1301} - \y^{1220} \\
		&-\y^{1202} -2 \y^{1130} - 2\y^{1103} - \y^{1031} - \y^{1022} - 2\y^{1013} - \y^{0131} - \y^{0122} \\
		&-2\y^{0113} + \y^{1311} + \y^{1221} + \y^{1212} + 2\y^{1131} + 2\y^{1122} + 3\y^{1113}.
	\end{align*}
\end{remark}

\begin{remark}
	Following \cite{Assaf.Searles}, there has been a flurry of recent work on lifting important bases of $\QSym_n$ to the full polynomial ring $\Z[y_1, \dots, y_n]$ (see, e.g., \cite{Assaf.Searles:kohnert,Pechenik.Searles, Pechenik.Searles:survey, Mason.Searles}).
	
	In this vein, we note that the quasisymmetric monomial glides $\MK_\alpha(y_1, \dots, y_n)$ have a natural such lift. Specifically, for a weak composition $a=(a_1, \dots, a_n)$, one may define the \newword{monomial glide} $\MK_a(y_1, \dots, y_n)$ to be the generating function for strings obtained from $a$ via the local moves (M.1) and (M.2). That is, letting $\cC_a$ denote the set of such obtainable strings, we define
	\[
	\MK_a(y_1, \dots, y_n) = \sum_{\sigma \in \cC_a} \mu'_{\cC_a}(\sigma) \y^\sigma.
	\]
	Note that this definition recovers the quasisymmetric monomial glides, as we have \[
	\MK_\alpha(y_1, \dots, y_n) = \MK_{({0^{n-\ell(\alpha)}})^\frown \alpha }(y_1, \dots, y_n);
	\]
	moreover, the monomial glides that are quasisymmetric are exactly those of this form. We do not pursue these generalizations further in this paper.	
\end{remark}

\section{The $K$-theory of the James reduced product}
\label{sec:K-thy-james-reduced-prod}

If $X$ is a complex algebraic variety, we let $K^0(X)$ be its Grothendieck group of algebraic vector bundles and $K_0(X)$ be its Grothendieck group of coherent sheaves. When $Y$ is a CW complex, we let $K(Y)$ denote the representable topological complex $K$-theory, that is to say homotopy classes of maps from $Y$ to $BU \times \Z$. For particularly nice spaces, these three groups can all be identified with each other; however, in general, they are all distinct and we will need to consider some slightly subtle interactions among them.

\begin{remark}
\label{rmk:different-K-thys}
Another definition of ``topological $K$-theory'' one might consider here is $K_{\rm top}(Y)$, the Grothendieck group of topological vector bundles. For compact CW complexes, we have a natural isomorphism $K_{\rm top}(Y) \cong K(Y)$. However, these two notions may differ even for simple examples of infinite CW complexes, see e.g.~\cite{JackowskiOliver96}; in fact, the functor $K_{\rm top}$ does not even satisfy Bott periodicity. One of the key properties we use about $K$ is that if $Y$ is the union of an expanding sequence of compact subspaces $Y_n$, then the natural map
\begin{equation}\label{eqn:repKequalsinvK}
K(Y)\xrightarrow{\cong}\lim K(Y_n)
\end{equation}
is an isomorphism, where the limit is taken in the category of commutative rings; see \cite[Corollary 1]{Buhstaber68}. Furthermore, the limit is independent of the choice of subspaces $Y_n$ by the remark after \cite[Proposition 4.1]{AtiyahSegal69}.
\end{remark}

Throughout, we denote the integral cohomology of a CW complex $Y$ by $H^*(Y)$, and for any ring $R$, we use $R_\Q$ to denote $R\otimes_\Z\Q$. For a finite CW complex, we have a \emph{Chern character map}
\[
\ch\colon K(Y)\to H^*(Y)_\Q.
\]
Similarly, if $X$ is a complex variety, we have a \emph{Chern character map}
\[
\ch\colon K^0(X)\to A^*(X)_\Q,
\]
where $A^*$ is the \emph{Chow ring}. When $X$ is a complex projective variety, there is a natural map
\[
K^0(X)\to K(X)
\]
taking an algebraic vector bundle to its underlying topological vector bundle. 

\subsection{Characterizing $K(J\C\bP^\infty)\,\widehat{\otimes}_\Z\,\Q$ as quasisymmetric functions} In what follows we will need to compute $K(J\C\bP^\infty)$ in terms of the $K(J_n\C\bP^m)$. Unfortunately, \eqref{eqn:repKequalsinvK} only allows one to compute $K$-theory as an inverse limit of a \emph{sequence} of CW complexes, as opposed to a doubly infinite sequence. The following result shows that, in our case of interest, \eqref{eqn:repKequalsinvK} remains true for doubly infinite sequences as well.

\begin{lemma}\label{l:repKequalsinvK-double-seq}
Consider the diagram
\[
\xymatrix@C=1.25em@R=1.25em{
\vdots & \vdots &\\
Y_{1,0}\ar@{}[r]|-*[@]{\subset} \ar@{}[u]|-*[@]{\subset} & Y_{1,1} \ar@{}[r]|-*[@]{\subset} \ar@{}[u]|-*[@]{\subset} & \cdots\\
Y_{0,0}\ar@{}[r]|-*[@]{\subset}\ar@{}[u]|-*[@]{\subset} & Y_{0,1} \ar@{}[r]|-*[@]{\subset}\ar@{}[u]|-*[@]{\subset} & \cdots
}
\]
where each $Y_{n,m}$ is a finite CW complex with only even-dimensional cells and all subsets are inclusions of subcomplexes. Let $Y=\bigcup_{n,m} Y_{n,m}$ and assume $Y_{n,m}\cap Y_{n',m'}=Y_{\min(n,n'),\min(m,m')}$. Then
\[
K(Y)=\lim_{n,m}K(Y_{n,m})
\]
where the limit is taken in the category of commutative rings.
\end{lemma}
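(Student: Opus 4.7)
The strategy is to reduce the doubly-indexed inverse limit to the singly-indexed one already available from the result of Buhstaber cited in \autoref{rmk:different-K-thys}, by passing to the diagonal.

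First, I would set $Z_k := Y_{k,k}$. The assumed subcomplex inclusions in the diagram give $Z_k = Y_{k,k} \subset Y_{k+1,k} \subset Y_{k+1,k+1} = Z_{k+1}$, so the $Z_k$ form an expanding sequence of finite (hence compact) subcomplexes. Moreover, for any $(n,m)$, setting $k = \max(n,m)$, the two chains of inclusions give $Y_{n,m} \subset Y_{k,m} \subset Y_{k,k} = Z_k$, so
\[
Y \;=\; \bigcup_{n,m} Y_{n,m} \;=\; \bigcup_k Z_k.
\]
Since each $Z_k$ is a finite CW complex with only even-dimensional cells, the cited Buhstaber result yields
\[
K(Y) \;\cong\; \lim_k K(Z_k) \;=\; \lim_k K(Y_{k,k})
\]
in the category of commutative rings.

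It then remains to identify this diagonal limit with the doubly-indexed limit $\lim_{n,m} K(Y_{n,m})$. This is a standard cofinality argument: the poset $\N \times \N$ under componentwise ordering is directed, and the diagonal $\{(k,k) : k \in \N\}$ is a cofinal subset, since $(n,m) \leq (\max(n,m), \max(n,m))$. The inclusions $Y_{n,m} \hookrightarrow Y_{n',m'}$ for $(n,m) \leq (n',m')$ induce ring homomorphisms $K(Y_{n',m'}) \to K(Y_{n,m})$ which are functorial in $(n,m)$, so we indeed have an inverse system of commutative rings indexed by $\N^2$. The restriction of any inverse system to a cofinal subsystem has the same limit, giving
\[
\lim_{n,m} K(Y_{n,m}) \;=\; \lim_k K(Y_{k,k}) \;=\; K(Y).
\]

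The only real content is verifying the hypotheses of the Buhstaber result on the diagonal (compactness, expanding, exhausting $Y$) and confirming that the even-dimensional cell hypothesis is inherited by each $Z_k$ so that the cited isomorphism applies in the category of commutative rings rather than merely abelian groups. Both are immediate from the hypotheses, and the cofinality step is purely formal, so I do not expect any serious obstacle.
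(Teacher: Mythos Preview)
Your argument is correct and takes a genuinely different, more economical route than the paper. The paper chooses the exhausting sequence $Y'_r=\bigcup_{n+m\leq r}Y_{n,m}$, applies Buhstaber to get $K(Y)\cong\lim_r K(Y'_r)$, and then spends the bulk of the proof establishing $K(Y'_r)=\lim_{n+m\leq r}K(Y_{n,m})$ by an induction over finite lower sets $\cS\subset\Z_{\geq0}^2$, using the long exact sequence in $K$-theory together with the even-dimensional cells hypothesis (to kill odd $K$-groups) and the intersection hypothesis $Y_{n,m}\cap Y_{n',m'}=Y_{\min(n,n'),\min(m,m')}$ (to identify $Y_\cS\cap Y_{a,b}$ with some $Y_\cT$). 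Your diagonal choice $Z_k=Y_{k,k}$ already sits inside the doubly-indexed system, so Buhstaber plus cofinality of the diagonal in $\N^2$ finishes immediately. In particular, your proof never invokes the intersection hypothesis or the even-cells hypothesis directly; they are only needed insofar as they feed into the cited Buhstaber isomorphism, and the paper itself applies that isomorphism to the $Y'_r$ without appealing to either condition. So your argument actually shows the lemma holds under weaker hypotheses than stated.

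One small correction: your closing remark that the even-cells hypothesis is what makes the isomorphism hold ``in the category of commutative rings rather than merely abelian groups'' is off. The ring structure on $K$ and the fact that restriction maps are ring homomorphisms are automatic; even cells play no role there. Where even cells could matter is in killing a potential $\varprojlim^1 K^{-1}(Z_k)$ obstruction in the Milnor sequence, but since you are citing the paper's \autoref{rmk:different-K-thys} as a black box, this is already absorbed into that citation.
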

\begin{proof}
We know from \eqref{eqn:repKequalsinvK} that
\[
K(Y)=\lim_r K(Y'_r)
\]
where $Y'_r=\bigcup_{n+m\leq r} Y_{n,m}$. So it remains to prove $K(Y'_r)=\lim_{n+m\leq r} K(Y_{n,m})$. In fact, we show the following stronger result. Let $\cS\subseteq\Z^2_{\geq0}$ be a finite set with the property that if $(n,m)\in\cS$, $n'\leq n$, and $m'\leq m$, then $(n',m')\in\cS$. Letting $Y_\cS=\bigcup_{(n,m)\in\cS}Y_{n,m}$, we prove
\[
K(Y_\cS)=\lim_{(n,m)\in\cS}K(Y_{n,m})
\]
by induction on $|\cS|$. If $|\cS|=1$, then $\cS=\{(0,0)\}$ and there is nothing to prove. For the induction step, fix $\cS$ and $(a,b)\notin\cS$ such that $(a-1,b)\in\cS$ if $a>0$ and $(a,b-1)\in\cS$ if $b>0$. Let $\cS'=\cS\cup\{(a,b)\}$. Since the $Y_{n,m}$ are compact Hausdorff with only even-dimensional cells, the higher $K$-groups are concentrated in even degrees. As a result, long exact sequence on $K$-groups yields
\[
\xymatrix{
0\ar[r] & \widetilde{K}(Y_{\cS'}/Y_{a,b})\ar[r]\ar[d]^-{\cong} & K(Y_{\cS'})\ar[r]\ar[d] & K(Y_{a,b})\ar[r]\ar[d] & 0\\
0\ar[r] & \widetilde{K}(Y_{\cS}/(Y_{\cS}\cap Y_{a,b}))\ar[r] & K(Y_{\cS})\ar[r] & K(Y_{\cS}\cap Y_{a,b})\ar[r] & 0
}
\]
where the two rows are exact and $\widetilde{K}$ denotes the reduced $K$-groups. It follows that the righthand square of the diagram is cartesian, i.e., $K(Y_{\cS'})$ is the limit of $K(Y_{\cS})$, $K(Y_{a,b})$, and $K(Y_{\cS}\cap Y_{a,b})$ in the category of abelian groups. Since all morphisms in the righthand square are the natural pullback maps, they are also ring maps, and hence $K(Y_{\cS'})$ is the limit of $K(Y_{\cS})$, $K(Y_{a,b})$, and $K(Y_{\cS}\cap Y_{a,b})$ in the category of rings. Using our hypothesis that $Y_{n,m}\cap Y_{n',m'}=Y_{\min(n,n'),\min(m,m')}$, we see $Y_{\cS}\cap Y_{a,b}=Y_{\cT}$, where $\cT$ is the set of $(n,m)$ with $n\leq a-1$ and $m\leq b$, or $n\leq a$ and $m\leq b-1$. Applying the induction statement to $\cS$ and $\cT$, we have proved the result for $\cS'$.
\end{proof}

We now apply \autoref{l:repKequalsinvK-double-seq} to our case of interest:~the James reduced product of $\C\bP^\infty$.

\begin{corollary}\label{cor:KJCPinfty-double-seq-inverse-limit}
We have
\[
K(J\C\bP^\infty)\ \cong\ \lim_{n,m} K(J_n\bP^m)
\]
and
\[
K(J\C\bP^\infty)\,\widehat{\otimes}_\Z\,\Q\ \cong\ \lim_{n,m}(K(J_n\bP^m)_\Q).
\]
\end{corollary}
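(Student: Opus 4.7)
The plan is to apply \autoref{l:repKequalsinvK-double-seq} directly to the doubly-indexed diagram $Y_{n,m} := J_n\bP^m$ and then derive the second statement from the first by the definition of the completed tensor product.

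First I would verify the hypotheses of \autoref{l:repKequalsinvK-double-seq}. The space $\bP^m$ has the standard finite CW structure with a single cell $e_{2i}$ in each even dimension $0 \leq 2i \leq 2m$; by the discussion at the end of \autoref{sec:intro}, the quotient map $q_n\colon (\bP^m)^n \to J_n\bP^m$ endows $J_n\bP^m$ with a finite CW structure, and all of its cells are even-dimensional since all cells of $(\bP^m)^n$ are. Moreover, using the explicit indexing of cells of $J_n\bP^m$ by compositions $\alpha = (\alpha_1,\dots,\alpha_k)$ with $k \leq n$ and $\alpha_i \leq m$ (established in \autoref{sec:cohomology}), each of the natural inclusions $J_n\bP^m \hookrightarrow J_{n+1}\bP^m$ and $J_n\bP^m \hookrightarrow J_n\bP^{m+1}$ is manifestly the inclusion of a subcomplex. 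The same indexing of cells yields $J\C\bP^\infty = \bigcup_{n,m} J_n\bP^m$ and the intersection identity
\[
J_n\bP^m \cap J_{n'}\bP^{m'} = J_{\min(n,n')}\bP^{\min(m,m')},
\]
since each cell is determined by its indexing composition $\alpha$ and the two conditions ``$\ell(\alpha) \leq n$ and $\max_i \alpha_i \leq m$'' and ``$\ell(\alpha) \leq n'$ and $\max_i \alpha_i \leq m'$'' both hold if and only if the combined conditions with $\min(n,n')$ and $\min(m,m')$ hold. \autoref{l:repKequalsinvK-double-seq} then yields the first isomorphism.

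For the second isomorphism, I would invoke the fact that the completed tensor product $K(J\C\bP^\infty)\,\widehat{\otimes}_\Z\,\Q$ is defined (as indicated in the paragraph preceding \autoref{thm:main-K} in the introduction) as the inverse limit $\lim_{n,m} K(J_n\bP^m)_\Q$. Combined with the first isomorphism, this immediately gives the desired identification.

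The main obstacle is really just the careful bookkeeping for the intersection property; the rest is essentially formal once one has \autoref{l:repKequalsinvK-double-seq} in hand and has set up the combinatorial cell indexing in \autoref{sec:cohomology}. If one instead wanted to prove the $\Q$-statement without taking the completed tensor product as a definition, one would have to observe that each $K(J_n\bP^m)$ is a finitely generated abelian group (since $J_n\bP^m$ is a finite CW complex with only even cells, so $K(J_n\bP^m) \cong H^{\mathrm{even}}(J_n\bP^m;\Z)$ is free of finite rank) and then use that tensoring with $\Q$ is exact; however, this does not automatically commute with infinite inverse limits, which is precisely why the completed tensor product notation is used.
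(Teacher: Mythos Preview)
Your proposal is correct, and both halves are argued slightly differently from the paper.

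For the first isomorphism, you verify the intersection hypothesis of \autoref{l:repKequalsinvK-double-seq} by appealing to the combinatorial indexing of cells: $e_\alpha \subset J_n\bP^m$ if and only if $\ell(\alpha)\leq n$ and $\max_i\alpha_i\leq m$, so the intersection of two such subcomplexes is determined by the conjunction of these constraints. The paper instead gives a direct point-set argument, writing an element of $J_n\bP^m$ uniquely as a tuple $(x_1,\dots,x_k)$ with $k\leq n$ and each $x_i\in\bP^m\smallsetminus\{e_0\}$. Both arguments are valid; yours is arguably cleaner once the cell indexing from \autoref{sec:cohomology} is in hand.

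For the second isomorphism, you take the completed tensor product to be \emph{defined} as $\lim_{n,m} K(J_n\bP^m)_\Q$, citing the informal description in the introduction. The paper instead treats $K(J\C\bP^\infty)\,\widehat{\otimes}_\Z\,\Q$ as the completion of $K(J\C\bP^\infty)_\Q$ with respect to the topology coming from the first isomorphism, and then has to do work: it shows the transition maps $K(J_n\bP^m)\to K(J_{n'}\bP^{m'})$ are surjective (using that all cells are even-dimensional, so the relevant long exact sequences of $K$-groups split into short exact sequences), writes $K(J_n\bP^m)=K(J\C\bP^\infty)/I_{n,m}$, and then invokes flatness of $\Q$ over $\Z$ to identify $K(J\C\bP^\infty)_\Q/(I_{n,m}\otimes\Q)$ with $K(J_n\bP^m)_\Q$. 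Your last paragraph shows you are aware of this distinction; the paper simply chooses the more honest route and carries out the surjectivity-and-flatness argument rather than absorbing it into a definition.
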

\begin{proof}
The first statement follows immediately from \autoref{l:repKequalsinvK-double-seq} upon showing
\begin{equation}\label{intersection-of-JnPms}
(J_n\bP^m)\cap(J_{n'}\bP^{m'})=J_n\bP^{m'},
\end{equation}
where $n\leq n'$ and $m'\leq m$. Every element of $J_n\bP^m$ may be written in the form $(x_1,\dots,x_k)$ where $k\leq n$ and $x_i\in\bP^m\smallsetminus\{e_0\}$; similarly every element of $J_{n'}\bP^{m'}$ may be written in the form $(y_1,\dots,y_\ell)$ where $\ell\leq n'$ and $y_i\in\bP^{m'}\smallsetminus\{e_0\}$. Equality of these two elements implies $k=\ell$ and $x_i=y_i$. Thus, $\ell=k\leq n$ and $x_i=y_i\in\bP^{m'}$, proving \eqref{intersection-of-JnPms}.

To compute $K(J\C\bP^\infty)\,\widehat{\otimes}_\Z\,\Q$, we first note that for $n'\leq n$ and $m'\leq m$, the transition map $K(J_n\bP^m)\to K(J_{n'}\bP^{m'})$ is surjective; indeed, it is true more generally that if $X$ is any compact Hausdorff CW complex with only even-dimensional cells and $A\subset X$ is a subcomplex, then all higher $K$-groups are concentrated in even degrees and so the long exact sequence on $K$-groups shows $K(X)\to K(A)$ is surjective. Since the transition maps are surjective, the induced maps $K(J\C\bP^\infty)\cong\lim_{a,b} K(J_a\bP^b)\to K(J_n\bP^m)$ are surjective. We may therefore write $K(J_n\bP^m)=K(J\C\bP^\infty)/I_{n,m}$ where $I_{n,m}$ is an ideal. Then, by definition, 
\[
K(J\C\bP^\infty)\,\widehat{\otimes}_\Z\,\Q=\lim_{n,m}(K(J\C\bP^\infty)_\Q/(I_{n,m}\otimes_Z\Q))=\lim_{n,m}(K(J_n\bP^m)_\Q),
\]
where the second equality uses flatness of $\Z\to\Q$.
\end{proof}

Next, let $\pi_i\colon(\bP^m)^n\to\bP^m$ denote projection onto the $i$th factor and let
\begin{equation}\label{eqn:def-of-yi}
y_i=1-[\pi_i^*\cO_{\bP^m}(-1)].
\end{equation}
It is well-known (see, e.g., Theorems 2.2 and 4.1 of \cite{Sankaran}) that the map
\begin{equation}\label{eqn:top-K-thy-compared-with-alg-K-thy-Pnm}
K^0((\bP^m)^n)\xrightarrow{\cong}K((\bP^m)^n)
\end{equation}
is an isomorphism and that
\begin{equation}\label{eqn:K0Pmn-integral}
K^0((\bP^m)^n)=\Z[y_1,\dots,y_n]/(y_1^{m+1},\dots,y_n^{m+1}).
\end{equation}

The following proposition is the main result of this subsection.

\begin{proposition}
\label{prop:KJmCPn-is-qsym-in-y}
With notation as in \eqref{eqn:def-of-yi} and \eqref{eqn:K0Pmn-integral}, the map
\[
q_n^*\colon K(J_n\bP^m)_\Q\to K((\bP^m)^n)_\Q\cong K^0((\bP^m)^n)_\Q=\Q[y_1,\dots,y_n]/(y_1^{m+1},\dots,y_n^{m+1})
\]
is an injection which identifies $K(J_n\bP^m)_\Q$ with quasisymmetric polynomials in $y_1,\dots,y_n$. These injections yield a natural isomorphism
\[
\psi\colon K(J\C\bP^\infty)\,\widehat{\otimes}_\Z\,\Q\xrightarrow{\cong} \mQSymQ.
\]
\end{proposition}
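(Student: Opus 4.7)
The plan is to use the Chern character to reduce to the cohomology computation of \autoref{james-reduced-cells->monomial-basis}. Both $J_n\bP^m$ and $(\bP^m)^n$ are finite CW complexes with only even-dimensional cells, so the rational Chern character yields a natural ring isomorphism $\ch\colon K(Y)_\Q\xrightarrow{\cong}H^*(Y;\Q)$ for each of these spaces (e.g., via degeneration of the rational Atiyah--Hirzebruch spectral sequence). Applying naturality to $q_n$ gives a commutative square
\[
\xymatrix{
K(J_n\bP^m)_\Q \ar[r]^-{q_n^*} \ar[d]_-{\ch}^-{\cong} & K((\bP^m)^n)_\Q \ar[d]^-{\ch}_-{\cong} \\
H^*(J_n\bP^m; \Q) \ar[r]^-{q_n^*} & H^*((\bP^m)^n; \Q).
}
\]
By (an immediate adaptation to $\bP^m\subseteq\C\bP^\infty$ of) \autoref{james-reduced-cells->monomial-basis}, the bottom $q_n^*$ is injective with image the quasisymmetric polynomials in the hyperplane classes $h_1,\dots,h_n$ modulo $(h_1^{m+1},\dots,h_n^{m+1})$; therefore the top $q_n^*$ is injective with image $\ch^{-1}$ of this subring.

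The central computation is to show that this image coincides with the quasisymmetric polynomials in the $y$-variables modulo $(y_1^{m+1},\dots,y_n^{m+1})$. The Chern character sends $y_i=1-[\pi_i^*\cO(-1)]$ to $1-e^{-h_i}$, so $\ch$ acts on $\Q[y_1,\dots,y_n]/(y_i^{m+1})$ by the substitution $y_i\mapsto 1-e^{-h_i}$. Crucially, this substitution replaces each $y_i$ with a power series in $h_i$ alone and does not mix distinct variables, so it preserves quasisymmetry. Indeed, expanding
\[
M_\alpha(1-e^{-h_1},\dots,1-e^{-h_n})=\sum_{1\leq j_1<\dots<j_k\leq n}(1-e^{-h_{j_1}})^{\alpha_1}\cdots(1-e^{-h_{j_k}})^{\alpha_k},
\]
the coefficient of $h_{j_1}^{\beta_1}\cdots h_{j_k}^{\beta_k}$ depends only on $\alpha$ and $\beta$, not on the positions $j_i$; the inverse substitution $h_i=-\log(1-y_i)$ is similarly well-behaved, so the two notions of quasisymmetry correspond exactly under $\ch$.

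Finally, combining this identification with \autoref{cor:KJCPinfty-double-seq-inverse-limit} yields
\[
K(J\C\bP^\infty)\,\widehat{\otimes}_\Z\,\Q\ \cong\ \lim_{n,m}\bigl(\QSym_n(y_1,\dots,y_n)/(y_i^{m+1})\bigr),
\]
and a compatible system in this inverse limit is precisely the data of a quasisymmetric power series in $y_1,y_2,\dots$ with rational coefficients, i.e., an element of $\mQSymQ$; the transition maps ($y_{n+1}\mapsto 0$ on increasing $n$, and reduction modulo $y_i^{m+1}$ on decreasing $m$) correspond to the obvious truncations, both of which can be inverted from the coefficient data using quasisymmetry. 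The main obstacle is the preservation-of-quasisymmetry argument in the second paragraph; once that is established, injectivity is immediate from the cohomology case and the inverse-limit identification is formal.
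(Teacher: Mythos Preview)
Your proof is correct and follows essentially the same route as the paper: use the Chern character square to reduce to the cohomology computation of \autoref{james-reduced-cells->monomial-basis}, then argue that quasisymmetry in the $y_i$ matches quasisymmetry in the $h_i$ because $\ch$ acts by the variable-wise substitution $y_i\mapsto 1-e^{-h_i}$, and finally pass to the inverse limit via \autoref{cor:KJCPinfty-double-seq-inverse-limit}. The one small difference is in the converse direction of the quasisymmetry correspondence: you invoke the inverse substitution $h_i=-\log(1-y_i)$ (again a power series with no constant term, hence quasisymmetry-preserving), whereas the paper instead observes that the containment of $y$-quasisymmetric polynomials in $q_n^*K(J_n\bP^m)_\Q$ must be an equality by a $\Q$-dimension count; both arguments are short and valid.
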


\begin{proof}
Since the Chern character commutes with pullback, we have a commutative diagram
\[
\xymatrix{
K((\bP^m)^n)_\Q\ar[r]_-{\cong}^-{\ch} & H^*((\bP^m)^n)_\Q\\
K(J_n\bP^m)_\Q\ar[u]^-{q_n^*}\ar[r]_-{\cong}^-{\ch} & H^*(J_n\bP^m)_\Q\ar[u]_-{q_n^*}
}
\]
\autoref{james-reduced-cells->monomial-basis} and its proof show that, on cohomology, $q_n^*$ is injective and identifies $H^*(J_n\bP^m)_\Q$ with the subring of $H^*((\bP^m)^n)_\Q=\Q[x_1,\dots,x_n]/(x_1^{m+1},\dots,x_n^{m+1})$ consisting of quasisymmetric polynomials in the $x_i$. As a result, the $K$-theoretic pullback $q_n^*$ is injective and identifies $K(J_n\bP^m)_\Q$ with the subring of $K((\bP^m)^n)_\Q$ consisting of polynomials $f(y_1,\dots,y_n)$ for which $\ch(f)$ is quasisymmetric in the $x_i$.

Thus, given $f(y_1,\dots,y_n)\in K((\bP^m)^n)_\Q$, we must show the following two conditions are equivalent:
\begin{itemize}
	\item[(i)] $f$ is quasisymmetric in the $y_i$, and 
	\item[(ii)] $\ch(f)$ is quasisymmetric in the $x_i$.
\end{itemize}

To prove (i) implies (ii), first suppose $f(y_1,\dots,y_n)$ is a quasisymmetric power series. Let $g(x) \in \mathbb{Q} \llbracket x \rrbracket$ be any power series with no constant term. Then it is immediate from the definition of quasisymmetry that the composition $f(g(x_1),\dots,g(x_n))$ is also quasisymmetric.

Now, consider $f(y_1,\dots,y_n)\in K((\bP^m)^n)_\Q$; we show that $\ch(f)$ is quasisymmetric in the $x_i$. 
We have
\[
\ch(f)=f(1-e^{-x_1},\dots,1-e^{-x_n});
\]
this is because $x_i$ is the class of $\bP^m\times\dots\times\bP^m\times\bP^{m-1}\times\bP^m\times\dots\times\bP^m$ with $\bP^{m-1}$ appearing in the $i$-th factor, and so $-x_i=c_1(\pi_i^*\cO(-1))$. Since $1-e^{-x}$ is a power series with no constant term, it follows that $\ch(f)$ is quasisymmetric, establishing that (i) implies (ii).


Since (i) implies (ii), we see that the quasisymmetric polynomials in $y_i$ are contained in $q_n^*K(J_n\bP^m)_\Q$, the set of polynomials $f$ such that $\ch(f)$ is quasisymmetric.
 However, these two sets have the same dimension when viewed as $\Q$-vector spaces, since $\ch$ defines an isomorphism between $K(J_n\bP^m)_\Q$ and the vector space of quasisymmetric polynomials in the $x_i$. Thus, the sets coincide and (ii) implies (i), as well.

Lastly, having now shown that $K(J_n\bP^m)_\Q$ agrees with the quasisymmetric polynomials in the ring $\Q[y_1,\dots,y_n]/(y_1^{m+1},\dots,y_n^{m+1})$, we see from \autoref{cor:KJCPinfty-double-seq-inverse-limit} that
\[
K(J\C\bP^\infty)\,\widehat{\otimes}_\Z\,\Q\ \cong\ \lim_{n,m} (K(J_n\bP^m)_\Q)\ \cong\ \mQSymQ,
\]
where the limit is in the category of commutative rings.
\end{proof}

We end this subsection with the following well-known computation, which we make use of later.

\begin{lemma}
\label{l:KclassProdPn}
In $K^0(\bP^m)$, we have
\[
[\cO_{\bP^{r}}]=(1-[\cO_{\bP^m}(-1)])^{m-r}.
\]
Let $\pi_i\colon(\bP^m)^n\to\bP^m$ denote projection onto the $i$th factor and let \[
y_i=1-[\pi_i^*\cO_{\bP^m}(-1)]\in K^0((\bP^m)^n).
\]
Then, in $K^0((\bP^m)^n)$, we have
\[
[\cO_{\bP^{r_1}\times\dots\times\bP^{r_m}}]=\prod_{i=1}^n y_i^{m-r_i}.
\]
\end{lemma}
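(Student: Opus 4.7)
The plan is to handle the two statements in sequence. For the first, I would exhibit $\bP^r\subset\bP^m$ as the vanishing locus of $m-r$ generic linear forms $s_1,\dots,s_{m-r}\in H^0(\bP^m,\cO_{\bP^m}(1))$. Since this is a regular sequence, the associated Koszul complex gives a locally free resolution
\[
0\to\cO_{\bP^m}(-(m-r))\to\dots\to\cO_{\bP^m}(-1)^{\oplus (m-r)}\to\cO_{\bP^m}\to\cO_{\bP^r}\to 0,
\]
with $\binom{m-r}{i}$ copies of $\cO_{\bP^m}(-i)$ in homological degree $i$. Passing to $K^0(\bP^m)$ and using $[\cO_{\bP^m}(-i)]=[\cO_{\bP^m}(-1)]^i$ yields
\[
[\cO_{\bP^r}]=\sum_{i=0}^{m-r}(-1)^i\binom{m-r}{i}[\cO_{\bP^m}(-1)]^i=(1-[\cO_{\bP^m}(-1)])^{m-r}.
\]

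For the product formula, I would use that on $(\bP^m)^n$ the subvariety $\bP^{r_1}\times\dots\times\bP^{r_n}$ is the scheme-theoretic intersection $\bigcap_{i=1}^n \pi_i^{-1}(\bP^{r_i})$, and that these pullbacks are mutually transverse in the sense that the external Koszul complex $\bigotimes_i \pi_i^*(\text{Koszul}_i)$ is a locally free resolution of $\cO_{\bP^{r_1}\times\dots\times\bP^{r_n}}$. Equivalently, $\cO_{\bP^{r_1}\times\dots\times\bP^{r_n}}=\pi_1^*\cO_{\bP^{r_1}}\otimes\dots\otimes\pi_n^*\cO_{\bP^{r_n}}$ with all higher Tors vanishing, since each $\pi_i$ is flat and the resolutions live in independent factors. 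Therefore in $K^0((\bP^m)^n)$,
\[
[\cO_{\bP^{r_1}\times\dots\times\bP^{r_n}}]=\prod_{i=1}^n\pi_i^*[\cO_{\bP^{r_i}}]=\prod_{i=1}^n\pi_i^*(1-[\cO_{\bP^m}(-1)])^{m-r_i}=\prod_{i=1}^n y_i^{m-r_i},
\]
where the middle equality applies the first statement under the ring map $\pi_i^*$ and the last uses the definition of $y_i$.

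The only genuinely nontrivial input is the Tor-vanishing that makes the external tensor product compute $[\cO_{\bP^{r_1}\times\dots\times\bP^{r_n}}]$ as a plain (not derived) product in $K^0$; the Koszul resolution step is essentially a computation. I expect no real obstacle here, since flatness of each projection $\pi_i$ and the fact that the Koszul resolutions pulled back from distinct factors form a resolution of the intersection by a standard Künneth-type argument are all well-known.
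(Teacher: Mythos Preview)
Your argument is correct. The main difference from the paper is in how you establish the first identity: you write down the Koszul resolution for the regular sequence of $m-r$ linear forms cutting out $\bP^r$ and read off the alternating sum directly, whereas the paper proceeds by induction on $m-r$ using the short exact sequence $0\to\cO_{\bP^{r+1}}(-1)\to\cO_{\bP^{r+1}}\to\cO_{\bP^r}\to 0$ and then a binomial manipulation. Your approach is arguably slicker (one resolution instead of an induction), while the paper's is slightly more elementary in that it only ever uses a single hyperplane sequence at a time. For the product statement, the paper simply says ``the second statement follows immediately from the first,'' implicitly relying on the K\"unneth isomorphism $K^0((\bP^m)^n)\cong K^0(\bP^m)^{\otimes n}$; your explicit discussion of Tor-vanishing via the external tensor of pulled-back Koszul resolutions is a correct and more detailed justification of the same fact.
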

\begin{proof}
The second statement follows immediately from the first. 
To prove the first statement, we induct on $m-r$. For $r=m$, there is nothing to show. For $r<m$, we use the exact sequence
\[
0\to\cO_{\bP^{r+1}}(-1)\to\cO_{\bP^{r+1}}\to\cO_{\bP^r}\to0,
\]
which shows
\[
[\cO_{\bP^{r}}]=[\cO_{\bP^{r+1}}]-[\cO_{\bP^{r+1}}(-1)].
\]
By induction, we have
\[
[\cO_{\bP^{r}}] =\sum_{i=0}^{m-r-1}(-1)^i{m-r-1\choose i}[\cO_{\bP^m}(-i)]-\sum_{i=0}^{m-r-1}(-1)^i{m-r-1\choose i}[\cO_{\bP^m}(-(i+1))].
\]
A straightforward computation shows that this quantity simplifies to be
\[
[\cO_{\bP^{r}}] = 
\sum_{i=0}^{m-r}(-1)^i{m-r\choose i}[\cO_{\bP^m}(-i)]=\sum_{i=0}^{m-r}{m-r\choose i}(-[\cO_{\bP^m}(-1)])^i=(1-[\cO_{\bP^m}(-1)])^{m-r}.\qedhere
\]
\end{proof}

\subsection{$K$-classes are represented by quasisymmetric monomial glides}

Our first aim in this section is to associate a $K$-class to each cell $e_\alpha$ of $J\C\bP^\infty$.
Let $\alpha=(\alpha_1,\dots, \alpha_k)$. Then for any $n\geq k$ and $m\geq \max_i \alpha_i$, we see $e_\alpha$ is a cell of $J_n\bP^m$.
 If $J_n\bP^m$ were a smooth complex variety, we would want to associate $e_\alpha$ with the class of its structure sheaf. However, as discussed in \autoref{sec:not_normal} this approach is not viable, so we must substitute a more delicate construction.
 Although we cannot use the class of the structure sheaf on $J_n\bP^m$ itself, we can instead pull $e_\alpha$ back to the smooth variety $(\bP^m)^n$ via the quotient map $q_n\colon(\bP^m)^n\to J_n\bP^m$, and then consider the associated $K$-class of the closure of $q_n^{-1}(e_\alpha)$. It turns out that we get better behaviour under the necessary limits if we first take the Poincar\'e dual of the closure of $q_n^{-1}(e_\alpha)$ before taking the $K$-class; the reason for this is that we want the codimension of the $K$-class to be independent of the choice of $n$ and $m$. 

We can describe $\overline{q_n^{-1}(e_\alpha)}$ and its Poincar\'e dual explicitly as follows. Let $\cI_n$ be the set of order-preserving injections $\iota\colon [k] \to [n]$. For every $\iota\in\cI_n$, define $b(\iota)=(b_1,\dots,b_n)$ by 
\[
b_i = \begin{cases}
	\alpha_j, & \text{if } i = \iota(j); \\
	0, & \text{if } i \notin \im \iota
\end{cases}
\]
and let $r(\iota)=(r_1,\dots,r_n)$ be defined by $r_i=m-b(\iota)_i$. We set $\cB_{\alpha,n}=\{b(\iota)\mid\iota\in\cI_n\}$ and $\cR_{\alpha,n,m}=\{r(\iota)\mid\iota\in\cI_n\}$. Then
\[
\overline{q_n^{-1}(e_\alpha)}=\bigcup_{(b_1,\dots,b_n)\in\cB_{\alpha,n}}\bP^{b_1}\times\dots\times\bP^{b_n}
\]
and its Poincar\'e dual is given by 
\[
Z_{\alpha,n,m}=\bigcup_{(r_1,\dots,r_n)\in\cR_{\alpha,n,m}} \bP^{r_1}\times\dots\times\bP^{r_n}.
\]
We consider the class $[\cO_{Z_{\alpha,n,m}}]\in K^0((\bP^m)^n)_\Q$, where we use smoothness to resolve $\cO_{Z_{\alpha,n,m}}$ by locally-free sheaves to produce a class in $K^0$. Using then the natural isomorphism \eqref{eqn:top-K-thy-compared-with-alg-K-thy-Pnm}, we conflate this algebraic class with the topological class, which we also write as $[\cO_{Z_{\alpha,n,m}}]\in K((\bP^m)^n)_\Q$.

\begin{definitionlemma}\label{def:K-class-ealpha-n-m}
We define the \newword{$K$-class of $e_\alpha$} viewed as a cell of $J_n\bP^m$ to be the unique class $[e_\alpha]_{n,m}\in K(J_n\bP^m)_\Q$ that pulls back to $[\cO_{Z_{\alpha,n,m}}]\in K((\bP^m)^n)_\Q$ under the quotient map $q_n$.

We define the \newword{$K$-class of $e_\alpha$} viewed as a cell of $J\C\bP^\infty$ to be the unique class
\[
[e_\alpha]\in K(J\C\bP^\infty)\,\widehat{\otimes}_\Z\,\Q
\]
which pulls back to $[e_\alpha]_{n,m}$ for all $n,m$ sufficiently large.
\end{definitionlemma}
\begin{proof}
	The uniqueness of the class $[e_\alpha]_{n,m}\in K(J_n\bP^m)_\Q$ is immediate from the injectivity statement of \autoref{prop:KJmCPn-is-qsym-in-y}. The existence of $[e_\alpha]_{n,m}\in K(J_n\bP^m)_\Q$ will be established below in \autoref{thm:K-classesAgreewithGlidePolys}, which also then establishes the existence and uniqueness of the class $[e_\alpha]\in K(J\C\bP^\infty)\,\widehat{\otimes}_\Z\,\Q$.
\end{proof}

The following theorem completes the proof of \autoref{def:K-class-ealpha-n-m} and also establishes \autoref{thm:main-K}.

\begin{theorem}
\label{thm:K-classesAgreewithGlidePolys}
Let $\alpha=(\alpha_1,\dots, \alpha_k)$ be a composition. Let $n\geq k$ and $m\geq \max_i \alpha_i$ so that $e_\alpha$ is a cell of $J_n\bP^m$. Then the following hold.
\begin{enumerate}
\item\label{K-classesAgreewithGlidePolys::eKnm} $[\cO_{Z_{\alpha,n,m}}]\in K((\bP^m)^n)_\Q$ is in the image of $q_n^*\colon K(J_n\bP^m)_\Q\to K((\bP^m)^n)_\Q$, so determines a unique class
\[
[e_\alpha]_{n,m}\in K(J_n\bP^m)_\Q.
\]

\item\label{K-classesAgreewithGlidePolys::eK} If $N\geq n$ and $M\geq m$, then under the inclusion $\iota\colon J_n\bP^m\to J_N\bP^M$, we have
\[
[e_\alpha]_{n,m}=\iota^*[e_\alpha]_{N,M}.
\]
This system of equalities therefore determines a class
\[
[e_\alpha]\in \lim_{N,M}(K(J_N\bP^M)_\Q)=K(J\C\bP^\infty)\,\widehat{\otimes}_\Z\,\Q.
\]

\item\label{K-classesAgreewithGlidePolys::basis} The $[e_\alpha]$ form a Schauder basis for $K(J\C\bP^\infty)\,\widehat{\otimes}_\Z\,\Q$.

\item\label{K-classesAgreewithGlidePolys::main} Under the natural isomorphism 
\[
\psi\colon K(J\C\bP^\infty)\,\widehat{\otimes}_\Z\,\Q\xrightarrow{\cong} \mQSymQ
\]
from \autoref{prop:KJmCPn-is-qsym-in-y}, we have
\[
\psi([e_\alpha])=\MK_\alpha(y_1,y_2,\dots).
\]
\end{enumerate}
\end{theorem}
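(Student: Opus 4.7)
The plan is to prove all four parts via a single explicit calculation of $[\cO_{Z_{\alpha,n,m}}]$ in the ring $K((\bP^m)^n)_\Q \cong \Q[y_1,\ldots,y_n]/(y_1^{m+1},\ldots,y_n^{m+1})$. Specifically, I aim to establish the identity
\[
[\cO_{Z_{\alpha,n,m}}] \;=\; \MK_\alpha(y_1,\ldots,y_n)
\]
in this ring; the four parts then follow readily.

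For the main identity, I would use that $Z_{\alpha,n,m}$ is the union of the smooth subvarieties $\bP^{r_1}\times\cdots\times\bP^{r_n}$ indexed by $(r_1,\ldots,r_n) \in \cR_{\alpha,n,m}$. The intersection of two such subvarieties is again of the same form, with $r$-vector given by componentwise minimum. After substituting $b_i = m - r_i$, componentwise minimum becomes componentwise maximum, so the intersection lattice (with a minimum $\widehat{0}$ adjoined to account for the ambient $(\bP^m)^n$) is naturally isomorphic to $\widehat{\cP}_\alpha$, with atoms $\cS_\alpha$ identified with $\cB_{\alpha,n}$. Invoking the Möbius-inversion formula for the $K$-class of a union of smooth subvarieties from \cite{Allen}, and rewriting each $[\cO_{\bP^{r_1}\times\cdots\times\bP^{r_n}}]$ as $\y^b$ via \autoref{l:KclassProdPn}, I obtain
\[
[\cO_{Z_{\alpha,n,m}}] \;=\; \sum_{b \in \cP_\alpha} \mu_{\cP_\alpha}(b)\, \y^b
\]
in $K((\bP^m)^n)_\Q$, where the convention for $\mu_{\cP_\alpha}$ chosen in \autoref{sec:glides-via-Mobius} was precisely set up to agree with Allen's. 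By \autoref{thm:two-polys-agree}, this sum equals $\MK_\alpha(y_1,\ldots,y_n)$. The hypothesis $m \geq \max_i \alpha_i$ ensures $b_i \leq m$ for all $b \in \cP_\alpha$, so no terms vanish in the quotient modulo $(y_i^{m+1})$.

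With the identity established, the four parts follow quickly. Part~\eqref{K-classesAgreewithGlidePolys::eKnm} is immediate since $\MK_\alpha$ is quasisymmetric by \autoref{thm:two-polys-agree}, so lies in the image of $q_n^*$ by \autoref{prop:KJmCPn-is-qsym-in-y}; uniqueness was already noted from the injectivity in \autoref{prop:KJmCPn-is-qsym-in-y}. Part~\eqref{K-classesAgreewithGlidePolys::eK} is manifest from the formula: the inclusion $J_n\bP^m \hookrightarrow J_N\bP^M$ corresponds at the polynomial level to setting $y_{n+1} = \cdots = y_N = 0$ and reducing modulo $y_i^{m+1}$, and any $b \in \cP_\alpha$ with a nonzero entry beyond position $n$ contributes $0$ under this specialization. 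Part~\eqref{K-classesAgreewithGlidePolys::main} then follows by taking the inverse limit via \autoref{cor:KJCPinfty-double-seq-inverse-limit}. For Part~\eqref{K-classesAgreewithGlidePolys::basis}, each atom $b \in \cS_\alpha$ has total degree $\sum_i b_i = \sum_i \alpha_i$ and contributes with coefficient $+1$, while every non-atom $c \in \cP_\alpha$, being a componentwise maximum of distinct atoms, has strictly larger total degree. Hence $\MK_\alpha$ equals $M_\alpha$ plus strictly higher-degree terms; unitriangularity against the Schauder basis $\{M_\alpha\}$ of $\mQSymQ$ then shows that $\{\MK_\alpha\}$ is also a Schauder basis.

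The principal obstacle is the Möbius-inversion step: invoking Allen's formula requires verifying that the intersections of our $\bP^{r_1}\times\cdots\times\bP^{r_n}$ are scheme-theoretically well-behaved (they meet transversely factor-by-factor, being products of linear subspaces in a product of projective spaces), and confirming that the convention $\sum_{q \leq p}\mu_{\cP_\alpha}(q) = 1$ aligns exactly with Allen's sign convention. Identifying the intersection lattice with $\widehat{\cP}_\alpha$ is a secondary technicality, immediate from the substitution $b_i = m - r_i$.
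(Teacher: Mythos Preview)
Your proposal is correct and follows essentially the same route as the paper: reduce everything to the single identity $[\cO_{Z_{\alpha,n,m}}]=\MK_\alpha(y_1,\dots,y_n)$, establish it via Knutson's M\"obius formula from \cite{Allen} together with \autoref{l:KclassProdPn}, and then read off parts (1)--(4) from quasisymmetry, compatibility under specialization, and unitriangularity against $\{M_\alpha\}$.

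Two small points of precision worth noting. First, the sum $\sum_{b\in\cP_\alpha}\mu_{\cP_\alpha}(b)\,\y^b$ is $\MK_\alpha(y_1,\dots,y_n)$ by \emph{definition} (\autoref{def:two-polys}), not by \autoref{thm:two-polys-agree}; you need the latter only for quasisymmetry. Second, the paper frames the key step not as ``M\"obius inversion for a union of smooth subvarieties'' but specifically via Brion's degeneration for subvarieties with \emph{multiplicity-free} Chow class in the Schubert basis, which is the actual hypothesis of \cite[Theorem~3]{Allen}; the relevant poset is then the poset of Schubert varieties appearing in the degeneration, which here coincides with the closure of the components under intersection and hence with $\cP_\alpha$. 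This framework is what handles your ``principal obstacle'' cleanly, since reducedness and the correct scheme structure on the intersections come for free from Brion's theory rather than needing a separate transversality check.
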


\begin{proof}
Recall that under the isomorphism $\psi$, the element $y_i$ corresponds to $1-[\pi_i^*\cO(-1)]\in K((\bP^M)^N)_\Q$ for any $N\geq i$, where $\pi_i\colon(\bP^M)^N\to\bP^M$ is the projection map onto the $i$th factor. Thus, we will henceforth conflate $y_i$ and $1-[\pi_i^*\cO(-1)]$. To prove (\ref{K-classesAgreewithGlidePolys::eKnm}), (\ref{K-classesAgreewithGlidePolys::eK}), and (\ref{K-classesAgreewithGlidePolys::main}), we claim it is enough to show
\begin{equation}\label{eqn:main-n-m}
[\cO_{Z_{\alpha,n,m}}]=\MK_\alpha(y_1,\dots,y_n,0,0,\dots).
\end{equation}
Indeed, since $\MK_\alpha$ is quasisymmetric by \autoref{thm:two-polys-agree}, \eqref{eqn:main-n-m} implies that $[\cO_{Z_{\alpha,n,m}}]$ is quasisymmetric, and hence lives in $K(J_n\bP^m)_\Q$ by \autoref{prop:KJmCPn-is-qsym-in-y}; this proves the existence part of (\ref{K-classesAgreewithGlidePolys::eKnm}). The uniqueness part of (\ref{K-classesAgreewithGlidePolys::eKnm}) is immediate from the injectivity statement of \autoref{prop:KJmCPn-is-qsym-in-y}. Next, the map $\iota^*\colon K(J_N\bP^M)_\Q\to K(J_n\bP^m)_\Q$ corresponds to the map 
\[
\Q[y_1,\dots,y_N]/(y_i^{M+1})\cong K((\bP^M)^N)_\Q\to K((\bP^m)^n)_\Q\cong \Q[y_1,\dots,y_n]/(y_i^{m+1})
\]
sending $y_i$ to $0$ for $i>n$. It follows from \eqref{eqn:main-n-m} that
\begin{equation}\label{eqn:main-n-m-helper}
\iota^*[\cO_{Z_{\alpha,N,M}}]=\iota^*\MK_\alpha(y_1,\dots,y_N,0,0,\dots)=\MK_\alpha(y_1,\dots,y_n,0,0,\dots)=[\cO_{Z_{\alpha,n,m}}],
\end{equation}
proving (\ref{K-classesAgreewithGlidePolys::eK}). Since $\psi$ is constructed from isomorphisms at each finite level 
\[
K(J_n\bP^m)_\Q\ \cong\ (\mQSymQ)/(y_1^{m+1},\dots,y_n^{m+1},y_{n+1},y_{n+2},\dots),
\]
we see that (\ref{K-classesAgreewithGlidePolys::main}) follows from \eqref{eqn:main-n-m} and \eqref{eqn:main-n-m-helper}.

We turn now to the proof of \eqref{eqn:main-n-m}. We consider the Chow ring $A^*((\bP^m)^n)_\Q$. The variety $(\bP^m)^n$ is a generalized flag variety with a \emph{Schubert decomposition} given by the cells $e_{2b_1}\times\dots\times e_{2b_n}$ for $n$-tuples $(b_1, \dots, b_n)$ of nonnegative integers. The closures of these cells are the subvarieties of the form $\bP^{b_1}\times\dots\times\bP^{b_n}$, which are the \newword{Schubert varieties} of $(\bP^m)^n$. The Chow classes of the Schubert varieties form a \emph{Schubert basis} of the Chow ring. (For background on generalized flag varieties and their Schubert varieties in much more generality, see, e.g., \cite{Fulton97,Billey.Lakshmibai}.)

The Chow class of $Z_{\alpha,n,m}$ in $A^*((\bP^m)^n)_\Q$ is the sum of the classes of its irreducible components. By construction, the irreducible components of $Z_{\alpha,n,m}$ are Schubert varieties, so $Z_{\alpha,n,m} \in A^*((\bP^m)^n)_\Q$ is a multiplicity-free sum of Schubert classes. (By ``multiplicity-free,'' we mean that the Chow class is a linear combination of Schubert classes where all coefficients are $0$ or $1$.) M.~Brion \cite{Brion} developed a powerful flat degeneration of any subvariety of a homogeneous space whose Chow class is multiplicity-free in the Schubert basis. Building on this work, A.~Knutson \cite[Theorem~3]{Allen} gave a formula for determining $K$-classes from Chow classes in this setting. Specifically, in our context, \cite[Theorem~3]{Allen} tells us that
\[
[\cO_{Z_{\alpha,n,m}}]=\sum_{W\in\cP'}\mu_{\cP'}(W)[\cO_W],
\]
where $\cP'$ is the poset of Schubert varieties of $(\bP^m)^n$ that are subvarieties of $Z_{\alpha,n,m}$, partially ordered by inclusion. Here, $\mu_{\cP'}$ denotes the \newword{M\"obius function} on the poset $\cP'$, defined (in Knutson's somewhat nonstandard conventions) as the unique function on $\cP'$ such that for all $W\in\cP'$, we have 
\[
\sum_{W'\geq W}\mu_{\cP'}(W')=1.
\] 
(Technically, Knutson's formula is stated for $K_0((\bP^m)^n)$ rather than $K^0((\bP^m)^n)$, but $(\bP^m)^n$ is smooth, so we have $K_0((\bP^m)^n)\cong K^0((\bP^m)^n)$.)
Identifying the Schubert variety $\bP^{b_1}\times\dots\times\bP^{b_n}$ with the tuple $(m-b_1,\dots,m-b_n)$, we obtain an order-reversing anti-isomorphism between $\cP'$ and the poset $\cP_\alpha$ from \autoref{sec:glides-via-Mobius}. As a result, 
\begin{align*}
[\cO_{Z_{\alpha,n,m}}] &=\sum_{\sigma=(\sigma_1,\dots,\sigma_n)\in\cP_\alpha}\mu_{\cP_\alpha}(\sigma)[\cO_{\bP^{m-\sigma_1}\times\dots\times\bP^{m-\sigma_n}}]\\
&=\sum_{\sigma=(\sigma_1,\dots,\sigma_n)\in\cP_\alpha}\mu_{\cP_\alpha}(\sigma)y_1^{\sigma_1}\dots y_n^{\sigma_n}\\
&=\MK_\alpha(y_1, \dots, y_n);
\end{align*}
the second equality follows from \autoref{l:KclassProdPn} and the last equality holds by definition. We have therefore proved \eqref{eqn:main-n-m}.

Lastly, to show (\ref{K-classesAgreewithGlidePolys::basis}), we prove that the $\MK_\alpha$ form a Schauder basis for $\mQSymQ$, the ring of quasisymmetric power series with rational coefficients. The key observation here is that the lowest degree terms of $\MK_\alpha$ are the monomial quasisymmetric function $M_\alpha$, so linear independence is immediate. Consider $f \in \mQSymQ$ with lowest degree terms in degree $\ell$; let $f_\ell$ denote the degree $\ell$ homogeneous part of $f$. Since the degree $\ell$ homogeneous part of $\mQSymQ$ is finite-dimensional, with a basis of monomial quasisymmetric functions $M_\beta$ such that $|\beta| = \ell$, we may write $f_\ell$ as a finite $\Q$-linear combination of monomial quasisymmetric functions:
\[
f_\ell = \sum_{i = 1}^k c_i M_{\beta_i},
\] 
for some compositions $\beta_i$ and coefficients $c_i \in \Q$. Now, consider 
\[
f' = f - \sum_{i = 1}^k c_i \MK_{\beta_i} \in \mQSymQ.
\]
The lowest degree terms of $f'$ are in degree $\ell' > \ell$, so by iterating this process we may write $f$ as a countable sum of quasisymmetric monomial glides $\MK_\alpha$ such that there are only finitely-many $\alpha$ with $|\alpha| < p$ appearing for each degree $p$.
\end{proof}

\section{The James reduced product is not normal}\label{sec:not_normal}

One may wonder whether we cannot endow $J_n\bP^m$ with the structure of an algebraic variety and define the $K$-class of a cell $e_\alpha$ as the structure sheaf of the closure of the cell, i.e.~$[\cO_{\overline{e}_\alpha}]\in K_0(J_n\bP^m)$. \autoref{prop:JnPm-not-variety} shows that if it were possible to give $J_n\bP^m$ the structure of an algebraic variety, it would be quite singular. 
(In fact, we give such a singular algebraic structure in \cite{Pechenik.Satriano:double}.)
In particular, $K_0(J_n\bP^m)$ would not have a natural ring structure under intersection products.

\begin{proposition}
\label{prop:JnPm-not-variety}
Let $n>1$. Suppose $J_n\bP^m$ can be given the structure of a complex variety such that $q_n\colon(\bP^m)^n\to J_n\bP^m$ is a map of varieties. Then $J_n\bP^m$ is non-normal and $q_n$ is the normalization map.
\end{proposition}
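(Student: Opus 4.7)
The strategy is to show that $q_n\colon(\bP^m)^n\to J_n\bP^m$ is a finite birational morphism from a normal variety, and then to invoke the universal property of the normalization to identify $q_n$ with the normalization map of $J_n\bP^m$. Non-normality of $J_n\bP^m$ will then follow since $q_n$ is not injective when $n>1$.

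For finiteness, I would note that $(\bP^m)^n$ is projective and $J_n\bP^m$ is (by hypothesis) a variety, hence separated; so $q_n$ is automatically proper. The topological fiber over a class $[x_1,\dots,x_k]$ with $x_i\neq e_0$ consists of the $\binom{n}{k}$ weak compositions of length $n$ with positive part $(x_1,\dots,x_k)$, which is finite. A proper, quasi-finite morphism is finite by Zariski's Main Theorem.

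For birationality, I would consider the open subset $U=(\bP^m\smallsetminus\{e_0\})^n\subset(\bP^m)^n$. Two points of $U$ map to the same class in $J_n\bP^m$ only when they are literally equal, so $q_n|_U$ is injective. Moreover, since $q_n$ is closed, the set $q_n((\bP^m)^n\smallsetminus U)$ is closed; comparing positive parts shows that it is disjoint from $q_n(U)$, so $q_n(U)$ is open and $q_n^{-1}(q_n(U))=U$. Thus $q_n|_U\colon U\to q_n(U)$ is a finite bijective morphism of integral varieties of the same dimension. In characteristic zero, generic flatness and separability imply that the geometric generic fiber of any finite morphism $X\to Y$ of integral varieties has cardinality $[K(X):K(Y)]$; since $q_n|_U$ has singleton fibers, this degree equals one, and $q_n$ is birational.

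With $q_n$ established as a finite birational morphism and $(\bP^m)^n$ normal (being smooth), the universal property of the normalization $\nu\colon\widetilde{J_n\bP^m}\to J_n\bP^m$ yields a factorization $q_n=\nu\circ\widetilde{q_n}$ with $\widetilde{q_n}$ a finite birational morphism between normal varieties, necessarily an isomorphism (since $(\widetilde{q_n})_\ast\cO_{(\bP^m)^n}$ is a finite subalgebra of the shared function field, forcing equality by normality). Hence $q_n$ coincides with the normalization map. Finally, for $m\geq 1$ and $n>1$, the tuples $(e_0,x,e_0,\dots,e_0)$ and $(x,e_0,\dots,e_0)$ with $x\neq e_0$ are distinct points of $(\bP^m)^n$ with the same image, so $q_n$ is not injective and in particular not an isomorphism; this forces $J_n\bP^m$ to be non-normal. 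The main delicacy is the birationality step, where one must extract a statement about function fields from set-theoretic injectivity on $U$; the remaining finite- and normalization-arguments are otherwise routine.
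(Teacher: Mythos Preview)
Your proof is correct and follows essentially the same approach as the paper: establish that $q_n$ is finite (proper plus quasi-finite, via Zariski's Main Theorem), then birational, then invoke the universal property of normalization, and finally observe that $q_n$ is not injective. The only notable difference is in the birationality step: you work over the open set $U=(\bP^m\smallsetminus\{e_0\})^n$ and deduce degree one via a function-field argument, whereas the paper restricts to the top-dimensional cell and uses generic flatness to find an open subset where a flat bijection is automatically an isomorphism; both arguments are standard and closely related.
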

\begin{proof}
Since $q_n$ is surjective and $(\bP^m)^n$ is irreducible, $J_n\bP^m$ is irreducible as well. Since $J_n\bP^m$ is a variety, it is separated; it follows that $q_n$ is proper as $(\bP^m)^n$ is proper. Since, by construction, $q_n$ is also quasi-finite, Zariski's Main Theorem tells us $q_n$ is finite. 

There is a unique top-dimensional cell $e_{(n,\dots,n)}$ of $J_n\bP^m$. Its complement is equal to $q_n(Z)$, where $Z=(\bP^m)^n\smallsetminus(\bA^m)^n$. Since $Z$ is closed and $q_n$ is a closed map (because it is proper), we see $e_{(n,\dots,n)}$ is Zariski open in $J_n\bP^m$. By generic flatness, there is a non-empty Zariski open subset $U\subset J_n\bP^m$ over which $q_n$ is flat. Since $J_n\bP^m$ is irreducible, $V:=U\cap e_{(n,\dots,n)}$ is non-empty and dense. Over $V$, the map $q_n$ is flat and bijective, hence an isomorphism. Therefore, $q_n$ is a finite birational map. 

Let $\nu\colon\widetilde{J_n\bP^m}\to J_n\bP^m$ be the normalization map. Since $(\bP^n)^m$ is normal and $q_n$ is surjective, by the universal property of normalization, there is a unique map $\widetilde{q}_n$ making the diagram
\[
\xymatrix{
(\bP^n)^m\ar[rr]^-{\widetilde{q}_n}\ar[dr]_-{q_n} & & \widetilde{J_n\bP^m}\ar[dl]^-{\nu}\\
& J_n\bP^m &
}
\]
commute. Since $\nu$ and $q_n$ are finite, $\widetilde{q}_n$ is as well. Since $q_n$ is an isomorphism over $V\subset J_n\bP^m$ and $(\bP^n)^m$ is smooth, we see $V$ is smooth, hence normal. As a result, $\nu$ is an isomorphism over $V$ and so $\widetilde{q}_n$ is a finite birational map to a normal variety. Another application of Zariski's Main Theorem tells us $\widetilde{q}_n$ is an isomorphism. In other words, $q_n$ is the normalization map of $J_n\bP^m$.

Lastly, if $J_n\bP^m$ were normal, then $q_n$ would be an isomorphism. This is not possible since $q_n$ is $n$-to-$1$ over the cell $e_{(1)}$ and $n>1$.
\end{proof}

It would be interesting to find an embedding of $J(\C\bP^\infty)$ inside a smooth infinite-type scheme $W$ of the same homotopy type. If $W$ had a stratification by complex affine spaces, it would be a sort of ``James reduced product analogue'' of a \emph{thick Kashiwara flag variety} \cite{Kashiwara} and come with another canonical basis of $K$-theory in analogy with \cite{Kashiwara.Shimozono, Lam.Schilling.Shimozono}.

\section{Extending to flag varieties}\label{sec:flag}

We end this paper by generalizing \autoref{thm:main-H} to a much larger class of CW complexes, particularly with applications to James reduced products of generalized flag varieties. We suppress some of the details that are identical to those in the $\C\bP^\infty$ case. Throughout this section, $X$ denotes a CW complex with only even-dimensional cells, only finitely many cells of any given dimension, and a unique $0$-cell.

We denote by $\{e_\theta : \theta \in I\}$ the cells of $X$, where $\theta$ runs through some index set $I$; we require that $I$ contains an element $0$ such that $e_0$ is the unique $0$-cell. 
We then have 
\[
H_*(X; \Z) \cong \bigoplus_{\theta \in I} \Z e_\theta \quad\textrm{and}\quad H^*(X; \Z) \cong \bigoplus_{\theta \in I} \Z x_\theta,
\]
where $x_\theta$ denotes the function dual to the cell $e_\theta$.

A \newword{weak $I$-composition} is a finite sequence of elements of $I$, and an \newword{$I$-composition} is a finite sequence of elements of $I\smallsetminus\{0\}$. Every weak $I$-composition $T$ has an associated \newword{positive part} $T^+$ obtained from $a$ by deleting the $0$ terms. For example, if $\theta_1,\theta_2,\theta_3,\theta_4\in I\smallsetminus\{0\}$, then the positive part of $(\theta_1,0,\theta_2,\theta_3,0,0,\theta_4)$ is $(\theta_1,\theta_2,\theta_3,\theta_4)$. Note that \emph{compositions} and \emph{weak compositions} as discussed in \autoref{sec:cohomology} are, in this more general terminology, \emph{$\Z_{\geq 0}$-compositions} and \emph{weak $\Z_{\geq 0}$-compositions}, respectively.

The CW structure on the James reduced product $J_n(X)$ is induced from the cellular quotient map
\[
q_n\colon X^n\to J_n(X),
\]
which, by construction, identifies the cells $e_{\theta_1}\times\dots\times e_{\theta_n}$ and $e_{\kappa_1}\times\dots\times e_{\kappa_n}$ if and only if \[
(\theta_1,\dots,\theta_n)^+=(\kappa_1,\dots,\kappa_n)^+.
\]
 Hence, the cells of $J_n(X)$ are indexed by the $I$-compositions $\Theta = (\theta_1,\dots,\theta_k)$ where $k\leq n$. We denote the corresponding cell of $J_n(X)$ by $e_{(\theta_1,\dots, \theta_k)}$. We see then that the cells $e_\Theta$ of $J(X)$ are indexed by all $I$-compositions $\Theta$ of arbitrary length. As was the case for $J(\C\bP^\infty)$, we have
\[
H_*(J(X); \Z)\cong \bigoplus_\Theta \Z e_\Theta \quad\textrm{and}\quad H^*(J(X); \Z)\cong \bigoplus_\Theta \Z x_\Theta,
\]
where $x_\Theta$ denotes the function dual to the cell $e_\Theta$ and both sums are over all $I$-compositions $\Theta$. We therefore again have
\begin{equation*}
H^*(J(X); \Z)=\lim_{n\to\infty}H^*(J_n(X);\Z).
\end{equation*}

We prove that $H^*(J(X); \Z)$ is built out of $\QSym$, where one substitutes cohomology classes from $H^*(X; \Z)$ in for the variables $x_i$. In order to make this precise, we first introduce a new ring as follows.

\begin{definition}
\label{def:QSym-substitute}
Let $R$ be a commutative graded ring which is free as a graded $\Z$-module. Let $\eta\colon R\to\Z$ be a map of graded rings where $\Z$ is placed in degree $0$. For every $n\geq1$, $1\leq i\leq n$, and $r\in R$, let
\[
r^{(i)} :=1\otimes\dots\otimes1\otimes r\otimes1\otimes\dots\otimes1 \in  R^{\otimes n}
\]
with $r$ in the $i$th tensor factor. Let
\[
\sQSym_n(R)\subset R^{\otimes n}
\]
be the graded subring generated by expressions of the form
\[
M_{n,(r_1,\dots,r_k)}:=\sum_{1\leq i_1<\dots<i_k\leq n} r_1^{(i_1)}r_2^{(i_2)}\cdots r_k^{(i_k)}
\]
where $r_j\in R$. The map $\eta$ yields a graded map $R^{\otimes(n+1)}\to R^{\otimes n}\otimes_{\Z} \Z= R^{\otimes n}$ for each $n$, which induces a graded map
\[
\sQSym_{n+1}(R)\to\sQSym_n(R).
\]
We let
\[
\sQSym(R)=\lim_n\sQSym_n(R)
\]
be the limit in the category of graded rings. Furthermore, for every choice $(r_1,\dots,r_k)$, we have an element
\[
M_{(r_1,\dots,r_k)}\in \sQSym(R)
\]
mapping to $M_{n,(r_1,\dots,r_k)}$ for each $n\geq k$.
\end{definition}

In our case of interest, we take $R=H^*(X;\Z)$ and the map $\eta\colon H^*(X;\Z)\to H^*(e_0;\Z)=\Z$ in \autoref{def:QSym-substitute} is taken to be the pullback map induced by the inclusion $e_0\in X$. For notational convenience, for each $I$-composition $\Theta=(\theta_1,\dots, \theta_k)$, we let
\[
M_{n,\Theta}:=M_{n,(x_{\theta_1},\dots,x_{\theta_k})}\in \sQSym_n(H^*(X;\Z))
\]
and 
\[
M_\Theta:=M_{(x_{\theta_1},\dots,x_{\theta_k})}\in \sQSym(H^*(X;\Z));
\]
recall that $x_{\theta_i}\in H^*(X;\Z)$ is the dual function to the cell $e_{\theta_i}$. 
Note that for each $n$, the set
\[
\left\{ M_{n,\Theta} : \Theta=(\theta_1,\dots,\theta_k),k\leq n\right\}
\]
is a homogeneous basis for $\sQSym_n(H^*(X; \Z))$.

\begin{theorem}\label{thm:cohomology-of-James-reduced-product-in-general}
Let $X$ be a CW complex with only even-dimensional cells, only finitely many cells of any given dimension, and a unique $0$-cell $e_0$. Then we have a natural isomorphism of graded rings
\[
H^*(J(X);\Z)\cong\sQSym(H^*(X;\Z))
\]
sending $x_\Theta$ to $M_\Theta$ for every $I$-composition $\Theta$.
\end{theorem}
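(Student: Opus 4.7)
The approach is to generalize the proof of \autoref{james-reduced-cells->monomial-basis} almost verbatim, with $R = H^*(X;\Z)$ replacing $\Z\llbracket x \rrbracket$. The strategy decomposes into three steps: (i) express $H^*(J(X);\Z)$ as an inverse limit $\lim_n H^*(J_n(X);\Z)$ of graded rings; (ii) identify each $H^*(J_n(X);\Z)$ with $\sQSym_n(H^*(X;\Z))$ compatibly; and (iii) pass to the limit.

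For (i), I first note that every non-basepoint cell of $X$ has even dimension $\geq 2$, so an $I$-composition $\Theta$ of length $k$ indexing a cell of dimension $2d$ must satisfy $k \leq d$. Consequently, the cells of dimension $2d$ in $J_n(X)$ stabilize for $n \geq d$, and the inclusion-induced map $\iota_n^* \colon H^{2d}(J(X);\Z) \to H^{2d}(J_n(X);\Z)$ is an isomorphism for $n \geq d$. This yields $H^*(J(X);\Z) = \lim_n H^*(J_n(X);\Z)$ as graded rings, just as in \eqref{eqn:H-is-inverse-limit-James-reduced-product}. For (ii), the K\"unneth formula (applicable because the evenness hypothesis forces $H^*(X;\Z)$ to be free as a graded $\Z$-module) gives $H^*(X^n;\Z) \cong H^*(X;\Z)^{\otimes n}$. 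The description of $q_n$ on cells shows that
\[
q_n^{-1}(e_\Theta) = \coprod_\iota e_\iota,
\]
where $\iota\colon[k]\to[n]$ ranges over order-preserving injections and $e_\iota$ is the product cell with $e_{\theta_j}$ in position $\iota(j)$ and $e_0$ elsewhere. Taking duals via K\"unneth identifies $q_n^*(x_\Theta)$ with $M_{n,\Theta}\in\sQSym_n(H^*(X;\Z))$. Since $\{M_{n,\Theta}\}$ is a homogeneous $\Z$-basis for $\sQSym_n(H^*(X;\Z))$ as recorded just before the theorem, the factored map $q_n^*\colon H^*(J_n(X);\Z)\to \sQSym_n(H^*(X;\Z))$ is surjective. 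Injectivity follows from the observation that $x_{\theta_1}\otimes\cdots\otimes x_{\theta_k}\otimes 1\otimes\cdots\otimes 1$ appears in $M_{n,\Theta}$ with coefficient $1$ and is absent from every other $M_{n,\Theta'}$; hence the coefficient of $x_\Theta$ in any $f \in H^*(J_n(X);\Z)$ can be read off from $q_n^*(f)$. Thus $q_n^*$ is an isomorphism of graded rings onto $\sQSym_n(H^*(X;\Z))$.

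For (iii), I need the commutative square analogous to the one in the proof of \autoref{james-reduced-cells->monomial-basis}, relating $q_n^*$ and $q_{n+1}^*$ through the cellular inclusion $J_n(X)\hookrightarrow J_{n+1}(X)$ on the topological side and the map $H^*(X;\Z)^{\otimes(n+1)}\to H^*(X;\Z)^{\otimes n}$ that applies $\eta = $ pullback along $\{e_0\}\hookrightarrow X$ in the last tensor factor on the algebraic side. This is exactly the transition map built into \autoref{def:QSym-substitute}, and commutativity of the square is immediate from the fact that the inclusion $J_n(X)\hookrightarrow J_{n+1}(X)$ is covered by $X^n\hookrightarrow X^{n+1}$ appending the basepoint. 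Taking the inverse limit in the category of graded rings then delivers the desired isomorphism $H^*(J(X);\Z)\cong \sQSym(H^*(X;\Z))$ sending $x_\Theta$ to $M_\Theta$. The only real obstacle is bookkeeping: keeping track of the indexing set $I$, the map $\eta$, and the grading on $R$ in place of the scalar conventions of \autoref{sec:cohomology}. No genuinely new idea is required beyond the $\C\bP^\infty$ argument.
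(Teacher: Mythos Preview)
Your proposal is correct and follows essentially the same route as the paper's proof: K\"unneth to identify $H^*(X^n)\cong R^{\otimes n}$, the cellular computation $q_n^{-1}(e_\Theta)=\coprod_\iota e_\iota$ giving $q_n^*(x_\Theta)=M_{n,\Theta}$, the resulting isomorphism $H^*(J_n(X);\Z)\cong\sQSym_n(R)$, and passage to the inverse limit via the compatible commutative squares. The only cosmetic difference is that the paper deduces bijectivity of $q_n^*$ in one stroke by observing that $\{x_\Theta\}$ and $\{M_{n,\Theta}\}$ are each homogeneous $\Z$-bases, whereas you separate surjectivity and injectivity (the latter by coefficient extraction, exactly as in the proof of \autoref{james-reduced-cells->monomial-basis}); these are the same argument.
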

\begin{proof}
Since we know that $H^d(X;\Z)$ is free of finite rank for every $d$, the K\"unneth formula (see, e.g., \cite[Theorem 3.16]{hatcher-at}) tells us that
\begin{equation}\label{eqn:kunneth-X-power}
H^*(X^n;\Z)\cong H^*(X;\Z)^{\otimes n}.
\end{equation}

For every $I$-composition $\Theta=(\theta_1,\dots, \theta_k)$, any $n\geq k$, and any injection $\iota\colon [k] \to [n]$, define a cell $e_\iota$ of $X^n$ by 
\[
e_\iota:=e_{\kappa_1}\times\dots\times e_{\kappa_n},
\]
where 
\[
\kappa_j := \begin{cases}
	0, & \text{if } j \notin \im \iota; \\
	\theta_{\iota^{-1}(j)}, & \text{otherwise.}
\end{cases}
\]
We see the that
\[
q_n^{-1}(e_\Theta)=\coprod_\iota e_\iota,
\]
where the disjoint union is over all injections $\iota :[k] \to [n]$.
As a result, making use of Equation~\eqref{eqn:kunneth-X-power} and the notation introduced in \autoref{def:QSym-substitute}, we have
\[
q_n^*(x_\Theta)=\sum_{1\leq i_1<\dots<i_k\leq n} x_{\theta_1}^{(i_1)}\dots x_{\theta_k}^{(i_k)}=M_{n,\Theta}\in\sQSym_n(H^*(X;\Z)).
\]
For each $n$, the set $\{x_\Theta : \Theta=(\theta_1,\dots,\theta_k),k\leq n\}$ is a homogeneous basis for $H^*(J_n(X);\Z)$ and the set $\{M_{n,\Theta} : \Theta=(\theta_1,\dots,\theta_k),k\leq n\}$ is a homogeneous basis for $\sQSym_n(H^*(X;\Z))$. It follows that $q_n^*$ induces a graded isomorphism
\[
q_n^* : H^*(J_nX;\Z)\xrightarrow{\cong}\sQSym_n(H^*(X;\Z)).
\]

To finish the proof of the theorem, we note that the commutative diagram
\[
\xymatrix{
X^n\ar[r]^{\jmath'_n}\ar[d]^-{q_n} & X^{n+1}\ar[d]^-{q_{n+1}}\\
J_nX\ar[r]^{\jmath_n} & J_{n+1}X
}
\]
induces a commutative diagram
\[
\xymatrix{
H^*(J_{n+1}X;\Z)\ar[r]^-{q^*_{n+1}}_-{\cong} \ar[d]^-{\jmath^*_n} & \sQSym_{n+1}(H^*(X;\Z))\ar[d]^-{\pi_n}\ar@{^{(}->}[r] & H^*(X;\Z)^{\otimes(n+1)}\ar[r]_-{\cong}\ar[d] & H^*(X^{n+1};\Z)\ar[d]_-{(\jmath'_n)^*}\\
H^*(J_nX;\Z)\ar[r]^-{q^*_n}_-{\cong} & \sQSym_n(H^*(X;\Z))\ar@{^{(}->}[r] & H^*(X;\Z)^{\otimes n}\ar[r]_-{\cong} & H^*(X^n;\Z)
}
\]
of graded rings. Hence,
\[
H^*(J(X);\Z)=\lim_{n\to\infty} H^*(J_n(X);\Z)\cong \lim_{n\to\infty}\sQSym_n(H^*(X;\Z))=\sQSym(H^*(X;\Z)),
\]
where the limits are taken in the category of graded rings.
\end{proof}

\subsection{James reduced products of generalized flag varieties}
\label{subsec:generalized-flag-vars-HJX}

We obtain particularly interesting applications of \autoref{thm:cohomology-of-James-reduced-product-in-general} 
in the case where 
$X$ is a generalized flag variety, as in many of these cases we have significant combinatorial understanding of the cohomology ring $H^*(X; \Z)$ (see, e.g., \cite{Macdonald:notes,Thomas.Yong:comin,Chaput.Perrin,Searles.Yong}).

Let $G$ be a connected complex reductive Lie group with Borel subgroup $B$ and let $B \subseteq P \subset G$ be a parabolic subgroup. Let $W$ be the Weyl group of $G$ and let $W_P \subseteq W$ be the Weyl group of the Levi subgroup of $P$. The subgroup $W_P$ is generated by the simple reflections $\{s_i\}$ indexed by the nodes of $P$ in the Dynkin diagram from $G$. The \newword{Coxeter length} of $w \in W$ is the length $\ell(w)$ of a shortest expression for $w$ as a product of simple reflections $w = s_{i_1}s_{i_2} \cdots s_{i_{\ell(w)}}$.  Every coset in $W/W_P$ has a canonical representative given by the unique element with shortest Coxeter length. We write $W^P$ for the set of minimum length representatives of the cosets $W/W_P$. For further expositions of these ideas, see, e.g., \cite{Humphreys:Lie,Humphreys}.

The \newword{generalized flag variety} $G/P$ has a Schubert cell decomposition with cells indexed by the elements of $W^P$. Here, the cells are the $B$-orbits induced from the canonical action of $B$ on $G$. We choose the convention such that the cell $e_w$ has (real) dimension $2 \ell(w)$. The generalized flag variety has a finite number of cells, all even dimensional. Moreover, there is a unique $0$-cell labeled by the 
identity element. 
Hence, the hypotheses of \autoref{thm:cohomology-of-James-reduced-product-in-general} hold and we obtain an explicit description of the cohomology ring $H^*(J(G/P); \Z)$ in terms of $H^*(G/P; \Z)$. In the cases where we have explicit combinatorial models for $H^*(G/P; \Z)$, we can leverage this understanding to obtain combinatorial models of $H^*(J(G/P); \Z)$.

\begin{example}\label{ex:flags}
	Let $G = \mathrm{GL}_n(\C)$ and take $B=P$ to be the Borel subgroup of upper triangular matrices. Then $\flags_n := G/P$ is the \newword{complete flag variety}, parametrizing complete flags
	\[
	V_1 \subset V_2 \subset \dots \subset V_n = \mathbb{C}^n
	\]
	of nested vector subspaces of $\C^n$. In this case, $W = W^P = S_n$ is the symmetric group on $n$ letters, so we obtain $n!$ cells in the Schubert cell decomposition of $\flags_n$, labeled by permutations. Borel \cite{Borel} gave a presentation of $H^*(\flags_n; \Z)$ as the quotient 
	\[
	H^*(\flags_n; \Z) \cong \Z[x_1, \dots, x_n] / I,
	\]
	where $I$ is the ideal generated by the $n$ \newword{elementary symmetric polynomials}
	\[
	M_{(1)}, M_{(1,1)}, \dots, M_{(1, \dots, 1)}.
	\]
	The cohomology classes $x_w$ dual to the Schubert cells $e_w$ form a $\Z$-basis of $H^*(\flags_n; \Z)$, so we identify them with elements of $\Z[x_1, \dots, x_n] / I$. Indeed, a remarkable choice of coset representatives for these classes was produced by A.~Lascoux and M.-P.~Sch\"{u}tzenberger \cite{Lascoux.Schutzenberger:Schubert} (see also, \cite{Macdonald:notes}), and explained geometrically in \cite{Feher.Rimanyi, Knutson.Miller}. These representatives are the \emph{Schubert polynomials} $\mathfrak{S}_w$, indexed by the permutations $w \in S_n$. Various explicit combinatorial formulas for Schubert polynomials are known, e.g., \cite{Billey.Jockusch.Stanley,Bergeron.Billey,Weigandt.Yong}.
	
	By \autoref{thm:cohomology-of-James-reduced-product-in-general}, the cohomology $H^*(J(\flags_n); \Z)$ of the James reduced product has a cellular $\Z$-basis indexed by tuples $\Theta = (w_1, \dots, w_k)$ of nonidentity permutations in $S_n$. Considering an infinite array of variables $x_i^{(j)}$ for $1 \leq i \leq n$ and $j \in \Z_{>0}$, we obtain explicit power series representatives for these classes as follows. For $j \in \Z_{>0}$, let $\mathfrak{S}_w^{(j)}$ denote the Schubert polynomial $\mathfrak{S}_w$ in the variables $x_1^{(j)}, \dots, x_n^{(j)}$. Then, \autoref{thm:cohomology-of-James-reduced-product-in-general} gives that the cellular cohomology class $x_\Theta$ is represented in 
	\[
	H^*(J(\flags_n); \Z) \cong \sQSym(H^*(\flags_n; \Z))
	\]
	by the power series 
	\[
	M_\Theta  = \sum_{1\leq j_1 < j_2 < \dots < j_k} \mathfrak{S}_{w_1}^{(j_1)} \mathfrak{S}_{w_2}^{(j_2)} \cdots \mathfrak{S}_{w_k}^{(j_k)}.
	\]
	With this description of $M_\Theta$ in hand, it is then straightforward to compute any of the structure coefficients of $H^*(J(\flags_n))$ with respect to the cellular basis.
	\end{example}

\subsection{James reduced products of the classifying spaces $BU(k)$}	Slightly extending the above discussion, we will apply \autoref{thm:cohomology-of-James-reduced-product-in-general} to the classifying spaces $BU(k)$. Let $G_n = \mathrm{GL}_n(\C)$ and let $P_n \subset G_n$ denote the maximal parabolic subgroup of block upper triangular matrices with block sizes $k$ and $n-k$. Each $G_n/P_n$ is the \newword{Grassmannian} $\mathrm{Gr}_k(\C^n)$, parametrizing (complex) $k$-dimensional linear subspaces of $\C^n$. There are natural inclusions $G_i/P_i \hookrightarrow G_{i+1}/P_{i+1}$ and $BU(k)$, the classifying space for rank $k$ complex vector bundles, is obtained as the colimit of this system.
	
	Note that these inclusion maps respect the CW structures, so we obtain a Schubert cell decomposition of $BU(k)$ induced by those of the various $\mathrm{Gr}_k(\C^n)$. An integer $1 \leq k < n$ is a \newword{descent} of the permutation $w \in S_n$ if $w(k) > w(k+1)$. A permutation is \newword{$k$-Grassmannian} if it is the identity permutation or it has $k$ as its only descent. Writing $W_n = S_n$ for the Weyl group of $G_n$, one computes that $W_n^{P_n}$ is the set of $k$-Grassmannian permutations in $S_n$. Hence, these permutations naturally index the cells of $\mathrm{Gr}_k(\C^n)$.
	
	It is more traditional, however, to index the cells of $\mathrm{Gr}_k(\C^n)$ by \newword{partitions}, i.e., nonincreasing sequences of nonnegative integers. Here, the translation from a $k$-Grassmannian permutation $w$ to a partition $\lambda$ is given by 
	\[
	w \longleftrightarrow (w(k)-k, \dots, w(2)-2,w(1)-1).
	\]
	For example, the $5$-Grassmannian permutation $w = 124693578 \in S_9$ corresponds to the partition $(4,2,1,0,0)$. We write $w_\lambda$ for the permutation associated to the partition $\lambda$. The cell $e_\lambda = e_{w_\lambda}$ has dimension $2|\lambda|$, where $|\lambda|$ denotes the sum of the elements of $\lambda$. In this notation, the cells of $\mathrm{Gr}_k(\C^n)$ are indexed by the set of all $\binom{n}{k}$ partitions of length $k$ with all elements at most $n-k$. Taking the colimit, the cells of $BU(k)$ are indexed by all partitions of length $k$ (with no upper bound on the elements). Note that there is a unique $0$-cell (indexed by the zero partition), all cells are even dimensional, and there are only finitely-many cells in any fixed dimension. Hence, \autoref{thm:cohomology-of-James-reduced-product-in-general} applies to explicate the topology of $J(BU(k))$.
	
Given such a partition $\lambda$ of length $k$, the \emph{Schur polynomial} $s_\lambda \in \Z[x_1, \dots, x_k]$ is a classical symmetric polynomial arising historically from the representation theory of $S_n$ and $\mathrm{GL}_n(\C)$. In our context, the important feature of Schur polynomials is that they represent the cohomology classes $x_\lambda \in H^*(BU(k))$. That is, we have 
\[
x_\lambda \cdot x_\mu = \sum_{\nu} c_{\lambda,\mu}^\nu x_\nu \quad \Longleftrightarrow \quad s_\lambda \cdot s_\mu = \sum_{\nu} c_{\lambda,\mu}^\nu s_\nu;
\] here, the structure coefficients $c_{\lambda,\mu}^\nu$ are \newword{Littlewood--Richardson coefficients}, computed combinatorially by the various \emph{Littlewood--Richardson rules}, e.g., \cite{Littlewood.Richardson, Knutson.Tao.Woodward}, and both sums are over partitions of length $k$. 

To give a positive combinatorial rule for the structure coefficients of $H^*(J(BU(k)))$, we must first recall one such Littlewood--Richardson rule for multiplying Schur polynomials, as well as Hazewinkel's \cite{Hazewinkel} positive combinatorial multiplication rule for monomial quasisymmetric functions $M_\alpha$.

Given a partition $\lambda = (\lambda_1, \dots, \lambda_k)$ of length $k$, its \newword{Young diagram} is formed by placing $\lambda_i$ left justified boxes in row $i$. For example, the Young diagram of the partition $(4,2,1)$ is $\ydiagram{4,2,1}$. We say $\lambda \subseteq \nu$ if the corresponding containment holds between their Young diagrams. In such a case, $\nu / \lambda$ denotes the set-theoretic difference $\nu \smallsetminus \lambda$ of their Young diagrams. A \newword{semistandard tableau} of shape $\nu / \lambda$ is an assignment of positive integers to the boxes of $\nu / \lambda$ such that the labels are nondecreasing left-to-right along rows and are strictly increasing top-to-bottom down columns. The \newword{content} of a semistandard tableau $T$ is the integer vector $(c_1(T), \dots)$, where $c_i(T)$ counts the number of instances of $i$ as a box label in $T$. The \newword{reading word} $w(T)$ of a semistandard tableau $T$ is the string obtained by reading the labels of $T$ by rows right-to-left and then top-to-bottom. 
A semistandard tableau $T$ is \newword{ballot} if, for all $i\geq 1$, every initial segment of $w(T)$ contains at least as many instances of $i$ as of $i+1$. The following is one classical version of a Littlewood--Richardson rule.

\begin{proposition}\label{prop:LR}
	If $\lambda, \mu, \nu$ are partitions of length $k$, then the Littlewood--Richardson coefficient $c_{\lambda,\mu}^\nu$ equals the number of ballot semistandard tableaux of shape $\nu / \lambda$ that have content $\mu$. \qed
\end{proposition}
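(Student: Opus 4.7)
The plan is to follow the standard tableau-theoretic proof via jeu de taquin and Knuth equivalence. First, I would recall the combinatorial definitions $s_\lambda = \sum_T x^T$ over SSYT of shape $\lambda$, and $s_{\nu/\lambda} = \sum_T x^T$ over SSYT of skew shape $\nu/\lambda$. A routine Hopf-algebraic argument (using self-duality of $\Sym$ under the Hall inner product together with the coproduct $\Delta(s_\nu) = \sum_\lambda s_\lambda \otimes s_{\nu/\lambda}$) shows that the structure constants $c_{\lambda,\mu}^\nu$ defined by $s_\lambda \cdot s_\mu = \sum_\nu c_{\lambda,\mu}^\nu s_\nu$ also appear as the expansion coefficients in $s_{\nu/\lambda} = \sum_\mu c_{\lambda,\mu}^\nu s_\mu$. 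Thus it suffices to compute the multiplicity of $s_\mu$ in $s_{\nu/\lambda}$.

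Next, I would invoke Sch\"utzenberger's theory of jeu de taquin. Given an SSYT $T$ of skew shape, a \emph{slide} chooses an inner corner of the shape and locally rearranges entries; iterating slides to exhaustion produces a straight-shape tableau $\mathrm{rect}(T)$, the rectification of $T$. The key technical input---which I would cite rather than reprove---is that $\mathrm{rect}(T)$ is independent of the sequence of slide choices; equivalently, $\mathrm{rect}(T) = \mathrm{rect}(T')$ exactly when the reading words $w(T)$ and $w(T')$ are Knuth equivalent. Granted this, the coefficient of $s_\mu$ in $s_{\nu/\lambda}$ equals the number of SSYT of shape $\nu/\lambda$ whose rectification is the unique \emph{superstandard} tableau $Y_\mu$ of shape $\mu$ with row $i$ filled by $i$'s, since $Y_\mu$ is the unique SSYT of shape $\mu$ whose monomial $x^\mu$ is a lowest-index leading term.

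Finally, I would identify which $T$ rectify to $Y_\mu$. A short argument shows that for an SSYT $T$ of content $\mu$, one has $\mathrm{rect}(T) = Y_\mu$ precisely when $w(T)$ is a ballot word: ballot-ness is preserved under the elementary Knuth transformations, and $w(Y_\mu)$ is manifestly ballot; conversely, each Knuth class of ballot words with fixed content $\mu$ contains a unique straight-shape representative, namely $Y_\mu$ itself. Combining this identification with the previous step yields the claimed formula. The main obstacle throughout is the well-definedness of rectification---historically the source of the gap in Littlewood and Richardson's original argument---but modern proofs via growth diagrams or the RSK correspondence (as in, e.g., Fulton's \emph{Young Tableaux}) resolve it cleanly, and I would simply import this result rather than reprove it.
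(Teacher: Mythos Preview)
Your sketch is mathematically sound and follows the standard Sch\"utzenberger jeu-de-taquin route to the Littlewood--Richardson rule; the reduction via $s_{\nu/\lambda} = \sum_\mu c_{\lambda,\mu}^\nu s_\mu$, the rectification argument, and the identification of ballot words with the Knuth class of the superstandard filling $Y_\mu$ are all correct, with the well-definedness of rectification correctly flagged as the one nontrivial input to be imported.

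That said, there is nothing to compare against: the paper gives no proof of this proposition at all. It is stated as a classical result (with the \qed symbol appended directly to the statement) and used as black-box background, with references to \cite{Littlewood.Richardson} and \cite{Knutson.Tao.Woodward} given earlier in the text. So your proposal is not an alternative to the paper's argument but rather a supplementary proof of a result the authors deliberately chose not to reprove.
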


\begin{example}
	Let $\lambda = \mu = (2,1,0)$ and let $\nu = (3,2,1)$. Then by \autoref{prop:LR}, the Littlewood--Richardson coefficient $c_{\lambda,\mu}^\nu = 2$, as witnessed by the two ballot semistandard tableaux
	\[
	\ytableaushort{\blank \blank 1, \blank 1, 2} \quad \text{and} \quad \ytableaushort{\blank \blank 1, \blank 2, 1}.
	\]
	Note that the third filling 
	\[
	\ytableaushort{\blank \blank 2, \blank 1, 1}
	\] of this shape with two $1$s and one $2$ fails the ballotness condition, so does not contribute to $c_{\lambda,\mu}^\nu = 2$
\end{example}

Let $(m,n)$ be a pair of positive integers. Then, an \newword{overlapping shuffle} of $(m,n)$ is a surjection 
\[
\tau : [m+n] \to [k]
\] 
for some $\max\{m,n\} \leq k \leq m+n$ such that 
\begin{itemize}
	\item $i<j \leq m \Rightarrow \tau(i) < \tau(j)$ and
	\item $m<i<j \Rightarrow \tau(i) < \tau(j)$.
\end{itemize} 
That is to say, $\tau$ is separately strictly order preserving on the first $m$ elements of $[m+n]$ and on the last $n$ elements, but has no requirements on the relative behaviour of elements from opposite ends of the interval $[m+n]$.

Let $\alpha$ be a composition of length $m$ and $\beta$ a composition of length $n$. Then, for $\tau$ an overlapping shuffle of $(m,n)$, we define a composition $\gamma^\tau$ by
\begin{equation}\label{eq:gamma}
	\gamma_i^\tau = \sum_{\tau(j) = i} (\alpha^\frown \beta)_j,
\end{equation}
where $\alpha^\frown \beta$ denotes concatenation of strings. (Note that here the summation has at most $2$ summands.) The \newword{overlapping shuffle product} of $\alpha$ and $\beta$ is the formal sum
\begin{equation}\label{eq:shuffle}
	\alpha \shuffle_o \beta  = \sum_{\tau} \gamma^\tau
\end{equation}
over overlapping shuffles $\tau$ of $(m,n)$. (Here, the binary operator is a modified Cyrillic letter ``Sha'' for ``shuffle,'' with a subscript ``o'' for ``overlapping'' to distinguish from the more common \emph{shuffle product} of S.~Eilenberg and S.~Mac~Lane \cite{Eilenberg.MacLane}.) Note that distinct overlapping shuffles $\tau, \tau'$ can yield the same composition $\gamma^\tau = \gamma^{\tau'}$, so the formal sum \eqref{eq:shuffle} can have nontrivial coefficients.

\begin{example}\label{ex:overlapping_shuffle}
	The overlapping shuffle product of the compositions $\alpha =(3)$ and $\beta = (1,3)$ is
	\[
	(3) \shuffle_0 (1,3) = (3,1,3) + 2 \cdot (1,3,3) + (4,3) + (1,6).
	\]
	The composition $(1,3,3)$ appears once from an overlapping shuffle $\tau$ with $\tau(1) = 2$ and once from an overlapping shuffle $\tau'$ with $\tau'(1) = 3$.
\end{example}

The following is a monomial quasisymmetric function analogue of the Littlewood--Richardson rule. By \autoref{james-reduced-cells->monomial-basis}, it yields a positive combinatorial formula for the structure coefficients of $H^*(J(\C\bP^\infty)) = H^*(J(BU(1)))$ with respect to the cellular basis $\{x_\alpha \}$.

\begin{proposition}[\cite{Hazewinkel}]\label{prop:M-LR}
For compositions $\alpha, \beta$, the corresponding monomial quasisymmetric functions multiply as
\[
M_\alpha \cdot M_\beta = \sum_\gamma c_{\alpha, \beta}^\gamma M_\gamma,
\]	
where $c_{\alpha, \beta}^\gamma$ denotes the multiplicity of the composition $\gamma$ in the overlapping shuffle product $\alpha \shuffle_o \beta$.
\end{proposition}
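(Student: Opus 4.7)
The plan is to prove this by a direct expansion of the product, carefully bookkeeping how the terms of $M_\alpha$ and $M_\beta$ combine. Starting from the definitions, we have
\[
M_\alpha \cdot M_\beta \;=\; \sum_{1 \leq i_1 < \cdots < i_m} \; \sum_{1 \leq j_1 < \cdots < j_n} x_{i_1}^{\alpha_1} \cdots x_{i_m}^{\alpha_m}\, x_{j_1}^{\beta_1} \cdots x_{j_n}^{\beta_n},
\]
where $m = \ell(\alpha)$ and $n = \ell(\beta)$. Each summand is a monomial whose support is the union $\{i_1,\ldots,i_m\} \cup \{j_1,\ldots,j_n\}$; this union has some cardinality $k$ with $\max(m,n) \leq k \leq m+n$. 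The strategy is to reparametrize the double sum by (i) the shape of how the two increasing sequences interleave and overlap, and (ii) the common strictly increasing sequence of actual subscripts that appear in the resulting monomial.

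To make (i) precise, note that from any pair of strictly increasing sequences $(i_1,\ldots,i_m)$ and $(j_1,\ldots,j_n)$ with union $\{l_1 < \cdots < l_k\}$, we canonically extract a surjection $\tau \colon [m+n] \to [k]$ by declaring $\tau(s) = r$ whenever $i_s = l_r$ (for $s \leq m$) and $\tau(m+t) = r$ whenever $j_t = l_r$. Since the $i$'s and the $j$'s are each strictly increasing, $\tau$ is strictly increasing on $[m]$ and on $[m+1, m+n]$ separately, so $\tau$ is an overlapping shuffle of $(m,n)$. Conversely, given any overlapping shuffle $\tau$ and any strictly increasing sequence $l_1 < \cdots < l_k$, we recover $(i_s) = (l_{\tau(s)})_{s \leq m}$ and $(j_t) = (l_{\tau(m+t)})_{t \leq n}$, yielding a bijection between pairs of increasing sequences and pairs $(\tau, (l_1 < \cdots < l_k))$.

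Under this bijection, the monomial contributed by the pair becomes $x_{l_1}^{\gamma^\tau_1} \cdots x_{l_k}^{\gamma^\tau_k}$, where $\gamma^\tau$ is precisely the composition defined in Equation~\eqref{eq:gamma}: the exponent on $x_{l_r}$ collects one contribution $\alpha_s$ for each $s \leq m$ with $\tau(s)=r$, and one contribution $\beta_t$ for each $t$ with $\tau(m+t)=r$. Therefore
\[
M_\alpha \cdot M_\beta \;=\; \sum_{\tau} \;\sum_{1 \leq l_1 < \cdots < l_k} x_{l_1}^{\gamma^\tau_1} \cdots x_{l_k}^{\gamma^\tau_k} \;=\; \sum_{\tau} M_{\gamma^\tau},
\]
where the outer sum runs over all overlapping shuffles $\tau$ of $(m,n)$. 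Collecting like terms by grouping overlapping shuffles according to the composition $\gamma^\tau$ they produce, and recalling that $c_{\alpha,\beta}^\gamma$ is by definition the number of $\tau$ with $\gamma^\tau = \gamma$, we obtain $M_\alpha \cdot M_\beta = \sum_\gamma c_{\alpha,\beta}^\gamma M_\gamma$, as claimed.

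The only real content is the bijection in the middle paragraph, which is a matter of careful bookkeeping rather than a substantive obstacle; the main thing to verify is that the two conditions defining an overlapping shuffle (strict monotonicity on $[m]$ and on $[m+1,m+n]$) exactly match what arises from two strictly increasing sequences, and that the "at most two summands" feature of Equation~\eqref{eq:gamma} reflects the fact that any given subscript $l_r$ can receive at most one exponent from $\alpha$ and at most one from $\beta$ (because each of the original sequences is strictly increasing).
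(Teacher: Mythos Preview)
Your proof is correct and follows exactly the approach the paper sketches: expand both $M_\alpha$ and $M_\beta$ monomially, multiply term by term, and reparametrize each pair of increasing index sequences by the overlapping shuffle $\tau$ recording their interleaving pattern together with the common support $l_1<\cdots<l_k$. The paper's proof sketch is a two-sentence version of precisely this argument; you have supplied the bookkeeping details it omits.
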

\begin{proof}[Proof (sketch)]
	Consider the monomial expansions of $M_\alpha$ and $M_\beta$ from their definitions. Now multiply, distributing term by term. Each monomial appearing from this product corresponds to a $\gamma$ from the sum on the right. It is not hard to see that the multiplicities are also correct.
\end{proof}

\begin{example}
	From \autoref{ex:overlapping_shuffle} and \autoref{prop:M-LR}, we have that
	\[
	M_{(3)} \cdot M_{(1,3)} = M_{(3,1,3)} + 2M_{(1,3,3)} + M_{(4,3)} + M_{(1,6)}.
	\]
	The reader may enjoy checking this computation directly after restricting to a small number of variables.
\end{example}

Finally, we can give a positive combinatorial rule for the structure coefficients of $H^*(J(BU(k)))$.
We write $s_\lambda^{(j)}$ for the Schur polynomial $s_\lambda$ in the variables $x_1^{(j)}, \dots, x_k^{(j)}$. Then \autoref{thm:cohomology-of-James-reduced-product-in-general} says that $J(BU(k))$ has cells indexed by tuples $\Lambda = (\lambda_1, \dots, \lambda_m)$ of nonzero partitions; moreover, the corresponding cellular cohomology class $x_\Lambda$ is represented by the power series
\[
M_\Lambda = \sum_{1\leq j_1 < j_2 < \dots < j_m} s_{\lambda_1}^{(j_1)} s_{\lambda_2}^{(j_2)} \cdots s_{\lambda_k}^{(j_m)}.
\]
Combining Littlewood--Richardson rule \autoref{prop:LR} with the Hazewinkel rule  \autoref{prop:M-LR}, we can compute the structure coefficients $c_{\Lambda,M}^N$ of $H^*(J(BU(k)))$ in positive combinatorial fashion. 

\begin{theorem}\label{thm:BUk}
	Let $\Lambda = (\lambda_1, \dots, \lambda_\ell)$, $M = (\mu_1, \dots, \mu_m)$, and $N = (\nu_1, \dots, \nu_n)$ be tuples of nonempty partitions, each partition of length $k$. Then the structure coefficient $c_{\Lambda,M}^N$ of $H^*(J(BU(k)))$ equals the sum over order preserving injections $\iota : [\ell] \to [\nu]$ and $\jmath : [\mu] \to [\nu]$ of the number of $n$-tuples $(T_1, \dots, T_n)$ of ballot semistandard tableaux, where $T_i$ has shape $\nu_i / \lambda_{\iota^{-1}(i)}$ and content $\mu_{\jmath^{-1}(i)}$. (Here, we take the convention that $\lambda_{\iota^{-1}(i)}$ is the empty partition if $i \notin \im \iota$, and similarly for $\mu_{\jmath^{-1}(i)}$.)
\end{theorem}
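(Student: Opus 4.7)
The plan is to combine Hazewinkel's overlapping shuffle rule (\autoref{prop:M-LR}) with the classical Littlewood--Richardson rule (\autoref{prop:LR}), using \autoref{thm:cohomology-of-James-reduced-product-in-general} to identify $x_\Lambda$ with $M_\Lambda$ in $\sQSym(H^*(BU(k);\Z))$. More precisely, I would compute $M_\Lambda \cdot M_M$ directly by distributing the defining sums, expand each product of Schur polynomials by Littlewood--Richardson, and regroup the resulting terms as a sum of the basis elements $M_N$.

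First I would expand
\[
M_\Lambda \cdot M_M \;=\; \sum_{\substack{j_1 < \cdots < j_\ell \\ j'_1 < \cdots < j'_m}} s_{\lambda_1}^{(j_1)} \cdots s_{\lambda_\ell}^{(j_\ell)} \cdot s_{\mu_1}^{(j'_1)} \cdots s_{\mu_m}^{(j'_m)}.
\]
Given such a pair of strictly increasing sequences, let $n$ be the cardinality of their (set-theoretic) union and list the union as $J_1 < \cdots < J_n$. This data is equivalent to the pair of order-preserving injections $\iota \colon [\ell] \to [n]$ and $\jmath \colon [m] \to [n]$ defined by $J_{\iota(a)} = j_a$ and $J_{\jmath(b)} = j'_b$, together with the requirement $\im\iota \cup \im\jmath = [n]$ (so that every $J_i$ actually appears). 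At each position $i \in [n]$ the corresponding monomial contribution is the product $s_{\lambda_{\iota^{-1}(i)}}^{(J_i)} \cdot s_{\mu_{\jmath^{-1}(i)}}^{(J_i)}$, with the convention that a missing factor is the empty Schur polynomial $s_\emptyset = 1$.

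Next I would apply the Littlewood--Richardson rule at each index $i$ independently, which, since the variables $x_r^{(J_i)}$ for different $i$ commute and are algebraically independent in the relevant tensor factor, yields
\[
\prod_{i=1}^n s_{\lambda_{\iota^{-1}(i)}}^{(J_i)} s_{\mu_{\jmath^{-1}(i)}}^{(J_i)} \;=\; \sum_{(\nu_1,\dots,\nu_n)} \Bigl(\prod_{i=1}^n c_{\lambda_{\iota^{-1}(i)},\,\mu_{\jmath^{-1}(i)}}^{\nu_i}\Bigr) \prod_{i=1}^n s_{\nu_i}^{(J_i)}.
\]
Summing first over $J_1 < \cdots < J_n$ collapses $\prod_i s_{\nu_i}^{(J_i)}$ into $M_N$ for $N=(\nu_1,\dots,\nu_n)$. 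By \autoref{prop:LR}, each factor $c_{\lambda_{\iota^{-1}(i)},\mu_{\jmath^{-1}(i)}}^{\nu_i}$ counts ballot semistandard tableaux of the prescribed shape and content, so the product counts $n$-tuples $(T_1,\dots,T_n)$ as in the theorem. It remains only to observe that positions $i \notin \im\iota \cup \im\jmath$ force a ballot tableau of shape $\nu_i$ with empty content, which is impossible when $\nu_i$ is nonempty; hence restricting to $N$ with all $\nu_i$ nonempty automatically imposes $\im\iota \cup \im\jmath = [n]$, recovering the overlapping-shuffle condition implicit in the statement.

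The proof is essentially a bookkeeping exercise; the main subtlety to watch is the edge-case handling of empty partitions, both in the Littlewood--Richardson factors (where $c_{\emptyset,\mu}^\nu = \delta_{\mu,\nu}$ via the unique ballot tableau of shape $\mu$ with content $\mu$) and in ensuring the stated sum over pairs $(\iota,\jmath)$ matches the overlapping shuffles of $(\ell,m)$ underlying Hazewinkel's rule. Once this is verified, the displayed identity matches \autoref{prop:M-LR} in the case $k=1$ and upgrades it to arbitrary $k$ by interpreting each ``letter'' of a composition as a partition.
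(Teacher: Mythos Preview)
Your proposal is correct and follows exactly the approach the paper takes: the paper's proof reads, in its entirety, ``This is straightforward by combining \autoref{prop:LR} and \autoref{prop:M-LR}.'' You have simply written out the bookkeeping that the paper leaves implicit, including the helpful observation that the surjectivity condition $\im\iota \cup \im\jmath = [n]$ from the overlapping shuffle is automatically enforced by the requirement that each $\nu_i$ be nonempty.
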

\begin{proof}
	This is straightforward by combining \autoref{prop:LR} and \autoref{prop:M-LR}.
\end{proof}

\begin{example}
Set $k=3$, let $\Lambda = ((1,0,0),(2,1,0))$, and let $M = ((2,1,0))$. Then the coefficient $c_{\Lambda,M}^N$ of the class $x_N \in H^*(J(BU(3)))$ for $N = ((2,1,0),(1,0,0),(2,1,0))$ in the product $x_\Lambda \cdot x_M$ is $1$, witnessed by the injections $\iota : [2] \to [3]$ and $\jmath : [1] \to [3]$ given by $\iota(1) = 2$, $\iota(2) = 3$, and $\jmath(1) = 1$, together with the $3$-tuple of tableaux
\[
\left( 
\ytableaushort{1 1, 2}, \varnothing, \varnothing
\right),
\]
where $\varnothing$ denotes the empty tableau.

Similarly, for $N' = ((1,0,0), (2,1,0), (2,1,0))$, the coefficient $c_{\Lambda,M}^{N'}$ is easily computed to be $2$, witnessed by the  $3$-tuples of tableaux
\[
\left( 
\varnothing,
\ytableaushort{1 1, 2}, \varnothing
\right)
\quad \text{and} \quad
\left( 
\varnothing, \varnothing,
\ytableaushort{1 1, 2}
\right).
\]

A more interesting calculation is the coefficient $c_{\Lambda,\Lambda}^{N''}$ for $N'' = ((1,0,0),(1,0,0),(3,2,1))$. Here, we have 
\[
c_{\Lambda,\Lambda}^{N''} = 4,
\]
witnessed by the four $3$-tuples
\[
\left( 
\varnothing,
\ytableaushort{1}, \ytableaushort{\blank \blank 1, \blank 1, 2}
\right),
\left( 
\varnothing,
\ytableaushort{1}, \ytableaushort{\blank \blank 1, \blank 2, 1}
\right),
\left( 
\ytableaushort{1}, \varnothing, \ytableaushort{\blank \blank 1, \blank 1, 2}
\right),
\quad \text{and} \quad
\left( 
\ytableaushort{1}, \varnothing, \ytableaushort{\blank \blank 1, \blank 2, 1}
\right).
\]
\end{example}

In the same fashion, an analogous characterization of the structure coefficients of $H^*(J(X))$ can be given in any case with a correspondingly explicit combinatorial description of $H^*(X)$.

\section*{Acknowledgments}
It is a pleasure to thank Dori Bejleri, Dan Berwick-Evans, Ajneet Dhillon, Dan Edidin, Matthias Franz, Doug Park, Jenna Rajchgot, Andrew Staal, Arnav Tripathy, Jeremy Usatine, Ben Webster, and Yehao Zhou for helpful email exchanges. 
We also thank Nantel Bergeron, Anders Buch, Lucas Gagnon, Allen Knutson, Thomas Lam, Andrei Okounkov, David Speyer, and Vasu Tewari for helpful conversations and encouragement.

\bibliographystyle{amsalpha}
\bibliography{quasi-symm}

\end{document}